\renewcommand{\phi}{\varphi}
\renewcommand{\rho}{\varrho}
\renewcommand{\epsilon}{\varepsilon}
\tikzset{commutative diagrams/.cd}
\newtheorem{Def}{Definition}[section]
\newenvironment{definition}{\begin{Def} \rm}{\end{Def}}
\newtheorem{lemma}[Def]{Lemma}
\newtheorem{proposition}[Def]{Proposition}
\newtheorem{theorem}[Def]{Theorem}
\newtheorem{example}[Def]{Example}
\newtheorem{remark}[Def]{Remark}
\newcommand{\komma}{,\hspace{0.3em}}
\newcommand{\id}{\text{\rm id}}
\renewcommand{\leq}{\leqslant}
\renewcommand{\geq}{\geqslant}
\newcommand{\Reals}{{\mathbb R}}
\newcommand{\Complexes}{{\mathbb C}}
\newcommand{\notperp}{\mathbin{\not\perp}}
\newcommand{\perpe}[1]{\mathbin{\perp_{#1}}}
\newcommand{\Zero}{{\bf 0}}
\renewcommand{\c}{^\perp}
\newcommand{\cc}{^{\perp\perp}}
\newcommand{\ce}[1]{^{\perp_{#1}}}
\newcommand{\cce}[1]{^{\perp_{#1}\perp_{#1}}}
\newcommand{\lin}[1]{\langle #1\rangle}
\renewcommand{\P}{\mathbf P}
\newcommand{\prl}{\parallel}
\newcommand{\spec}{\preccurlyeq}
\newcommand{\class}[1]{[ #1 ]}
\newcommand{\zerokernel}[1]{{#1}^{\circ}}
\newcommand{\withoutzero}{^{\raisebox{0.2ex}{\scalebox{0.4}{$\bullet$}}}}
\newcommand{\adj}{^\star}
\newcommand{\adjadj}{^{\star\star}}
\newcommand{\C}{{\mathcal C}}
\newcommand{\cOL}{\mathcal{cOL}}
\newcommand{\cOML}{\mathcal{cOML}}
\newcommand{\caOL}{\mathcal{caOL}}
\newcommand{\OS}{\mathcal{OS}}
\newcommand{\supOS}{^\text{\rm OS}}
\newcommand{\iOS}{\mathcal{iOS}}
\newcommand{\FOS}{\mathcal{FOS}}
\newcommand{\iDS}{\mathcal{iDS}}
\newcommand{\Hil}[1]{\mathcal{Hil}_{#1}}
\newcommand{\PHil}[1]{\mathcal{PHil}_{#1}}
\newcommand{\op}{^\text{op}}
\newcommand{\principalideal}[1]{{#1\hspace{-3pt}\downarrow}}
\DeclareMathOperator{\kernel}{ker}
\DeclareMathOperator{\image}{im}
\DeclareMathOperator{\homset}{hom}
\DeclareMathOperator{\card}{card}
\newcommand{\One}{{\mathbf 1}}
\begin{document}

\title{Categories of orthosets and adjointable maps}

\author[1]{Jan Paseka}

\author[2]{Thomas Vetterlein}

\affil[1]{\footnotesize Department of Mathematics and Statistics,
Masaryk University \authorcr
Kotl\'a\v rsk\'a 2, 611\,37 Brno, Czech Republic \authorcr
{\tt paseka@math.muni.cz}}

\affil[2]{\footnotesize Institute for Mathematical Methods in Medicine and Data Based Modeling, \authorcr
Johannes Kepler University Linz \authorcr
Altenberger Stra\ss{}e 69, 4040 Linz, Austria \authorcr
{\tt Thomas.Vetterlein@jku.at}}

\date{\today}

\maketitle

\begin{abstract}\parindent0pt\parskip1ex

\noindent An orthoset is a non-empty set together with a symmetric and irreflexive binary relation $\perp$, called the orthogonality relation. An orthoset with $0$ is an orthoset augmented with an additional element $0$, called falsity, which is orthogonal to every element. The collection of subspaces of a Hilbert space that are spanned by a single vector provides a motivating example.

We say that a map $f \colon X \to Y$ between orthosets with $0$ possesses the adjoint $g \colon Y \to X$ if, for any $x \in X$ and $y \in Y$, $f(x) \perp y$ if and only if $x \perp g(y)$. We call $f$ in this case adjointable. For instance, any bounded linear map between Hilbert spaces induces a map with this property. We discuss in this paper adjointability from several perspectives and we put a particular focus on maps preserving the orthogonality relation.

We moreover investigate the category $\OS$ of all orthosets with $0$ and adjointable maps between them. We especially focus on the full subcategory $\iOS$ of irredundant orthosets with~$0$. $\iOS$ can be made into a dagger category, the dagger of a morphism being its unique adjoint. $\iOS$ contains dagger subcategories of various sorts and provides in particular a framework for the investigation of Hilbert spaces.

{\it Keywords:} Orthoset; orthogonality space; Hermitian space; Hilbert space; dagger category

{\it MSC:} 81P10; 06C15; 46C05

\mbox{}\vspace{-2ex}

\end{abstract}

\section{Introduction}
\label{sec:Introduction}

Orthogonality is a concept omnipresent in mathematics. To explain its significance is nevertheless not an easy issue. For sure, however, we can say that numerous structures commonly used in linear algebra, geometry, or mathematical physics can be built exclusively on this very notion. Particularly remarkably, a Hilbert space, serving as the basic model of quantum physics, can in a certain sense be reduced to its orthogonality relation.

An {\it orthoset}, also called an {\it orthogonality space}, is a non-empty set $X$ equipped with a symmetric, irreflexive binary relation $\perp$, referred to as an orthogonality relation \cite{Dac,Mac,Hav,Wlc,DiNg}. The notion originates from the logico-algebraic approach to the foundation of quantum mechanics. It was once coined by David Foulis and his collaborators, the guiding example being the collection of one-dimensional subspaces of a Hilbert space together with the usual orthogonality relation \cite{Dac}. Indeed, this simple structure is of considerable significance: the orthoset arising from a Hilbert space $H$ leads directly to the ortholattice of its closed subspaces, which in turn is known to allow the reconstruction of $H$.

In spite of the conceptual simplicity, it is not straightforward to decide how to organise orthosets into a category. Orthosets can be identified with undirected graphs and in this context several possibilities have been investigated, see, e.g., \cite{Wal,Pfa}. It seems natural to require a morphism to preserve orthogonality \cite{PaVe1,PaVe2}. It has turned out, however, that this idea is of limited use when the focus is on connections with inner-product spaces. The present work reconsiders the issue and is based on another idea: we suppose morphisms to possess adjoints.

To begin with, we use in this paper a slightly adjusted version of the main notion. An {\it orthoset with $0$} is defined similarly to an orthoset, but comes with an additional element $0$ that is orthogonal to all elements. On the one hand, this harmless modification helps to avoid technical complications. On the other hand, our guiding example is essentially the same as before: rather than considering the collection of one-dimensional subspaces, we consider the collection of subspaces spanned by single vectors. Subsequently, we shall refer to orthosets with $0$ simply as ``orthosets''.

Let $f \colon X \to Y$ be a map between orthosets (with $0$). We call a further map $g \colon Y \to X$ an {\it adjoint} of $f$ if, for any $x \in X$ and $y \in Y$, $f(x) \perp g$ if and only if $x \perp g(y)$. To express that $f$ possesses an adjoint, we say that $f$ is {\it adjointable}. We may say that adjointable maps generalise orthogonality-preserving ones. For, a bijective map $f \colon X \to Y$ has the adjoint $f^{-1}$ if and only if $f$ preserves and reflects the orthogonality relation. In the context of inner-product spaces, however, adjointability comes closer to linearity than to unitarity. Indeed, any linear map between finite-dimensional Hilbert spaces induces an adjointable map between the associated orthosets. In fact, so does any bounded linear map between arbitrary Hilbert spaces.

We investigate in this paper, first, the basic facts around adjointable maps. For instance, viewing orthosets as closure spaces, we see that adjointable maps are continuous. A particular focus is moreover on maps that preserve, in a possibly restricted sense, the orthogonality relation. For instance, our framework is well suited to deal with partial orthometries, a notion defined by analogy with partial isometries between Hilbert spaces. Finally, we observe that we are naturally led to Dacey spaces in the present context. Namely, assume that $X$ is an orthoset such that the inclusion map of any of its subspaces to $X$ is adjointable. Then ${\mathsf C}(X)$, the ortholattice of subspaces of $X$, is orthomodular, which means that $X$ is Dacey. If, in addition, $X$ is atomistic, ${\mathsf C}(X)$ is an atomistic lattice with the covering property.

We turn, second, to the issue of finding a suitable category of orthosets. Needless to say, we consider categories whose objects are orthosets and whose morphisms are adjointable maps between them. We start by considering the category $\OS$ of all orthosets and the adjointable maps between them. We characterise the monos and epis in $\OS$ and point out that equalisers exist only in particular cases. Furthermore, we consider the category $\iOS$ of irredundant orthosets and adjointable maps. An orthoset is irredundant if two elements have the same orthocomplement only in case when they are equal. As adjoints are in this case unique, $\iOS$ is actually a dagger category \cite{AbCo,Sel}. We note that Jacobs studied in \cite{Jac} a category of orthomodular lattices and it turns out that our approach is closely related to his; a detailed discussion can be found in \cite{BPL}. Here, we show that there is a faithful and dagger essentially surjective dagger-preserving functor from $\iOS$ to the dagger category of complete ortholattices, which restricts to a functor from the dagger category of irredundant Dacey spaces to the dagger category of complete orthomodular lattices.

A follow-up paper will be devoted to an issue that naturally arises in the present context. Indeed, an obvious question is how a category consisting of Hilbert spaces, or more general Hermitian spaces, can be described in the present framework by putting suitable conditions on $\OS$.

Our paper is structured as follows. In the subsequent Section~\ref{sec:Orthosets-with-0}, we provide basic definitions and information on orthosets, and especially on Dacey spaces. Section~\ref{sec:Adjointable-maps} is devoted to the concept of adjointability of maps between orthosets. Section~\ref{sec:Orthometric-correspondences} explains in which sense the concept of adjointability can be used to describe orthogonality-preserving maps. In the final two sections, we turn to category theory. We discuss in Sections~\ref{sec:OS} and~\ref{sec:iOS} the categories $\OS$ and $\iOS$ of all orthosets and of the irredundant orthosets, respectively.

\section{Orthosets with $\mathbf 0$}
\label{sec:Orthosets-with-0}

The basic model used in quantum physics is the complex Hilbert space. Following the suggestion of D.\ Foulis, we may discard all its structure and keep solely its orthogonality relation, to be led to a minimalist model. An {\it orthoset}, also known as an {\it orthogonality space}, is based on a binary relation that is assumed to be symmetric and irreflexive \cite{Dac}. There is no reference to any field, linear operation, topology, or whatever. For an introduction to orthosets, we refer the reader to \cite{Wlc}. The question which properties characterise the orthosets of one-dimensional subspaces of inner-product spaces is discussed, e.g., in \cite{Vet1,Vet2,Rum,PaVe3}.

The present section contains an introduction to the key aspects of orthosets. We recall that orthosets have the natural structure of a closure space \cite{Ern}. In particular, the collection of orthoclosed subspaces forms a complete ortholattice. Moreover, orthosets may be distinguished with regard to separation properties, such as being irredundant or Fr\' echet. Finally, the orthosets arising from Hilbert spaces are so-called Dacey spaces, which we briefly discuss as well.

Our subsequent definition of an orthoset differs from Foulis's original one. Namely, we consider orthosets that are augmented with an additional element denoted by $0$, which is supposed to be orthogonal to all elements.

\begin{definition}
An {\it orthoset with $0$} is a non-empty set $X$ equipped with a binary relation $\perp$ called the {\it orthogonality relation} and with a constant $0$ called {\it falsity}. Moreover, the following is assumed:
\begin{itemize}

\item[\rm (O1)] $x \perp y$ implies $y \perp x$ for any $x, y \in X$,

\item[\rm (O2)] $x \perp x$ if and only if $x = 0$, 

\item[\rm (O3)] $0 \perp x$ for any $x \in X$.

\end{itemize}
An element of $X$ distinct from falsity is called {\it proper}; we put $X\withoutzero = X \setminus \{0\}$.

Elements $x, y \in X$ such that $x \perp y$ are called {\it orthogonal}. By a {\it $\perp$-set}, we mean a subset of an orthoset $X$ consisting of mutually orthogonal proper elements. The supremum of the cardinalities of $\perp$-sets in $X$ is called the {\it rank} of $X$.
\end{definition}

To simplify matters, we will drop in the sequel the attribute ``with $0$''. Throughout this paper, an orthoset will generally be meant to be an orthoset with $0$.

Informally, an orthoset might be thought of encoding maximal consistent collections of properties of a physical system. Orthogonality then corresponds to mutual exclusion and falsity stands for contradiction. More specifically, what we have in mind is the collection of pure states of a quantum-mechanical system, together with a further entity encoding impossibility. Our guiding example is the following.

\begin{example} \label{ex:basic-1}
Let $H$ be a real or complex Hilbert space. Then $H$, equipped with the usual orthogonality relation and with the zero vector as falsity, is an orthoset. Note that the (Hilbert) dimension of $H$ is the cardinality of any maximal $\perp$-set and hence coincides with the rank of $H$.

We get a modified version of this example by switching to the projective structure. This is what we actually consider as the guiding example of an orthoset. Let us denote the subspace spanned by a vector $u \in H$ by $\lin u$ and define
\[ \P(H) \;=\; \{ \lin u \colon u \in H \}. \]
That is, $\P(H)$ is the set of all subspaces of $H$ spanned by a single vector, including the zero vector. Defining the orthogonality relation again in the usual way and choosing the zero linear subspace $\lin 0 = \{0\}$ as falsity, we make $\P(H)$ into an orthoset. Obviously, the rank of \/ $\P(H)$ is equal to the rank, and hence the dimension, of $H$.

In what follows, we shall deal with an orthoset derived from $H$ slightly differently. For $u \in H$, let $\class u = \{ \alpha \, u \colon \alpha \neq 0 \}$ and define
\begin{equation} \label{fml:PH}
P(H) \;=\; \{ \class u \colon u \in H \}.
\end{equation}
As before we equip $P(H)$ with the usual orthogonality relation and the constant $\class 0 = \{0\}$. We observe that $P(H)$ arises from $\P(H)$ simply by removing the $0$ vector from all non-zero subspaces. Thus $P(H)$ can be identified with $\P(H)$ in an obvious way.
\end{example}

The notion of an orthoset leads directly to the realm of lattice theory and we will shortly mention the basic facts. Recall that an {\it orthoposet} is a bounded poset that is additionally equipped with an order-reversing involution $\c$ sending each element $a$ to a complement of $a$. For two elements $a$, $b$ of an orthoposet, we put $a \perp b$ if $a \leq b\c$. Moreover, an {\it ortholattice} is an orthoposet that is lattice-ordered.

The {\it orthocomplement} of a subset $A$ of an orthoset $X$ is
\[ A\c \;=\; \{ x \in X \colon x \perp y \text{ for all } y \in A \}. \]
The subsets $A$ of $X$ such that $A = A\cc$ are called {\it orthoclosed} and we denote by ${\mathsf C}(X)$ the collection of all orthoclosed subsets of $X$. Partially ordered by set-theoretic inclusion, ${\mathsf C}(X)$ is a complete lattice, $\{0\}$ being the smallest and $X$ the largest element. Additionally equipped with the orthocomplementation, ${\mathsf C}(X)$ becomes an ortholattice.

The restriction of the orthogonality relation to any subset $A$ of $X$ containing $0$ leads likewise to an orthoset, which we call a {\it suborthoset} of $X$. When dealing the same time with $A$ and $X$, we continue denoting the orthocomplementation on $X$ by $\c$, whereas we write $\ce{A}$ for the orthocomplementation on the suborthoset $A$. That is, we put $B\ce{A} = B\c \cap A$ for $B \subseteq A$.

Suborthosets arise in particular as components of decompositions, cf.\ \cite{PaVe2}. For $k \geq 1$, a {\it $k$-ary decomposition} of $X$ is a collection $(A_1, \ldots, A_k)$ of subsets of $X$ such that $A_i = \big(\bigcup_{j \neq i} A_j\big)\c$ for each $i$. Note that a subset $A$ of $X$ is orthoclosed if and only if $A$ is the constituent of some decomposition. In fact, $A$ is orthoclosed if and only if there is a further subset $B$ of $X$ such that $(A,B)$ is a binary decomposition. An orthoclosed set always contains the falsity element and is hence a suborthoset, which we call a {\it subspace} of $X$. In particular, we refer to $\{0\}$ as the {\it zero subspace} of $X$.

A contrasting situation is described in the following lemma. Under certain circumstances, the ortholattices associated with an orthoset and its suborthoset can be identified.

\begin{lemma} \label{lem:isomorphism-of-ortholattice-of-suborthoset}
Let $Y$ be a suborthoset of the orthoset $X$. Assume that, for any $x \in X\withoutzero$, there is a subset $A \subseteq Y$ such that $\{x\}\c = A\c$. Then the maps
\begin{align*}
& {\mathsf C}(X) \to {\mathsf C}(Y) \komma A \mapsto A \cap Y, \\
& {\mathsf C}(Y) \to {\mathsf C}(X) \komma A \mapsto A\cc
\end{align*}
are mutually inverse isomorphisms.
\end{lemma}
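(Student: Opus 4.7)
The crux of the argument is to promote the pointwise hypothesis to a statement about orthoclosed sets, namely that for every $A \in {\mathsf C}(X)$ one has $(A \cap Y)\c = A\c$. Granted this, the remaining verifications are routine applications of the defining identity $C\ce{Y} = C\c \cap Y$ for $C \subseteq Y$.

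First I would establish the claim $(A \cap Y)\c = A\c$ for $A \in {\mathsf C}(X)$. The inclusion $\supseteq$ is immediate from $A \cap Y \subseteq A$. For $\subseteq$, fix $z \in (A \cap Y)\c$ and a proper $a \in A$. By hypothesis, $\{a\}\c = D\c$ for some $D \subseteq Y$; hence $D \subseteq D\cc = \{a\}\cc \subseteq A\cc = A$, so $D \subseteq A \cap Y$. Then $z \in (A \cap Y)\c \subseteq D\c = \{a\}\c$, i.e., $z \perp a$; as $z \perp 0$ holds automatically, we conclude $z \in A\c$.

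Next I would deduce that the two assignments are well-defined and mutually inverse. For $A \in {\mathsf C}(X)$, two applications of the key claim (once to $A$, once to $A\c$) yield
\begin{align*}
(A \cap Y)\cce{Y} \;=\; \bigl((A \cap Y)\c \cap Y\bigr)\c \cap Y \;=\; (A\c \cap Y)\c \cap Y \;=\; A\cc \cap Y \;=\; A \cap Y,
\end{align*}
so $A \cap Y \in {\mathsf C}(Y)$; and $(A \cap Y)\cc = A\cc = A$, so the composition ${\mathsf C}(X) \to {\mathsf C}(Y) \to {\mathsf C}(X)$ is the identity. Conversely, for $B \in {\mathsf C}(Y)$, the inclusion $B \subseteq B\cc \cap Y$ is clear; if $y \in B\cc \cap Y$, then $y$ is orthogonal in $X$ to every element of $B\c$, hence in particular to $B\c \cap Y = B\ce{Y}$, so $y \in B\cce{Y} = B$. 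Both assignments are evidently order-preserving.

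Finally, compatibility with orthocomplementation follows from the key claim: the identity $A\c \cap Y = (A \cap Y)\c \cap Y = (A \cap Y)\ce{Y}$ shows that $A \mapsto A \cap Y$ intertwines $\c$ and $\ce{Y}$, and then the inverse map automatically does the same. The only substantive step is the key claim of the first paragraph; once it is in place, every remaining clause is a short bookkeeping calculation built from $C\ce{Y} = C\c \cap Y$.
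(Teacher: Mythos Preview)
Your proof is correct and follows essentially the same route as the paper: both arguments hinge on the identity $(A \cap Y)\c = A\c$ (equivalently $(A \cap Y)\cc = A$) for $A \in {\mathsf C}(X)$, and then perform the same bookkeeping with $C\ce{Y} = C\c \cap Y$ to verify well-definedness, mutual inversity, and compatibility with orthocomplementation. The only cosmetic difference is that the paper establishes the key identity via the join decomposition $A = \bigvee_{x \in A}\{x\}\cc$ in ${\mathsf C}(X)$, whereas you give a direct elementwise argument using $D \subseteq \{a\}\cc \subseteq A$; your version is slightly more self-contained since it avoids invoking the complete-lattice structure.
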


\begin{proof}
Note that $(\{x\}\cc \cap Y)\cc = \{x\}\cc$ for any $x \in X$. For $A \in {\mathsf C}(X)$, we hence have $A = \bigvee_{x \in A} \{x\}\cc = \bigvee_{x \in A} (\{x\}\cc \cap Y)\cc = \big(\bigcup_{x \in A} (\{x\}\cc \cap Y)\big)\cc = (A \cap Y)\cc$ and $A = (A\c \cap Y)\c$. In particular, $A \cap Y = (A\c \cap Y)\ce{Y} \in {\mathsf C}(Y)$. Moreover, for $A \in {\mathsf C}(Y)$, we have that $ A =  A\cce{Y} = (A\c \cap Y)\c \cap Y = A\cc \cap Y$. Hence the indicated maps are mutually inverse bijections. Clearly, both are order-preserving and we conclude that they establish an isomorphism of lattices. As $(A \cap Y)\ce{Y} = (A \cap Y)\c \cap Y = A\c \cap Y$ for any $A \in {\mathsf C}(X)$, this is an isomorphism of ortholattices.
\end{proof}

Orthosets give rise to ortholattices. We see next that, conversely, orthoposets lead to orthosets.

For the Dedekind-MacNeille completion of orthoposets, see, e.g., \cite{Mac,Pal}. For a subset $A$ of an orthoposet $L$ to be {\it join-dense}, we mean that any element of $L$ is the join of some (not necessarily finite) subset of $A$. For an element $a$ of $L$, we write $\principalideal a = \{ x \in L \colon x \leq a \}$.

\begin{proposition} \label{prop:orthosets-and-ortholattices}
Let $L$ be an orthoposet and let $X$ be a join-dense subset of $L$ containing the bottom element $0$. Then $X$, equipped with the orthogonality relation inherited from $L$ and with $0$, is an orthoset. Moreover, ${\mathsf C}(X)$ together with the injection
\[ \iota_L \colon L \to {\mathsf C}(X) \komma a \mapsto \{ x \in X \colon x \leq a \} \]
is the Dedekind-MacNeille completion of $L$. Finally,
\[ \iota_X \colon X \to {\mathsf C}(X) \komma x \mapsto \{x\}\cc \]
is an injection such that, for any $x, y \in X$, $x \perp y$ iff $\iota_X(x) \perp \iota_X(y)$, and $\iota_X(0) = \{0\}$.
\end{proposition}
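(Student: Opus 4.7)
The plan is to address the three claims of the proposition in turn. For the first, I would verify (O1)--(O3) directly from the orthoposet definition $x \perp y$ iff $x \leq y\c$. Symmetry is immediate from $\c$ being order-reversing and involutive. For (O2), if $x \perp x$ in $L$ then $x \leq x\c$, forcing $x = x \wedge x\c = 0$; conversely $0 \leq 0\c$ trivially. Finally (O3) is automatic since $0$ is the bottom of $L$.

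For the Dedekind--MacNeille claim, my key technical step would be to show $\iota_L(a)\c = \iota_L(a\c)$ for every $a \in L$. One inclusion is immediate. For the reverse, given $y \in \iota_L(a)\c$, I would invoke join-density to write $a = \bigvee S$ with $S \subseteq X$; each $s \in S$ lies in $\iota_L(a)$, so $y \perp s$ gives $s \leq y\c$, making $y\c$ an upper bound of $S$ and hence $a \leq y\c$, i.e.\ $y \leq a\c$. The identity $\iota_L(a)\c = \iota_L(a\c)$ then yields both $\iota_L(a) \in {\mathsf C}(X)$ and, in combination with $a = \bigvee \iota_L(a)$, that $\iota_L$ is an injective order-embedding (from $\iota_L(a) \subseteq \iota_L(b)$ I get $a \leq b$ by taking joins). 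Since ${\mathsf C}(X)$ is already known to be a complete lattice, I would then invoke the standard characterisation of the Dedekind--MacNeille completion by verifying both join- and meet-density of $\iota_L(L)$ in ${\mathsf C}(X)$: any $B \in {\mathsf C}(X)$ satisfies $B = \bigvee_{x \in B} \{x\}\cc = \bigvee_{x \in B} \iota_L(x)$, and dually $B = \bigwedge_{y \in B\c} \iota_L(y\c)$.

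For the third claim, I would first note that $\iota_X(x) = \iota_L(x)$ for every $x \in X$, since $\{x\}\c = \iota_L(x\c)$ by the previous step gives $\{x\}\cc = \iota_L(x\cc) = \iota_L(x)$. Consequently $x \in \iota_X(x)$, so injectivity of $\iota_X$ follows from antisymmetry of the order on $L$. That $\iota_X(0) = \{0\}$ follows from $\{0\}\c = X$ (by O3) and $X\c = \{0\}$ (by O2). For the orthogonality equivalence, the condition $\iota_X(x) \perp \iota_X(y)$ unfolds to $\{x\}\cc \subseteq \{y\}\c$, which is equivalent to $x \in \{y\}\c$ because $\{y\}\c$ is orthoclosed and $\{x\}\cc$ is the smallest orthoclosed set containing $x$; this is precisely $x \perp y$. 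The principal obstacle throughout is the single identity $\iota_L(a)\c = \iota_L(a\c)$, where join-density of $X$ in $L$ is crucially deployed to bridge the Galois-theoretic orthocomplement in $X$ with the algebraic orthocomplement in $L$; once that identity is in hand, the remaining assertions reduce to straightforward bookkeeping with Galois connections.
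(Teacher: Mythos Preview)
Your proposal is correct, but it takes a different route from the paper's proof. The paper proceeds in two stages: first it shows that for the full orthoposet $L$ (regarded as an orthoset) one has $A^{\uparrow\downarrow} = A\cc$ for every $A \subseteq L$, which immediately identifies ${\mathsf C}(L)$, together with $a \mapsto \principalideal a$, as the Dedekind--MacNeille completion of $L$; then it invokes Lemma~\ref{lem:isomorphism-of-ortholattice-of-suborthoset} (the suborthoset transfer lemma) to pass from ${\mathsf C}(L)$ to ${\mathsf C}(X)$ along the isomorphism $A \mapsto A \cap X$, whence $\iota_L(a) = \principalideal a \cap X$. By contrast, you work directly inside ${\mathsf C}(X)$ from the start, pivoting everything on the single identity $\iota_L(a)\c = \iota_L(a\c)$ and then checking the join/meet-density characterisation of the completion by hand. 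Your argument is more self-contained and avoids the detour through ${\mathsf C}(L)$ and the auxiliary lemma; the paper's version is more modular and incidentally yields the ortholattice isomorphism ${\mathsf C}(L) \cong {\mathsf C}(X)$ as a by-product. One small organisational point: you silently use $\{x\}\cc = \iota_L(x)$ in the join-density step of part~2 before deriving it in part~3; since this follows immediately from your key identity (via $\{x\}\c = \iota_L(x\c)$), you should state it as soon as that identity is established.
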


\begin{proof}
Equipped with $\perp$ and $0$, $L$ is evidently an orthoset. For $A \subseteq L$, let $A^\uparrow$ be the set of upper bounds of $A$ in $L$ and $A^\downarrow$ the set of lower bounds of $A$ in $L$. We have
\[ \begin{split} A^{\uparrow\downarrow}
& \;=\; \{ x \in L \colon x \leq y \text{ for any $y \in L$ such that } y \geq z \text{ for any } z \in A \} \\
& \;=\; \{ x \in L \colon x \perp y \text{ for any $y \in L$ such that } y \perp z \text{ for any } z \in A \}
\;=\; A\cc. \end{split} \]
For $a \in L$, we moreover have $\principalideal a = \{a\}\cc \in {\mathsf C}(L)$. We conclude that ${\mathsf C}(L)$, together with the injection $L \to {\mathsf C}(L) \komma a \mapsto \principalideal a$, is the Dedekind-MacNeille completion of the orthoposet $L$.

Let now $X$ be a join-dense subset of $L$. Equipped with $\perp$ and $0$, $X$ is a suborthoset of $L$. By Lemma~\ref{lem:isomorphism-of-ortholattice-of-suborthoset}, the map ${\mathsf C}(L) \to {\mathsf C}(X) \komma A \mapsto A \cap X$ is an isomorphism of ortholattices. Hence also ${\mathsf C}(X)$, together with the injection $\iota_L \colon L \to {\mathsf C}(X) \komma a \mapsto \principalideal a \cap X$, is the Dedekind-MacNeille completion of $L$.

Note finally that, for $x \in X$, we have by Lemma~\ref{lem:isomorphism-of-ortholattice-of-suborthoset} that $\iota_X(x) = \{x\}\cce{X} = (\{x\}\c \cap X)\c \cap X = \{x\}\cc \cap X = \principalideal x \cap X = \{ y \in X \colon y \leq x \}$. Hence $\iota_X$ is injective. The remaining assertions about $\iota_X$ are obvious.
\end{proof}

\begin{remark} \label{rem:orthosets-and-ortholattices}
Let $L$ be an orthoposet. Then we may view $L$ according to Proposition~\ref{prop:orthosets-and-ortholattices} as an orthoset. To avoid confusion, we will denote the latter occasionally by $L\supOS$.

The MacNeille completion of $L$ can then be identified with ${\mathsf C}(L\supOS)$. In the particular case that $L$ is complete, we have that $L$ itself can be identified with ${\mathsf C}(L\supOS)$. In this sense, we may say that a complete ortholattice can be reduced to its orthogonality relation.
\end{remark}

For any orthoset $X$, the double orthocomplementation $\cc$ is a closure operator on $X$. That is, $X$, equipped with $\cc$, is a closure space \cite[Section~2.1]{Ern} and the orthoclosed sets are precisely the subsets that are closed w.r.t.\ $\cc$. This point of view will be useful at some places in the sequel.

Separation properties have been considered for closure spaces, generalising the well-known notions for topological spaces \cite{Ern}. In our setting, the following conditions are relevant.

\begin{definition} \label{def:separation-properties}
Let $X$ be an orthoset.
\begin{itemize}

\item[\rm (i)] We call $X$ {\it irredundant} if, for any distinct proper elements $x, y \in X$, there is a $z \in X$ such that either $z \perp x$ but $z \notperp y$, or $z \perp y$ but $z \notperp x$.

\item[\rm (ii)] We call $X$ {\it atomistic} if, for any proper elements $x, y \in X$, the following holds: If there is a $z \in X$ such that $z \perp x$ but $z \notperp y$, then there is also an $z' \in X$ such that $z' \perp y$ but $z' \notperp x$.

\item[\rm (iii)] We call $X$ {\it Fr\' echet} if, for any distinct proper elements $x, y \in X$, there is a $z \in X$ such that $z \perp x$ but $z \notperp y$.

\end{itemize}
\end{definition}

An orthoset $X$, viewed as a closure space, carries the so-called {\it specialisation order} \cite{Ern}, which we denote by $\spec$. That is, for $x, y \in X$ we put $x \spec y$ if $\{x\}\cc \subseteq \{y\}\cc$. Evidently, $\spec$ is a preorder. Moreover, let us call two elements $x, y \in X$ {\it equivalent} if $\{x\}\c = \{y\}\c$; we write $x \prl y$ in this case. Then $x \prl y$ if and only if $x \spec y$ and $y \spec x$. Clearly, $\prl$ is an equivalence relation and we denote the equivalence class of $x \in X$ by $\class x$.

\begin{lemma} \label{lem:irredundant}
For an orthoset $X$, the following are equivalent.
\begin{itemize}

\item[\rm (a)] $X$ is irredundant.

\item[\rm (b)] For any $x, y \in X\withoutzero$, $\{x\}\c = \{y\}\c$ implies $x = y$.

\item[\rm (c)] $\spec$ is antisymmetric, that is, a partial order.

\item[\rm (d)] $\class x = \{x\}$ for any $x \in X$, that is, $\prl$ is equality.

\end{itemize}
\end{lemma}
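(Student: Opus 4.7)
The plan is to establish these equivalences by pure unpacking of the definitions; no deeper structural input is needed. I would split the argument as (a) $\Leftrightarrow$ (b), (c) $\Leftrightarrow$ (d), and (b) $\Leftrightarrow$ (d).

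First, (a) and (b) are mere contrapositives. Indeed, condition (a) asserts that for any distinct $x, y \in X\withoutzero$ there exists $z \in X$ witnessing $\{x\}\c \neq \{y\}\c$, which is precisely the negation of the hypothesis of (b).

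Second, for (c) $\Leftrightarrow$ (d), I would invoke the characterization of $\prl$ already recorded in the paragraph preceding the lemma: $x \prl y$ iff $x \spec y$ and $y \spec x$. Antisymmetry of $\spec$ therefore says exactly that $\prl$-equivalence forces equality, which is (d).

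Third, for (b) $\Leftrightarrow$ (d), the only subtlety concerns the falsity element, which is quantified in (d) but excluded in (b). By (O3) we have $\{0\}\c = X$; conversely, if $\{y\}\c = X$ then $y \perp y$, so $y = 0$ by (O2). Hence $\class 0 = \{0\}$ holds unconditionally, and no proper element can be $\prl$-equivalent to $0$. Consequently (d) reduces to the assertion that proper $x, y$ with $\{x\}\c = \{y\}\c$ coincide, which is exactly (b).

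I do not anticipate a genuine obstacle; the single point that deserves explicit attention is the asymmetry between the quantifications in (b) (over proper elements only) and in (d) (over all elements), which is neutralized by the preliminary observation that $\class 0 = \{0\}$ regardless of any hypothesis on $X$.
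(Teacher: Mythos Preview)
Your proposal is correct and is essentially the same approach as the paper's own proof, which simply reads ``Straightforward.'' Your write-up merely makes explicit the definition-unpacking that the paper leaves to the reader, including the one mildly nontrivial point that $\class 0 = \{0\}$ always holds.
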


\begin{proof}
Straightforward.
\end{proof}

In view of criterion (b) of Lemma~\ref{lem:irredundant}, we observe that the irredundancy of an orthoset corresponds to property of a closure space to be $T_0$; see, e.g., \cite{Ste}. 

With an orthoset $X$, we may associate in a canonical way an orthoset that is  irredundant and such that the associated ortholattices can be identified. We call
\[ P(X) \;=\; \{ \class x \colon x \in X \} \]
the {\it irredundant quotient} of $X$. Note, moreover, that for any $A \in {\mathsf C}(X)$ and $x \in A$ we have $\class x \subseteq A$. Hence it makes sense to define $P_X(A) = \{ \class x \colon x \in A \}$.

\begin{proposition} \label{prop:irredundant-quotient}
Let $X$ be an orthoset. On $P(X)$, we may define
\[ \class x \perp \class y \;\text{ if }\; x \perp y, \]
where $x, y \in X$. Endowed with $\perp$ and the constant $\class 0 = \{0\}$, $P(X)$ becomes an irredundant orthoset. We moreover have:
\begin{itemize}

\item[\rm (i)] The map ${\mathsf C}(X) \to {\mathsf C}(P(X)) \komma A \mapsto P_X(A)$ is an isomorphism of ortholattices.

\item[\rm (ii)] $(A_1, \ldots, A_k)$ is a decomposition of $X$ if and only if $(P_X(A_1), \ldots, P_X(A_k))$ is a decomposition of $P(X)$.

\end{itemize}
\end{proposition}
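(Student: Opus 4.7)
The plan is as follows. First, I will check that the orthogonality relation on $P(X)$ is well-defined: since $x \prl x'$ amounts to $\{x\}\c = \{x'\}\c$, the truth of $x \perp y$ depends only on $\class x$, and symmetrically only on $\class y$. The orthoset axioms transfer routinely from $X$: symmetry is inherited, $\class x \perp \class x$ iff $x \perp x$ iff $x = 0$ iff $\class x = \class 0$, and $\class 0 \perp \class x$ is immediate. Irredundancy of $P(X)$ then follows from Lemma~\ref{lem:irredundant}(b): if $\{\class x\}\c = \{\class y\}\c$ in $P(X)$, unfolding the definition gives $\{x\}\c = \{y\}\c$ in $X$, that is $x \prl y$, whence $\class x = \class y$.

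For (i) the crucial step will be the set-level identity $P_X(A\c) = P_X(A)\c$ for every $A \subseteq X$, which reads off the definitions: $\class y \in P_X(A\c)$ iff $y \perp x$ for every $x \in A$ iff $\class y \perp \class x$ for every $\class x \in P_X(A)$. Applied to $A \in {\mathsf C}(X)$, this yields $P_X(A) = P_X(A\cc) = P_X(A)\cc \in {\mathsf C}(P(X))$, so the proposed map is well defined. The inverse will be furnished by the preimage under the quotient map $\pi \colon X \to P(X)\komma x \mapsto \class x$. For $A \in {\mathsf C}(X)$, the saturation property noted just before the proposition ($\class x \subseteq A$ whenever $x \in A$) gives $\pi^{-1}(P_X(A)) = A$. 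For $B \in {\mathsf C}(P(X))$, the same key identity, read now in $P(X)$, gives $\pi^{-1}(B)\c = \pi^{-1}(B\c)$, whence $\pi^{-1}(B)\cc = \pi^{-1}(B\cc) = \pi^{-1}(B)$, so $\pi^{-1}(B) \in {\mathsf C}(X)$ and $P_X(\pi^{-1}(B)) = B$ by construction. Both maps are manifestly order-preserving, and orthocomplementation is preserved by the very same identity, so we obtain an isomorphism of ortholattices.

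Part (ii) will then fall out formally from (i) together with the obvious $P_X(\bigcup_i A_i) = \bigcup_i P_X(A_i)$. Since $P_X$ commutes with unions and with orthocomplementation, the equation $A_i = (\bigcup_{j \neq i} A_j)\c$ is preserved by the ortholattice isomorphism of (i) in both directions. The only genuinely substantive step in the entire proof is the identity $P_X(A\c) = P_X(A)\c$ at the level of arbitrary subsets; once that is in hand, all remaining assertions reduce to routine bookkeeping exploiting the saturation property of orthoclosed sets.
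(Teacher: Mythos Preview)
Your proof is correct and supplies precisely the details the paper omits: the paper's own proof of (i) and (ii) is literally ``easily checked'', and your key identity $P_X(A\c) = P_X(A)\c$ together with the inverse map $\pi^{-1}$ is the natural way to unpack this. One small remark: the paper defines $P_X$ only on orthoclosed subsets, whereas your key identity is stated for arbitrary $A \subseteq X$; this is a harmless and obvious extension (just the image under the quotient map $\pi$), but you might note it explicitly to match the paper's conventions.
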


\begin{proof}
Clearly, $P(X)$ can be made into an orthoset in the indicated way. Moreover, for any $x, y \in X$, we have $x \perp y$ in $X$ if and only if $\class x \perp \class y$ in $P(X)$. Hence $\{\class x\}\c = \{\class z\}\c$ in $P(X)$ implies $\{x\}\c = \{z\}\c$ in $X$, that is, $\class x = \class z$. This shows that $P(X)$ is irredundant.

The assertions (i) and (ii) are easily checked.
\end{proof}

In reverse perspective, we may say that any orthoset arises from an irredundant orthoset by a ``multiplication'' of its elements; cf.~\cite[Section 2]{Vet2}. It might hence seem that to describe $P(X)$ is essentially the same task as to describe $X$. However, this is true only when $X$ is considered in isolation; in a categorical context, the ``multiplicity'' of elements may well play an important role.

\begin{lemma} \label{lem:atomistic}
For an orthoset $X$, the following are equivalent:
\begin{itemize}

\item[\rm (a)] $X$ is atomistic.

\item[\rm (b)] For any $x, y \in X\withoutzero$, $\{x\}\c \subseteq \{y\}\c$ implies $\{x\}\c = \{y\}\c$.

\item[\rm (c)] $\spec$ coincides with $\prl$.

\item[\rm (d)] $\{x\}\cc = \class x \cup \{0\}$ for any $x \in X\withoutzero$.

\item[\rm (e)] For any $x \in X\withoutzero$, $\{x\}\cc$ is an atom of ${\mathsf C}(X)$.

\item[\rm (f)] $P(X)$ is Fr\' echet.

\end{itemize}
\end{lemma}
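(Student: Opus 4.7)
My plan is to use condition (b) as a hub and prove it equivalent to each of the other five; (b) is stated purely in terms of orthocomplements of proper elements and is the most convenient intermediate formulation.

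First, (a) $\Leftrightarrow$ (b) by direct unfolding. The defining implication of atomicity, stating that whenever some $z$ satisfies $z \perp x$ but $z \notperp y$ there exists $z'$ with $z' \perp y$ but $z' \notperp x$, asserts exactly that $\{x\}\c \not\subseteq \{y\}\c$ implies $\{y\}\c \not\subseteq \{x\}\c$ for proper $x, y$; contraposing and relabelling yields (b). For (b) $\Leftrightarrow$ (c) I would observe that $y \spec x$ unfolds by definition to $\{x\}\c \subseteq \{y\}\c$, while $y \prl x$ means $\{x\}\c = \{y\}\c$, so that on proper elements (b) says precisely that $\spec$ forces $\prl$, the converse being automatic. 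The zero element receives the expected treatment: $0 \spec x$ always holds, so the identification of $\spec$ with $\prl$ is understood on $X\withoutzero$, just as in Lemma~\ref{lem:irredundant}.

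Next, (b) $\Leftrightarrow$ (d): the inclusion $\class x \cup \{0\} \subseteq \{x\}\cc$ is automatic, since each element lies in its own double orthocomplement and $0 \perp$ everything. For the reverse inclusion under (b), any proper $y \in \{x\}\cc$ satisfies $\{y\}\cc \subseteq \{x\}\cc$ (because $\{x\}\cc$ is orthoclosed and contains $y$), i.e.\ $\{x\}\c \subseteq \{y\}\c$, and (b) promotes this to equality, giving $y \in \class x$. Conversely, under (d), any proper $y$ with $\{x\}\c \subseteq \{y\}\c$ lies in $\{y\}\cc \subseteq \{x\}\cc = \class x \cup \{0\}$, hence in $\class x$, yielding (b). For (d) $\Leftrightarrow$ (e), under (d) any nonzero orthoclosed $A \subseteq \{x\}\cc$ contains some proper $y \in \class x$, so $\{x\}\cc = \{y\}\cc \subseteq A$; conversely, if $\{x\}\cc$ is an atom, each proper $y \in \{x\}\cc$ has $\{0\} \subsetneq \{y\}\cc \subseteq \{x\}\cc$, forcing $\{y\}\cc = \{x\}\cc$ and $y \in \class x$.

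Finally, (b) $\Leftrightarrow$ (f): two classes $\class x, \class y \in P(X)$ are distinct exactly when $\{x\}\c \neq \{y\}\c$; under (b) neither of $\{x\}\c, \{y\}\c$ can be contained in the other, and a witness $z$ of non-inclusion is automatically proper (since $0$ is orthogonal to everything), so its class $\class z$ provides the Fr\'echet witness in $P(X)$. Conversely, a failure of (b) -- say proper $x, y$ with $\{x\}\c \subsetneq \{y\}\c$ -- makes $\class x \neq \class y$ in $P(X)$, yet no $\class z$ with $\class z \perp \class x$ and $\class z \notperp \class y$ can exist, contradicting the Fr\'echet property. The only mildly delicate point in the whole proof is this last transit between $X$ and $P(X)$, where one has to keep straight how orthogonality and $\prl$-classes interact; every other implication is a routine unfolding of the relevant definitions together with the characterisation of $\spec$ and $\{\cdot\}\cc$ in terms of orthocomplements.
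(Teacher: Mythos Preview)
Your proof is correct and follows essentially the same approach as the paper: both take condition (b) as the pivot and reduce the other conditions to it by direct unfolding of the definitions of $\spec$, $\prl$, and $\{\cdot\}\cc$, with the paper organising (b), (d), (e) as a cycle rather than separate biconditionals. Your explicit remark about the zero element in condition (c) is a point the paper silently absorbs into ``obvious''.
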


\begin{proof}
The equivalence of (a), (b), and (c) is obvious.

(b) $\Rightarrow$ (d): Assume that (b) holds. Clearly, $\class x \cup \{0\} \subseteq \{x\}\cc $ for any $x \in X\withoutzero$. Moreover, $y \in \{x\}\cc \setminus \{0\}$ implies $\{y\}\cc \subseteq \{x\}\cc$ and hence $\{x\}\c = \{y\}\c$, that is, $y \in \class x$. We conclude that $\class x \cup \{0\} = \{x\}\cc$.

(d) $\Rightarrow$ (e): Assume that (d) holds. For any $x, y \in X\withoutzero$ such that $y \in \{x\}\cc$, we have $\{y\}\cc \subseteq \{x\}\cc$ and hence $\class y \subseteq \class x$, that is, $x \prl y$ and $\{y\}\cc = \{x\}\cc$. We conclude that $\{x\}\cc$ is an atom of ${\mathsf C}(X)$.

(e) $\Rightarrow$ (b): Assume that (e) holds. Then, for any $x, y \in X\withoutzero$, $\{x\}\c \subseteq \{y\}\c$ implies $\{0\} \neq \{y\}\cc \subseteq \{x\}\cc$ and hence  $\{x\}\c = \{y\}\c$.

(b) $\Leftrightarrow$ (f): For $x, y \in X\withoutzero$, we have that $\{\class x\}\c \subseteq \{\class y\}\c$ in $P(X)$ iff $\{x\}\c \subseteq \{y\}\c$ in $X$, and $\{\class x\}\c = \{\class y\}\c$ in $P(X)$ iff $\{x\}\c = \{y\}\c$ in $X$. We conclude that $X$ fulfils condition (b) if and only if so does $P(X)$. Moreover, $P(X)$ is by Proposition~\ref{prop:irredundant-quotient} in this case irredundant and hence Fr\' echet.
\end{proof}

As seen next, the atomisticity of an orthoset implies the equally denoted property of the associated ortholattice. Recall that a lattice with $0$ is called {\it atomistic} if the set of atoms is join-dense, that is, if every element is a join of atoms.

\begin{lemma} \label{lem:atomistic-orthoset-atomistic-ortholattice}
Let $X$ be an orthoset. If $X$ is atomistic, then so is ${\mathsf C}(X)$. In fact, ${\mathsf C}(X)$ is atomistic if and only if $X$ possesses an atomistic suborthoset $Y$ such that, for any $x \in X\withoutzero$, there is a subset $A \subseteq Y$ such that $\{x\}\c = A\c$. In this case, the atoms of ${\mathsf C}(X)$ are exactly the sets $\{y\}\cc$, $y \in Y\withoutzero$, and we have:
\begin{itemize}

\item[\rm (i)] The maps ${\mathsf C}(X) \to {\mathsf C}(Y) \komma A \mapsto A \cap Y$ and ${\mathsf C}(Y) \to {\mathsf C}(X) \komma A \mapsto A\cc$ are mutual inverse isomorphisms.

\item[\rm (ii)] $(A_1, \ldots, A_k)$ is a decomposition of $X$ if and only if $(A_1 \cap Y, \ldots, A_k \cap Y)$ is a decomposition of $Y$.

\end{itemize}
\end{lemma}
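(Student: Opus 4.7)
My plan is to establish the opening assertion first, then the non-trivial direction of the biconditional; parts (i) and (ii) will follow immediately from Lemma~\ref{lem:isomorphism-of-ortholattice-of-suborthoset} once the required density property of $Y$ is in place.

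For the opening assertion, suppose $X$ is atomistic. Lemma~\ref{lem:atomistic}(e) gives that $\{x\}\cc$ is an atom of ${\mathsf C}(X)$ for every $x \in X\withoutzero$, and for any $A \in {\mathsf C}(X)$ we have $A = A\cc = \big(\bigcup_{x \in A}\{x\}\big)\cc = \bigvee_{x \in A\withoutzero}\{x\}\cc$, so the atoms are join-dense. Applied in place of $X$ to a hypothesised atomistic suborthoset $Y$ and combined with the isomorphism from Lemma~\ref{lem:isomorphism-of-ortholattice-of-suborthoset}, this same argument yields the easy direction of the biconditional.

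For the converse direction, assume ${\mathsf C}(X)$ is atomistic and set
\[
Y \;=\; \{0\} \cup \{\, y \in X\withoutzero \colon \{y\}\cc \text{ is an atom of } {\mathsf C}(X) \,\}.
\]
I would first observe that every atom $B$ of ${\mathsf C}(X)$ has the form $\{y\}\cc$ for some $y \in Y\withoutzero$: any $y \in B\withoutzero$ gives a non-zero element $\{y\}\cc \subseteq B$, which must then coincide with $B$, so $y \in Y$. Next, for any $x \in X\withoutzero$, atomicity of ${\mathsf C}(X)$ writes $\{x\}\cc$ as a join of atoms, each of which is $\{y\}\cc$ with $y \in Y\withoutzero \cap \{x\}\cc$; hence $A = Y \cap \{x\}\cc \subseteq Y$ satisfies $\{x\}\cc = \big(\bigcup_{y \in A}\{y\}\big)\cc = A\cc$ and thus $\{x\}\c = A\c$. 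This is exactly the hypothesis of Lemma~\ref{lem:isomorphism-of-ortholattice-of-suborthoset}, which yields assertion (i) at once. To see that $Y$ is itself atomistic, take any $y \in Y\withoutzero$; the identity $\{y\}\cce{Y} = \{y\}\cc \cap Y$, which follows from the orthocomplement-preserving part of the proof of Lemma~\ref{lem:isomorphism-of-ortholattice-of-suborthoset}, shows that the isomorphism in (i) sends the atom $\{y\}\cc$ of ${\mathsf C}(X)$ to an atom of ${\mathsf C}(Y)$, so Lemma~\ref{lem:atomistic}(e) yields atomisticity. The same transport shows, for any suborthoset $Y$ fulfilling the given hypotheses, that the atoms of ${\mathsf C}(X)$ are precisely the sets $\{y\}\cc$ with $y \in Y\withoutzero$.

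Part (ii) will follow from (i) because the isomorphism $A \mapsto A \cap Y$ intertwines $\c$ with $\ce{Y}$, and being a decomposition is expressed by the equations $A_i = \big(\bigcup_{j \neq i} A_j\big)\c$, which are thereby preserved under the correspondence. The main obstacle I anticipate is in the harder direction of the biconditional, specifically verifying that the natural candidate $Y$ is atomistic as an orthoset: this requires transferring atomicity between ${\mathsf C}(X)$ and ${\mathsf C}(Y)$ using the very isomorphism whose existence depends on the density condition for $Y$ just established.
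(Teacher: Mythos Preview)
Your proposal is correct and follows essentially the same route as the paper: you define the same candidate $Y$, establish the density condition needed for Lemma~\ref{lem:isomorphism-of-ortholattice-of-suborthoset}, and derive (i) and (ii) from that isomorphism. The only cosmetic difference is that, where the paper verifies atomisticity of $Y$ by a direct orthocomplement computation, you obtain it by transporting atoms along the isomorphism ${\mathsf C}(X) \cong {\mathsf C}(Y)$ and invoking Lemma~\ref{lem:atomistic}(e); both arguments are equally valid.
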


\begin{proof}
Assume that ${\mathsf C}(X)$ is atomistic. Then each atom is of the form $\{y\}\cc$ for some $y \in X\withoutzero$. Let $Y$ be the set of all $y \in X$ such that $y=0$ or else $\{y\}\cc$ is an atom of ${\mathsf C}(X)$. For any $x \in X\withoutzero$, we then have $\{x\}\cc = \bigvee \big\{ \{y\}\cc \colon y \in Y \text{ such that } \{y\}\cc \subseteq \{x\}\cc \big\} = (\{x\}\cc \cap Y)\cc$, that is, $\{x\}\c = (\{x\}\cc \cap Y)\c$. Let $x, y \in Y\withoutzero$ be such that $\{x\}\ce{Y} \subseteq \{y\}\ce{Y}$. Then, in view of Lemma~\ref{lem:isomorphism-of-ortholattice-of-suborthoset}, we have $\{x\}\c = (\{x\}\c \cap Y)\cc = (\{x\}\ce{Y})\cc \subseteq (\{y\}\ce{Y})\cc = (\{y\}\c \cap Y)\cc = \{y\}\c$. By the atomisticity of ${\mathsf C}(X)$, it follows $\{x\}\c = \{y\}\c$ and hence $\{x\}\ce{Y} = \{y\}\ce{Y}$. We have shown that $Y$ is atomistic.

Conversely, assume that $Y$ is an atomistic suborthoset of $X$ such that $\{x\}\cc = (\{x\}\cc \cap Y)\cc$ for any $x \in X\withoutzero$. Note that then for any $x \in X\withoutzero$ there is a $y \in Y\withoutzero$ such that $\{y\}\cc \subseteq \{x\}\cc$. We claim that, for any $y \in Y\withoutzero$, $\{y\}\cc$ is an atom of ${\mathsf C}(X)$. Let $x \in X\withoutzero$ be such that $\{x\}\cc \subseteq \{y\}\cc$. Choose a $z \in Y\withoutzero$ such that $\{z\}\cc \subseteq \{x\}\cc$. Then $\{y\}\ce{Y} = \{y\}\c \cap Y \subseteq \{z\}\c \cap Y = \{z\}\ce{Y}$ and hence $\{y\}\ce{Y} = \{z\}\ce{Y}$. By Lemma~\ref{lem:isomorphism-of-ortholattice-of-suborthoset}, we conclude $\{y\}\c = (\{y\}\c \cap Y)\cc = (\{y\}\ce{Y})\cc = (\{z\}\ce{Y})\cc = (\{z\}\c \cap Y)\cc = \{z\}\c$ and hence $\{x\}\cc = \{y\}\cc$. The assertion follows and it is then also clear that ${\mathsf C}(X)$ is atomistic. Finally, if $x \in X\withoutzero$ is such that $\{x\}\cc$ is an atom, there is a $y \in Y\withoutzero$ such that $\{x\}\cc = \{y\}\cc$. Hence each atom of ${\mathcal C}(X)$ is of the form $\{y\}\cc$ for some $y \in Y\withoutzero$.

Finally, (i) holds by Lemma~\ref{lem:isomorphism-of-ortholattice-of-suborthoset}, and (ii) follows from (i).
\end{proof}

The third property among those introduced in Definition~\ref{def:separation-properties} is equivalent to the conjunction of the other two.

\begin{lemma} \label{lem:Frechet}
For an orthoset $X$, the following are equivalent.
\begin{itemize}

\item[\rm (a)] $X$ is Fr\' echet.

\item[\rm (b)] For any $x, y \in X\withoutzero$, $\{x\}\c \subseteq \{y\}\c$ implies $x=y$.

\item[\rm (c)] $X$ is irredundant and atomistic.

\item[\rm (d)] $\spec$ is equality.

\item[\rm (e)] For any $x \in X$, $\{x,0\}$ is orthoclosed.

\end{itemize}
\end{lemma}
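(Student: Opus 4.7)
My plan is to use the orthocomplement-theoretic characterisations already extracted in Lemmas \ref{lem:irredundant} and \ref{lem:atomistic} as a hub, routing all the equivalences through conditions (b) and (c).

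I would start with (a) $\Leftrightarrow$ (b) by unpacking the Fr\'echet condition: the existence of $z$ with $z \perp x$ but $z \notperp y$ is precisely $\{x\}\c \not\subseteq \{y\}\c$, and since (a) ranges over \emph{distinct} proper pairs (an inherently symmetric stipulation), (a) is equivalent to the assertion that $x \neq y$ implies $\{x\}\c \not\subseteq \{y\}\c$ for proper $x, y$, whose contrapositive is (b).

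Next I would observe that (b) is the outright conjunction of the criteria in Lemma \ref{lem:irredundant}(b) and Lemma \ref{lem:atomistic}(b): irredundance says $\{x\}\c = \{y\}\c \Rightarrow x = y$ and atomisticity says $\{x\}\c \subseteq \{y\}\c \Rightarrow \{x\}\c = \{y\}\c$, and chaining these yields (b); conversely, each is an instance of (b). Hence (b) $\Leftrightarrow$ (c). In the same spirit, for (c) $\Leftrightarrow$ (d) I would combine the preorder formulations: by Lemma \ref{lem:irredundant}(c) irredundance makes $\spec$ antisymmetric (so $\prl$ is equality), while by Lemma \ref{lem:atomistic}(c) atomisticity forces $\spec$ to coincide with $\prl$; together these collapse $\spec$ to equality.

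Finally, for (c) $\Leftrightarrow$ (e), I would reason as follows. Assuming (c), Lemma \ref{lem:atomistic}(d) gives $\{x\}\cc = \class x \cup \{0\}$ for proper $x$, and irredundance supplies $\class x = \{x\}$, whence $\{x\}\cc = \{x, 0\}$; the case $x = 0$ is trivial since $\{0\}\cc = \{0\}$. Conversely, if (e) holds then, using $\{x, 0\}\c = \{x\}\c \cap \{0\}\c = \{x\}\c$, we obtain $\{x\}\cc = \{x, 0\}$ for every proper $x$, which is visibly an atom of ${\mathsf C}(X)$; atomisticity then follows from Lemma \ref{lem:atomistic}(e), and the inclusion $\class x \subseteq \{x\}\cc \setminus \{0\} = \{x\}$ delivers irredundance. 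The argument is largely bookkeeping and no step poses a genuine obstacle; the substantive work has been done by the preceding two lemmas.
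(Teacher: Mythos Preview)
Your argument is correct and complete. The paper's own proof consists of the single word ``Straightforward'', so your detailed routing of the equivalences through Lemmas~\ref{lem:irredundant} and~\ref{lem:atomistic} is precisely the kind of expansion the authors have in mind; there is nothing to add.
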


\begin{proof}
Straightforward.
\end{proof}

In view of condition (e) of Lemma~\ref{lem:Frechet}, we may say that the property of an orthoset to be Fr\' echet corresponds to the property of a closure space to be $T_1$; see, e.g.,~\cite{Ern}.

For Fr\' echet orthosets, we get a one-to-one correspondence between orthosets and their associated ortholattices. An element $p$ of an ortholattice $L$ is called {\it basic} if $p$ is either an atom or the bottom element. We denote by ${\mathsf B}(L)$ the collection of all basic elements of $L$. Equipped with the orthogonality relation and the bottom element of~$L$, $\,{\mathsf B}(L)$ is an orthoset.

An {\it orthoisomorphism} is a bijection $f \colon X \to Y$ between orthosets such that $f(0) = 0$ and, for any $x, y \in X$, $x \perp y$ iff $f(x) \perp f(y)$.

\begin{proposition} \label{prop:Frechet-orthosets}
Let $X$ be a Fr\' echet orthoset. Then ${\mathsf C}(X)$ is a complete atomistic ortholattice and $X \to {\mathsf B}({\mathsf C}(X)) \komma x \mapsto \{x,0\}$ is an orthoisomorphism.

Conversely, let $L$ be a complete atomistic ortholattice. Then ${\mathsf B}(L)$ is a Fr\' echet orthoset and $L \to {\mathsf C}({\mathsf B}(L)) \komma a \mapsto \{ p \in {\mathsf B}(L) \colon p \leq a \}$ is an ortholattice isomorphism.
\end{proposition}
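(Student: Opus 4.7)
The plan is to tackle the two directions separately, relying systematically on Lemmas~\ref{lem:atomistic} and \ref{lem:Frechet}, together with Proposition~\ref{prop:orthosets-and-ortholattices}.

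For the forward direction, assume $X$ is Fr\'echet. Completeness of ${\mathsf C}(X)$ was already noted in the general discussion following Lemma~\ref{lem:isomorphism-of-ortholattice-of-suborthoset}. For atomicity, I would invoke Lemma~\ref{lem:Frechet} to get that $X$ is in particular atomistic, and then Lemma~\ref{lem:atomistic}(e) to conclude that $\{x\}\cc$ is an atom of ${\mathsf C}(X)$ for every $x \in X\withoutzero$. Since $A = \bigvee_{x \in A} \{x\}\cc$ for any $A \in {\mathsf C}(X)$, this shows ${\mathsf C}(X)$ is atomistic. To analyse the map $x \mapsto \{x,0\}$: by Lemma~\ref{lem:Frechet}(e) it is well-defined into ${\mathsf C}(X)$, and combining this with Lemma~\ref{lem:atomistic}(d) gives $\{x,0\} = \{x\}\cc$, an atom, for proper $x$, while $0 \mapsto \{0\}$ is the bottom basic element; hence the image lies in ${\mathsf B}({\mathsf C}(X))$. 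Injectivity is immediate, and surjectivity follows because by Lemma~\ref{lem:atomistic}(e) every atom of ${\mathsf C}(X)$ is of the form $\{x\}\cc = \{x,0\}$ for some $x \in X\withoutzero$. Orthogonality is preserved in both directions: $x \perp y$ holds in $X$ iff $\{x,0\} \subseteq \{y\}\c$, and since $\{y,0\}\c = \{y\}\c$ (as $0$ is orthogonal to everything), this is exactly $\{x,0\} \perp \{y,0\}$ in ${\mathsf C}(X)$.

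For the converse, assume $L$ is a complete atomistic ortholattice. Atomicity of $L$ says exactly that ${\mathsf B}(L)$ is join-dense in $L$, so by Proposition~\ref{prop:orthosets-and-ortholattices}, ${\mathsf B}(L)$ is an orthoset and the assignment $\iota_L \colon a \mapsto \{p \in {\mathsf B}(L) \colon p \leq a\}$ is the Dedekind-MacNeille completion of $L$ into ${\mathsf C}({\mathsf B}(L))$; since $L$ is already complete, $\iota_L$ is a lattice isomorphism. To see that ${\mathsf B}(L)$ is Fr\'echet I would verify criterion (b) of Lemma~\ref{lem:Frechet}: given two atoms $p, q$ with $\{p\}\c \subseteq \{q\}\c$ in ${\mathsf B}(L)$, every atom below $p\c$ lies below $q\c$, so by atomicity of $L$ we have $p\c = \bigvee\{r \in {\mathsf B}(L)\withoutzero \colon r \leq p\c\} \leq q\c$, whence $q \leq p$ and therefore $q=p$. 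Finally I need to upgrade $\iota_L$ to an ortholattice isomorphism, i.e.\ verify $\iota_L(a\c) = \iota_L(a)\c$; the inclusion $\subseteq$ is immediate, and for the reverse, if $q \in {\mathsf B}(L)$ satisfies $q \leq p\c$ for every atom $p \leq a$, then joining these atoms and using atomicity of $L$ yields $q \leq a\c$, as required.

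I do not anticipate any genuine obstacle: both directions reduce to a careful bookkeeping of the separation lemmas and the Dedekind-MacNeille property already proven. The mildest point of care is the use of atomicity of $L$ in the converse direction to turn pointwise comparisons between atom-sets into lattice-level inequalities between orthocomplements; everything else assembles directly from the earlier results.
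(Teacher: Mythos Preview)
Your proposal is correct and follows essentially the same route as the paper: the forward direction rests on Lemma~\ref{lem:Frechet} (and the atomisticity consequences encoded in Lemma~\ref{lem:atomistic}), and the converse on Proposition~\ref{prop:orthosets-and-ortholattices}. The paper's proof is extremely terse---it merely cites these two results---so your version simply unpacks the details (surjectivity of $x \mapsto \{x,0\}$, the Fr\'echet property of ${\mathsf B}(L)$, and the preservation of orthocomplements under $\iota_L$) that the paper leaves implicit.
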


\begin{proof}
For the first part, we recall that, by Lemma~\ref{lem:Frechet}, if the orthoset $X$ is Fr\' echet then $\{x,0\}$ is orthoclosed for any $x \in X$. The second part is clear from Proposition~\ref{prop:orthosets-and-ortholattices}.
\end{proof}

\begin{example} \label{ex:basic-2}
Let $H$ be a Hilbert space, viewed as an orthoset as in Example~\ref{ex:basic-1}. Then $H$ is atomistic. Indeed, for any distinct non-zero vectors $u, v \in H$, the subspaces $\{ x \in H \colon x \perp u \}$ and $\{ x \in H \colon x \perp v \}$ either coincide or are incomparable. Moreover, $H$ is not irredundant, and its irredundant quotient is $P(H)$ as specified in Example~\ref{ex:basic-1}. Note that $P(H)$ is Fr\' echet.
\end{example}

We finally mention the situation that a pair of complementary subspaces exhausts an orthoset. An orthoset $X$ is called {\it reducible} if there is a decomposition $(A,B)$ of $X$ into non-zero subspaces such that $A \cup B = X$; otherwise, we say that $X$ is {\it irreducible}.

Similarly, we call an ortholattice $L$ {\it irreducible} if $L$ is directly indecomposable.

\begin{lemma} \label{lem:irreducible-orthoset}
An atomistic orthoset $X$ is irreducible if and only if ${\mathsf C}(X)$ is irreducible.
\end{lemma}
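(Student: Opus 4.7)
The plan is to reduce both sides of the equivalence to the existence of a common witness, namely an orthoclosed set $A$ with $A, A^\perp \neq \{0\}$ and $A \cup A^\perp = X$, and then show that, under atomisticity, such an $A$ is exactly a non-trivial central element of ${\mathsf C}(X)$. The reducibility of $X$ already reads, unfolded, as the existence of a non-zero orthoclosed $A$ with $A^\perp$ non-zero and $A \cup A^\perp = X$; the reducibility of ${\mathsf C}(X)$ (as a complete ortholattice) amounts by a standard argument to the existence of a central element $A \in {\mathsf C}(X)$ with $A \notin \{\{0\},X\}$, i.e.\ an $A$ for which $B = (B \cap A) \vee (B \cap A^\perp)$ for every $B \in {\mathsf C}(X)$.

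The main tool is Lemma~\ref{lem:atomistic-orthoset-atomistic-ortholattice}, which guarantees that ${\mathsf C}(X)$ is atomistic with atoms exactly the sets $\{x\}\cc$, $x \in X\withoutzero$, together with Lemma~\ref{lem:atomistic}(d), which identifies $\{x\}\cc$ with $\class x \cup \{0\}$. In particular, every $B \in {\mathsf C}(X)$ is the join in ${\mathsf C}(X)$ of the atoms it contains.

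For the direction $X$ reducible $\Rightarrow$ ${\mathsf C}(X)$ reducible, I would take the witnessing $(A,A^\perp)$ and verify that $A$ is central. Given any atom $p = \{x\}\cc$ of ${\mathsf C}(X)$, the hypothesis $A \cup A^\perp = X$ puts $x$ in $A$ or in $A^\perp$, so $p \subseteq A$ or $p \subseteq A^\perp$. For an arbitrary $B \in {\mathsf C}(X)$, expanding $B$ as the join of its atoms and separating those contained in $A$ from those contained in $A^\perp$ then yields $B = (B \cap A) \vee (B \cap A^\perp)$; combined with $A \cap A^\perp = \{0\}$ and $A \vee A^\perp = X$, this makes $A$ central, and it is nontrivial because both $A$ and $A^\perp$ are non-zero subspaces. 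Hence ${\mathsf C}(X)$ is reducible.

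For the converse, suppose ${\mathsf C}(X)$ is reducible and pick a central $A$ with $A, A^\perp \neq \{0\}$. A standard atomistic-ortholattice argument shows every atom lies below $A$ or below $A^\perp$: if $p$ is an atom, then $p = (p \cap A) \vee (p \cap A^\perp)$, and since $p \cap A$ and $p \cap A^\perp$ are both $\leq p$ and their meet is $\{0\}$, exactly one of them equals $p$. Applied to $p = \{x\}\cc$ for $x \in X\withoutzero$, this forces $x \in A$ or $x \in A^\perp$, so $X = A \cup A^\perp$. Thus $(A,A^\perp)$ witnesses the reducibility of $X$. The only delicate point is the unpacking of ``directly indecomposable'' for complete ortholattices into the triviality of the centre, but this is standard and independent of orthomodularity; everything else is a routine application of atomisticity.
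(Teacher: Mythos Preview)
Your proof is correct and lands on the same key observation as the paper: atomisticity forces every atom $\{x\}\cc$ of ${\mathsf C}(X)$ to sit entirely inside one of the two pieces of a putative decomposition, which is exactly what ties reducibility of $X$ to reducibility of ${\mathsf C}(X)$.

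The presentation differs, though. For the implication ``$X$ reducible $\Rightarrow$ ${\mathsf C}(X)$ reducible'', the paper bypasses centrality and simply asserts that $A \cup A\c = X$ yields an isomorphism ${\mathsf C}(X) \cong {\mathsf C}(A) \times {\mathsf C}(A\c)$; this holds for \emph{any} orthoset (no atomisticity needed) by a direct computation with orthocomplements. You instead prove that $A$ is central via the atomistic join decomposition of each $B \in {\mathsf C}(X)$ and then invoke the standard equivalence between nontrivial central elements and direct decomposability of an ortholattice. This is a genuine, if mild, detour: it uses atomisticity where none is required and imports an external (though indeed standard) lattice-theoretic fact. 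For the converse implication the two arguments are essentially the same---the paper sorts atoms via an abstract isomorphism $\tau \colon {\mathsf C}(X) \to L_1 \times L_2$, while you sort them via the central element $A = \tau^{-1}(L_1 \times \{0\})$; the combinatorics is identical. Your route is a bit more conceptual; the paper's is slightly more economical.
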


\begin{proof}
Let $X$ possess a decomposition $(A,A\c)$ such that $A \cup A\c = X$. Then ${\mathsf C}(X)$ is isomorphic to ${\mathsf C}(A) \times {\mathsf C}(A\c)$. The ``if'' part follows.

To see the ``only if'' part, let $\tau \colon {\mathsf C}(X) \to L_1 \times L_2$ be an isomorphism, where $L_1$ and $L_2$ are ortholattices with at least two elements. By Lemma~\ref{lem:atomistic}, for any $x \in X\withoutzero$, $\{x\}\cc$ is an atom of ${\mathsf C}(X)$, hence either $\tau(\{x\}\cc) = (p,0)$, where $p$ is an atom of $L_1$, or $\tau(\{x\}\cc) = (0,q)$, where $q$ is an atom of $L_2$. Let $A = \{ x \in X \colon \tau(\{x\}\cc) \in L_1 \times \{0\} \}$ and $B = \{ x \in X \colon \tau(\{x\}\cc) \in \{0\} \times L_2 \}$. Then $(A,B)$ is a decomposition of $X$ into non-zero subspaces such that $A \cup B = X$.
\end{proof}

\subsection*{Dacey spaces}

The orthosets occurring in the present context are characterised by an additional property that cannot be expressed by a first-order statement. It concerns the situation that we like to consider the subspaces of an orthoset that is itself contained as a subspace in a larger orthoset.

Let us note first that the subspace relation is in the following sense transitive: if $A$ is a subspace of an orthoset $X$ and $B$ is in turn a subspace of $A$, then $B$ is a subspace of $X$. Indeed, in this case $B = B\cce{A} = (B\cc \vee A\c) \cap A \in {\mathsf C}(X)$. However, the subspaces of $X$ that are contained in $A$ are not necessarily subspaces of~$A$.

We call $X$ a {\it Dacey space} if ${\mathsf C}(X)$ is orthomodular. The next lemma shows that the indicated unintuitive situation does not occur exactly if $X$ is a Dacey space.  A further, convenient characterisation of Dacey spaces is Dacey's criterion \cite{Dac,Wlc}, which we include as criterion (d) in the lemma.

\begin{lemma} \label{lem:Dacey-space}
Let $X$ be an orthoset. Then the following are equivalent:
\begin{itemize}

\item[\rm (a)] $X$ is Dacey.

\item[\rm (b)] For any subspace $A$ of $X$ and any $B \subseteq A$, we have $B\cce{A} = B\cc$.

\item[\rm (c)] For any subspace $A$ of $X$, ${\mathsf C}(A) = \{ B \in {\mathsf C}(X) \colon B \subseteq A \}$.

\item[\rm (d)] For any $A \in {\mathsf C}(X)$ and any maximal $\perp$-set $D$ contained in $A$, we have that $A = D\cc$.

\end{itemize}
\end{lemma}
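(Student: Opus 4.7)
The plan is to establish the equivalences via (b) $\Leftrightarrow$ (c) by direct computation, (a) $\Leftrightarrow$ (c) using the orthomodular structure of intervals, and (a) $\Leftrightarrow$ (d) by Dacey's classical argument. I would first handle (b) $\Leftrightarrow$ (c) through a purely formal preliminary observation: for any subspace $A$ of $X$ and any $B \subseteq A$, one has $B\cc \subseteq A\cc = A$, and since $B\c \cap A \subseteq B\c$, also $B\cce{A} = (B\c \cap A)\c \cap A \supseteq B\cc \cap A = B\cc$. In particular, $B = B\cce{A}$ automatically forces $B = B\cc$, so the inclusion ${\mathsf C}(A) \subseteq \{B \in {\mathsf C}(X) \colon B \subseteq A\}$ holds unconditionally, and both (b) and (c) reduce to the reverse inclusion.

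For (a) $\Rightarrow$ (c), I would invoke the standard fact that the principal interval $[\{0\}, A]$ of an orthomodular lattice is itself orthomodular with sublattice orthocomplementation $B \mapsto A \cap B\c$; involutivity of this operation, which is an immediate consequence of the orthomodular identity, yields $A \cap (A \cap B\c)\c = B$ for every $B \in {\mathsf C}(X)$ with $B \subseteq A$, and the left-hand side is precisely $B\cce{A}$, hence $B \in {\mathsf C}(A)$. For (c) $\Rightarrow$ (a), I would first note that (c) implies (b), so that orthoclosures in $X$ and in $A$ coincide on subsets of $A$; consequently ${\mathsf C}(A)$ is a genuine sublattice of ${\mathsf C}(X)$, not just a subset. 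Given $B \leq A$ in ${\mathsf C}(X)$, the identity $B \vee (A \cap B\c) = A$ already holds inside ${\mathsf C}(A)$, as $A \cap B\c$ is the $A$-orthocomplement of $B$ and hence joins with $B$ to the top $A$; this equality transfers unchanged to ${\mathsf C}(X)$, producing the orthomodular identity.

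Finally, for (a) $\Leftrightarrow$ (d), the classical Dacey argument runs as follows. Assuming (a) and a maximal $\perp$-set $D \subseteq A \in {\mathsf C}(X)$, if $D\cc \subsetneq A$ then orthomodularity applied to $D\cc \leq A$ forces $A \cap D\c \neq \{0\}$, and any proper element of $A \cap D\c$ extends $D$ to a strictly larger $\perp$-set, contradicting maximality. Conversely, assuming (d) and given $B \leq A$ in ${\mathsf C}(X)$, I would extend a maximal $\perp$-set $D_1$ of $B$ via Zorn's lemma to a maximal $\perp$-set $D = D_1 \disjointunion D_2$ of $A$; since $B = D_1\cc$ by (d) applied to $B$, the elements of $D_2$ lie in $A \cap B\c$, and (d) applied to $A$ yields $A = D\cc = D_1\cc \vee D_2\cc \leq B \vee (A \cap B\c) \leq A$, which is the orthomodular identity.

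The most delicate step is (c) $\Rightarrow$ (a): I must guarantee that the identification of ${\mathsf C}(A)$ with the principal downset of $A$ in ${\mathsf C}(X)$ respects the lattice operations and not merely the underlying set, which is exactly why the passage through (b) is essential. Everything else reduces to routine manipulations in orthomodular lattice theory and an application of Zorn's lemma.
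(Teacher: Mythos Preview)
Your proposal is correct. The core ideas match the paper's --- the orthomodular interval computation for (a) $\Rightarrow$ (b)/(c), and the classical $\perp$-set extension argument for (d) $\Rightarrow$ (a) --- but the organisation differs. The paper runs a single cycle (a) $\Rightarrow$ (b) $\Rightarrow$ (c) $\Rightarrow$ (d) $\Rightarrow$ (a), while you prove (b) $\Leftrightarrow$ (c), (a) $\Leftrightarrow$ (c), and (a) $\Leftrightarrow$ (d) separately. Your direct (c) $\Rightarrow$ (a), via the observation that (b) makes ${\mathsf C}(A)$ a genuine sublattice of ${\mathsf C}(X)$ so that the ortholattice identity $B \vee B\ce{A} = A$ transfers verbatim, is a clean route the paper does not take; instead the paper goes (c) $\Rightarrow$ (d) by the slick remark that $D\cc \in {\mathsf C}(A)$ forces $D\cc = D\cce{A} = A$ by maximality of $D$, and only then returns to (a). The paper's cycle is more economical (four implications versus your six, with your (c) $\Rightarrow$ (a) and (d) $\Rightarrow$ (a) being redundant), but your structure renders each pairwise equivalence self-contained and makes the lattice-theoretic content of (c) $\Rightarrow$ (a) more transparent.
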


\begin{proof}
(a) $\Rightarrow$ (b): Assume that $X$ is Dacey and $A \in {\mathsf C}(X)$. Then, for any $B \subseteq A$, we have by orthomodularity $B\cce{A} = (B\c \cap A)\c \cap A = (B\cc \vee A\c) \cap A = B\cc$.

(b) $\Rightarrow$ (c): Assume (b). Then $B \subseteq A$ is orthoclosed in the subspace $A$ if and only if $B$ is orthoclosed in $X$.

(c) $\Rightarrow$ (d): Let $A \in {\mathsf C}(X)$ and let $D \subseteq A$ be a maximal $\perp$-set. Then $D\cc \in {\mathsf C}(X)$ and $D\cc \subseteq A$. Assuming (c), we have $D\cc \in {\mathsf C}(A)$. But then $D\cc = (D\cc)\cce{A} = D\cce{A} = A$ because of the maximality of $D$.

(d) $\Rightarrow$ (a): Let $A, B \in {\mathsf C}(X)$ such that $A \subseteq B$. Extend a maximal $\perp$-set $D \subseteq A$ to a maximal $\perp$-set $E \subseteq B$. Assume that (d) holds. Then $(E \setminus D)\cc \perp D\cc = A$, hence $(E \setminus D)\cc \subseteq B \cap A\c$. We conclude $B = E\cc = D\cc \vee (E \setminus D)\cc \subseteq A \vee (B \cap A\c) \subseteq B$. That is, $B = A \vee (B \cap A\c)$, which shows the orthomodularity of ${\mathsf C}(X)$.
\end{proof}

The next lemma compiles some properties that are preserved from Dacey spaces to their subspaces.

We recall that a lattice is said to have the {\it covering property} if, for any element $a$ and atom $p \nleq a$, $\,a \vee p$ covers $a$. By an {\it AC} lattice, we mean an atomistic lattice with the covering property.

\begin{lemma} \label{lem:subspaces-of-Dacey-spaces}
Let $A$ be a subspace of a Dacey space $X$.
\begin{itemize}

\item[\rm (i)] $A$ is Dacey as well.

\item[\rm (ii)] The subspaces of $A$ are the subspaces of $X$ contained in $A$.

\item[\rm (iii)] Let $x, y \in A$. Then $x$ and $y$ are equivalent elements of $A$ if and only if $x$ and $y$ are equivalent elements of $X$. That is, we have $P_X(A) = P(A)$.

\item[\rm (iv)] If $X$ is irredundant, so is $A$.

\item[\rm (v)] If $X$ is atomistic, so is $A$ and the atoms of $A$ are $\{x\}\cc$, $x \in A$.

\item[\rm (vi)] If ${\mathsf C}(X)$ has the covering property, so has ${\mathsf C}(A)$.

\end{itemize}
\end{lemma}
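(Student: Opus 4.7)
The plan is to leverage Lemma~\ref{lem:Dacey-space}: since $X$ is Dacey, condition (b) tells us that $B\cce{A} = B\cc$ for every $B \subseteq A$, and condition (c) tells us that ${\mathsf C}(A) = \{B \in {\mathsf C}(X) \colon B \subseteq A\}$. These two facts will reduce almost everything to calculations inside ${\mathsf C}(X)$. Thus (ii) is nothing but condition (c). For (i), I would verify orthomodularity of ${\mathsf C}(A)$ directly: given $B_1 \subseteq B_2$ in ${\mathsf C}(A)$, the meet $B_2 \cap B_1\ce{A}$ equals $B_2 \cap B_1\c$ because $B_2 \subseteq A$, while the join $B_1 \vee (B_2 \cap B_1\ce{A})$ computed in ${\mathsf C}(A)$ is the double orthocomplement of $B_1 \cup (B_2 \cap B_1\c)$, which by condition (b) coincides with the corresponding join in ${\mathsf C}(X)$; orthomodularity of ${\mathsf C}(X)$ then delivers $B_2$.

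For (iii), the direction ``equivalent in $X$ implies equivalent in $A$'' is immediate upon intersection with $A$; conversely, from $\{x\}\ce{A} = \{y\}\ce{A}$, a single application of $\ce{A}$ together with condition (b) of Lemma~\ref{lem:Dacey-space} yields $\{x\}\cc = \{y\}\cc$, hence $\{x\}\c = \{y\}\c$. Part (iv) then follows by combining (iii) with condition (b) of Lemma~\ref{lem:irredundant}. For the atomisticity claim in (v), I would check condition (b) of Lemma~\ref{lem:atomistic}: assuming $\{x\}\ce{A} \subseteq \{y\}\ce{A}$ for proper $x, y \in A$, applying $\ce{A}$ reverses the inclusion, and condition (b) of Lemma~\ref{lem:Dacey-space} converts it into $\{y\}\cc \subseteq \{x\}\cc$ inside ${\mathsf C}(X)$; atomisticity of $X$ forces equality, which propagates back after intersecting with $A$. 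The atoms of ${\mathsf C}(A)$ are, by (ii), precisely those atoms of ${\mathsf C}(X)$ that lie in $A$; by Lemma~\ref{lem:atomistic}(e) each such atom has the form $\{x\}\cc$ for some $x \in X\withoutzero$, and since it is contained in $A$ we may pick $x \in A\withoutzero$.

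For (vi), consider $B \in {\mathsf C}(A)$ and an atom $p$ of ${\mathsf C}(A)$ with $p \not\subseteq B$. Part (ii) identifies $B$ as an element of ${\mathsf C}(X)$, and any orthoclosed subset of $X$ strictly between $\{0\}$ and $p$ would again by (ii) lie in ${\mathsf C}(A)$, contradicting $p$ being an atom there; hence $p$ is an atom of ${\mathsf C}(X)$ as well. The covering property of ${\mathsf C}(X)$ then makes $B \vee p$ cover $B$ there, and since condition (b) of Lemma~\ref{lem:Dacey-space} ensures that the join in ${\mathsf C}(A)$ agrees with the join in ${\mathsf C}(X)$, part (ii) lifts the covering relation back into ${\mathsf C}(A)$. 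The lemma as a whole is essentially bookkeeping powered by the Dacey hypothesis; the only genuine care required is to keep track of which orthocomplement is being taken, and the subtlest spot is in (iii) and (v), where one must apply $\ce{A}$ to singletons and invoke condition (b) to move the argument from $A$ back into $X$.
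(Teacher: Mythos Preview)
Your proof is correct and follows essentially the same strategy as the paper: everything is driven by conditions (b) and (c) of Lemma~\ref{lem:Dacey-space}. The tactical details differ in a few places --- for (i) you verify the orthomodular law directly whereas the paper re-establishes criterion (b) of Lemma~\ref{lem:Dacey-space} for $A$; for (iii) your argument via $\{x\}\cce{A} = \{x\}\cc$ is in fact a bit cleaner than the paper's, which instead shows $\{x\}\ce{A} \vee A\c = \{x\}\c$ using orthomodularity; and for (vi) the paper simply observes that ${\mathsf C}(A)$ is a principal ideal of ${\mathsf C}(X)$, which is the one-line version of what you spell out --- but none of these differences is substantive.
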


\begin{proof}
Ad (i): For any $B \in {\mathsf C}(A)$ and $C \subseteq B$, we have by criterion (c) of Lemma~\ref{lem:Dacey-space} that $B \in {\mathsf C}(X)$ and hence $C\cce{B} = C\cc = C\cce{A}$ by criterion (b). Hence also $A$ is Dacey by criterion (b).

Ad (ii): This is a reformulation of criterion (c) of Lemma~\ref{lem:Dacey-space}.

Ad (iii): From $\{x\}\c = \{y\}\c$ it follows $\{x\}\ce{A} = \{x\}\c \cap A = \{y\}\c \cap A = \{y\}\ce{A}$. To see the converse, note that, by orthomodularity,
$\{x\}\ce{A} \vee A\c = (\{x\}\c \cap A) \vee A\c = \{x\}\c$ and similarly $\{y\}\ce{A} \vee A\c = \{y\}\c$. Hence $\{x\}\ce{A} = \{y\}\ce{A}$ implies $\{x\}\c = \{y\}\c$.

Ad (iv): This is clear from part (iii).

Ad (v): Let $X$ be atomistic. For any $x \in A$, we have $\{x\}\cce{A} = \{x\}\cc = \class x \cup \{0\}$ by Lemma~\ref{lem:Dacey-space} and Lemma~\ref{lem:atomistic}. Again by Lemma~\ref{lem:atomistic} and by part (iii), it follows that also $A$ is atomistic.

Ad (vi): By criterion (c) of Lemma~\ref{lem:Dacey-space}, the lattice ${\mathsf C}(A)$ is a principal ideal of the lattice ${\mathsf C}(X)$.
\end{proof}

\section{Adjointable maps}
\label{sec:Adjointable-maps}

We now turn to the core issue of the present work: we introduce the concept of adjointability of maps between orthosets. Adjoints of maps between orthosets can be seen as a generalisation of adjoint operators between Hilbert spaces.

We investigate in this section the consequences resulting from the adjointability of mappings. We establish that these maps are compatible with the equivalence of elements and thus induce maps between the irredundant quotients. We observe that adjointable maps, seen as maps between closure spaces, are continuous. Furthermore, we point out that any adjointable map induces a map between the associated ortholattices which is likewise adjointable. Finally, in analogy to linear maps between vector spaces, we relate kernels and images of adjointable maps to injectivity and surjectivity. We also discuss reducing subspaces and we define scalar maps, for which each subspace is invariant.

\begin{definition} \label{def:adjoint}
Let $f \colon X \to Y$ be a map between orthosets. We call $g \colon Y \to X$ an {\it adjoint} of $f$ if, for any $x \in X$ and $y \in Y$,
\[ f(x) \perp y \quad\text{if and only if}\quad x \perp g(y). \]
Moreover, a map $f \colon X \to X$ is called {\it self-adjoint} if $f$ is an adjoint of itself.
\end{definition}

It is clear that adjointness is a symmetric property: if a map $f$ possesses an adjoint $g$, then $f$ is also an adjoint of $g$. To stress the symmetry, we may speak of $f$ and $g$ as an adjoint pair.

Obviously, the identity map $\id \colon X \to X$ is self-adjoint. Moreover, if $g$ is an adjoint of $f \colon X \to Y$ and $k$ is an adjoint of $h \colon Y \to Z$, then obviously $g \circ k$ is an adjoint of $h \circ f$.

In what follows, the question of the existence of an adjoint will be most important. We will call a map $f \colon X \to Y$ between orthosets {\it adjointable} if there is an adjoint $g \colon Y \to X$ of $f$. $f$ is not in general adjointable and if so, the adjoint of $f$ need not be unique.

\begin{example} \label{ex:Adjoints-in-Hilbert-spaces}
Let $H_1$ and $H_2$ be Hilbert spaces and let $\phi \colon H_1 \to H_2$ be a bounded linear map. Viewing $H_1$ and $H_2$ as orthosets, we then have that $\phi$ is adjointable. Indeed, $\phi\adj$, the usual adjoint of $\phi$, is an adjoint of $\phi$ in the sense of Definition~\ref{def:adjoint}. Any non-zero multiple of $\phi\adj$ is likewise an adjoint of $\phi$.

Moreover, $\phi$ induces a map between the irredundant quotients $P(H_1)$ and $P(H_2)$, namely
\begin{equation} \label{fml:Adjoints-in-Hilbert-spaces}
P(\phi) \colon P(H_1) \to P(H_2) \komma \class x \mapsto \class{\phi(x)}.
\end{equation}
This map is adjointable as well: obviously, $P(\phi\adj)$ is an adjoint of $P(\phi)$.
\end{example}

Our first observation is that adjointable maps preserve the equivalence of elements of orthosets. Consequently, they are compatible with the formation of irredundant quotients.

Let $f \colon X \to Y$ be a map between orthosets and assume that $f$ preserves $\prl$. Generalising (\ref{fml:Adjoints-in-Hilbert-spaces}) in Example \ref{ex:Adjoints-in-Hilbert-spaces}, we put
\[ P(f) \colon P(X) \to P(Y) \komma \class x \mapsto \class{f(x)}. \] 

\begin{proposition} \label{prop:quotient-map}
A map $f \colon X \to Y$ between orthosets is adjointable if and only if $f$ preserves $\prl$ and $P(f)$ is adjointable.

In this case, $g \colon Y \to X$ is an adjoint of $f$ if and only if $g$ preserves $\prl$ and $P(g)$ is an adjoint of $P(f)$.
\end{proposition}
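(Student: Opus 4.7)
The plan is to prove the detailed ``moreover'' clause first and then derive the main biconditional from it. The engine of everything is that, by Proposition~\ref{prop:irredundant-quotient}, orthogonality on $P(X)$ is tested on any representatives, so assertions involving $\perp$ translate without friction between $X$ and $P(X)$.

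For the detailed clause, suppose first that $g$ is an adjoint of $f$. Given $x \prl x'$, the chain
\[ f(x) \perp y \;\iff\; x \perp g(y) \;\iff\; x' \perp g(y) \;\iff\; f(x') \perp y \]
holds for every $y \in Y$ (the middle step using $\{x\}\c = \{x'\}\c$), so $\{f(x)\}\c = \{f(x')\}\c$ and hence $f(x) \prl f(x')$; by the symmetry of adjointness, $g$ preserves $\prl$ as well. Both $P(f)$ and $P(g)$ are then well-defined, and the very same chain, read at the level of equivalence classes, shows that $P(g)$ is an adjoint of $P(f)$. Conversely, if $f$ and $g$ both preserve $\prl$ and $P(g)$ is an adjoint of $P(f)$, then for any $x \in X$ and $y \in Y$ the same equivalences, read from the class level down to representatives, show directly that $g$ is an adjoint of $f$.

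For the main biconditional, the forward direction is part of what has just been shown. For the converse, assume $f$ preserves $\prl$ and let $\bar g \colon P(Y) \to P(X)$ be any adjoint of $P(f)$. Choose, for each $y \in Y$, a representative $g(y) \in \bar g(\class y) \subseteq X$; then $\class{g(y)} = \bar g(\class y)$, so $g$ manifestly preserves $\prl$ and $P(g) = \bar g$. By the converse half of the detailed clause already proved, $g$ is an adjoint of $f$, witnessing that $f$ itself is adjointable.

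No genuine obstacle is encountered: the substance of the proposition lies entirely in the representative-wise definition of $\perp$ on $P(X)$, which renders the key chains of equivalences automatic. The only point worth flagging is the appeal to a choice of representatives in the converse half of the main biconditional, which is what is needed to lift the abstract adjoint $\bar g$ of $P(f)$ to an actual map $g \colon Y \to X$.
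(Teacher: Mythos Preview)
Your proof is correct and follows essentially the same approach as the paper: both use the representative-wise chain $f(x)\perp y \iff x\perp g(y)$ to show preservation of $\prl$ and the adjointness of $P(f)$ and $P(g)$, and both lift an abstract adjoint $\bar g$ of $P(f)$ to a map $g\colon Y\to X$ by choosing representatives. The only difference is organisational---you prove the second clause first and derive the first from it---and you rightly flag the use of choice, which the paper also notes immediately after the proposition.
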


\begin{proof}
Let $f$ possess the adjoint $g$. Let $x, x' \in X$ be such that $x \prl x'$. Then, for any $y \in Y$, we have $f(x) \perp y$ iff $x \perp g(y)$ iff $x' \perp g(y)$ iff $f(x') \perp y$. Hence $f(x) \prl f(x')$. We conclude that $f$ preserves $\prl$. Similarly, we see that so does $g$. Moreover, for any $x \in X$ and $y \in Y$, we have that $P(f)(\class x) \perp \class y$ iff $f(x) \perp y$ iff $x \perp g(y)$ iff $\class x \perp P(g)(\class y)$. Hence $P(g)$ is an adjoint of $P(f)$.

Conversely, assume that $f$ preserves $\prl$ and $G \colon P(Y) \to P(X)$ is an adjoint of $P(f)$. Let $g \colon Y \to X$ be any map such that $g(y) \in G(\class y)$ for any $y \in Y$. Then $g$ preserves $\prl$ and $G = P(g)$. The fact that $P(g)$ is an adjoint of $P(f)$ in turn implies that $g$ is an adjoint of $f$.
\end{proof}

With regard to Proposition \ref{prop:quotient-map} we note that, to infer the adjointability of a map $f$ from the adjointability of $P(f)$, the axiom of choice is needed.

Let us call adjointable maps $f, f' \colon X \to Y$ {\it equivalent} if $f(x) \prl f'(x)$ for all $x \in X$. We write $f \prl f'$ in this case. In other words, for $f$ and $f'$ to be equivalent means that $P(f) = P(f')$.

It is immediate that the adjoints of the same map are mutually equivalent.

\begin{lemma} \label{lem:irredundancy-adjoints-1}
Let $X$ and $Y$ be orthosets.
\begin{itemize}

\item[\rm (i)] Let $f \colon X \to Y$ and $g \colon Y \to X$ be an adjoint pair. Then a further map $h \colon Y \to X$ is an adjoint of $f$ if and only if $h \prl g$.

\item[\rm (ii)] $X$ is irredundant if and only if any map from $X$ to $Y$ possesses at most one adjoint.

\end{itemize}
\end{lemma}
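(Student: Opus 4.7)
The plan for part (i) is to translate both sides into statements about the orthocomplements of singletons and then exploit that $g$ is already known to be an adjoint of $f$. Unpacking definitions, $h \prl g$ amounts to $\{h(y)\}\c = \{g(y)\}\c$ for every $y \in Y$, while $h$ being an adjoint of $f$ amounts to $\{x \in X : x \perp h(y)\} = \{x \in X : f(x) \perp y\}$ for every $y$. Since $g$ is already an adjoint of $f$, the right-hand side of the latter equation equals $\{x \in X : x \perp g(y)\}$, so the two conditions coincide and both implications of (i) drop out at once.

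For the forward direction of part (ii), I would simply combine (i) with Lemma~\ref{lem:irredundant}(d): irredundancy of $X$ means $\prl$ is equality on $X$, so any two adjoints of the same map $f$, being $\prl$-equivalent by (i), must agree pointwise. For the reverse direction I plan to argue contrapositively by exhibiting a witness. Suppose $X$ is not irredundant and pick distinct proper elements $x_1, x_2 \in X$ with $x_1 \prl x_2$. Take $Y = X$ and $f = \id_X$, which is self-adjoint. Then the map $g \colon X \to X$ defined by $g(x_1) = x_2$ and $g(x) = x$ otherwise satisfies $g(x) \prl x$ for every $x \in X$, so by part (i) it is another adjoint of $\id_X$, distinct from $\id_X$ because $g(x_1) \neq x_1$.

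I do not anticipate any serious obstacle. The one point meriting a little care is the choice of witness in the converse of (ii); taking $Y = X$ together with the self-adjoint identity sidesteps any need to construct a more elaborate target orthoset, since part (i) already supplies a second adjoint via the freedom to redefine $\id_X$ on a single $\prl$-equivalence class whenever $\prl$ fails to be equality.
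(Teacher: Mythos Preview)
Your arguments for part~(i) and for the forward implication in~(ii) are correct and essentially identical to the paper's.

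For the converse of~(ii), however, there is a gap. The lemma fixes $Y$ at the outset (``Let $X$ and $Y$ be orthosets''), so the contrapositive obliges you to exhibit a map from $X$ to the \emph{given} $Y$ with two distinct adjoints; you are not free to set $Y = X$. Your witness $\id_X$ together with its perturbation $g$ lives in the wrong hom-set unless $Y$ happens to coincide with $X$. The paper instead chooses an element $c \in Y$ and defines $f \colon X \to Y$ by $f(x) = c$ if $x \notperp b$ and $f(x) = 0$ otherwise, where $b, b'$ are the distinct proper elements with $b \prl b'$; the two maps $g, g' \colon Y \to X$ sending $y$ to $b$ (respectively $b'$) when $y \notperp c$, and to $0$ otherwise, are then distinct adjoints of $f$. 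Your underlying insight---that one may perturb any adjoint within a nontrivial $\prl$-class of $X$ and invoke part~(i)---is exactly the right mechanism; what is missing is first constructing an adjointable $f \colon X \to Y$ whose adjoint actually takes a value in such a class, and the paper's explicit $f$ supplies precisely that.
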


\begin{proof}
Ad (i): Let $h \colon Y \to X$ be an adjoint of $f$. Then, for any $y \in Y$, we have $\{g(y)\}\c = \{ x \in X \colon f(x) \perp y \} = \{h(y)\}\c$, that is $g(y) \prl h(y)$. This shows the ``only if'' part; the ``if'' part is obvious.

Ad (ii): By part (i), the irredundancy of $X$ implies the uniqueness of adjoints. For the converse direction, assume that $X$ is not irredundant. Let $b$ and $b'$ be distinct elements of $X$ such that $b \prl b'$. Let moreover $c \in Y$ and
\[ f \colon X \to Y \komma x \mapsto
\begin{cases} c & \text{if $x \notperp b$,} \\ 0 & \text{otherwise.} \end{cases} \]
Then both
\[ g \colon Y \to X \komma y \mapsto
\begin{cases} b & \text{if $y \notperp c$,} \\ 0 & \text{otherwise} \end{cases}
\quad\text{and}\quad
g' \colon Y \to X \komma y \mapsto
\begin{cases} b' & \text{if $y \notperp c$,} \\ 0 & \text{otherwise} \end{cases} \]
are adjoints of $f$.
\end{proof}

We next show that any adjointable map is continuous. In this case, we regard the involved orthosets as closure spaces. Continuity means that the membership of an element in the closure of some set is preserved \cite{Ern}.

\begin{lemma} \label{lem:adjointness-continuity}
Let $f \colon X \to Y$ be an adjointable map between orthosets. Then we have:
\begin{itemize}

\item[\rm (i)] $f(0) = 0$.

\item[\rm (ii)] For any $A \subseteq X$, we have $f(A)\cc = f(A\cc)\cc$. Consequently,
\[ f(A\cc) \;\subseteq\; f(A)\cc \]
and in particular, $f(\{x_1,x_2\}\cc) \subseteq \{f(x_1),f(x_2)\}\cc$ for any $x_1, x_2 \in X$.

\item[\rm (iii)] If $A \subseteq Y$ is orthoclosed, so is $f^{-1}(A)$.
\end{itemize}
\end{lemma}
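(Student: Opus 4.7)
For part (i), the plan is immediate from the axioms: by (O3), $0\perp g(y)$ for every $y\in Y$, so adjointness yields $f(0)\perp y$ for every $y\in Y$. Specialising to $y=f(0)$ and applying (O2), we obtain $f(0)=0$. No obstacle.

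For part (ii), the key observation is that $g$ maps $f(A)\c$ into $A\c$: if $y\perp f(a)$ for every $a\in A$, then adjointness gives $g(y)\perp a$ for every $a\in A$, i.e.\ $g(y)\in A\c$. Now fix $x\in A\cc$ and $y\in f(A)\c$. Since $g(y)\in A\c$ and $x\in A\cc$, we have $x\perp g(y)$, and adjointness translates this to $f(x)\perp y$. As $y\in f(A)\c$ was arbitrary, $f(x)\in f(A)\cc$, establishing $f(A\cc)\subseteq f(A)\cc$. Since the right-hand side is orthoclosed, this upgrades to $f(A\cc)\cc\subseteq f(A)\cc$. The reverse inclusion $f(A)\cc\subseteq f(A\cc)\cc$ is trivial from $A\subseteq A\cc$. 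The consequence $f(A\cc)\subseteq f(A)\cc$ and its two-element specialisation follow.

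For part (iii), rather than argue that $f^{-1}(A)$ equals its own double orthocomplement, the plan is to prove the cleaner identity
\[ f^{-1}(A) \;=\; g(A\c)\c \]
for every orthoclosed $A\subseteq Y$; this settles (iii) at once, because any set of the form $B\c$ is orthoclosed. The identity unfolds from a single application of adjointness: $x\in f^{-1}(A)$ means $f(x)\in A=A\cc$, which is equivalent to $f(x)\perp y$ for every $y\in A\c$, and adjointness converts each such relation to $x\perp g(y)$, giving exactly $x\in g(A\c)\c$.

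There is no genuine obstacle: each item rests on one application of the adjointness biconditional, with (O2) and (O3) handling the edge case in (i). The one point meriting care is to keep track of which direction of the biconditional ``$f(x)\perp y$ iff $x\perp g(y)$'' is being invoked, and to use that $f(A)\cc$ and $g(A\c)\c$ are automatically orthoclosed so no further closure step is required.
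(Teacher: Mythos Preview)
Your proof is correct. Parts (i) and (ii) follow essentially the same idea as the paper, though the paper compresses (ii) into a single chain of equivalences, showing $y \perp f(A)$ iff $g(y) \perp A$ iff $g(y) \perp A\cc$ iff $y \perp f(A\cc)$, whence $f(A)\c = f(A\cc)\c$ directly; your version unpacks this into the intermediate observation $g(f(A)\c) \subseteq A\c$ and then argues one inclusion at a time, which is perfectly fine but slightly longer.

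Part (iii) is where you genuinely diverge. The paper derives (iii) from (ii): applying $f(B\cc) \subseteq f(B)\cc$ with $B = f^{-1}(A)$ yields $f(f^{-1}(A)\cc) \subseteq f(f^{-1}(A))\cc \subseteq A\cc = A$, so $f^{-1}(A)\cc \subseteq f^{-1}(A)$. Your approach instead establishes the explicit identity $f^{-1}(A) = g(A\c)\c$ by a direct appeal to adjointness, bypassing (ii) entirely. This is arguably cleaner and yields more: the formula exhibits $f^{-1}(A)$ concretely as an orthocomplement rather than merely certifying it is orthoclosed, and it foreshadows the Galois-connection statement~(\ref{fml:lattice-adjoint}) that appears later. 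The paper's route, on the other hand, has the virtue of making (iii) an immediate corollary of (ii), emphasising that continuity of $f$ in the closure-space sense is a consequence of the closure-preservation property already proved.
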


\begin{proof}
Let $g$ be an adjoint of $f$.

Ad (i): $0 \perp g(f(0))$ implies $f(0) \perp f(0)$.

Ad (ii): For any $y \in Y$, we have $y \perp f(A)$ iff $g(y) \perp A$ iff $g(y) \perp A\cc$ iff $y \perp f(A\cc)$. Hence $f(A)\c = f(A\cc)\c$ and the assertions follow.

Ad (iii): If $A \in {\mathsf C}(Y)$, then $f(f^{-1}(A)\cc) \subseteq f(f^{-1}(A))\cc \subseteq A\cc = A$ by part (ii). Hence $f^{-1}(A)\cc \subseteq f^{-1}(A)$, that is, $f^{-1}(A) \in {\mathsf C}(X)$.
\end{proof}

We wish to relate maps between orthosets to maps between the associated ortholattices. We start with a lemma on adjointable maps between ortholattices.

\begin{lemma} \label{lem:adjointable-maps-between-ortholattices}
Let $h \colon L \to M$ be a map between complete ortholattices. Assume that $h$, viewed as a map between orthosets, is adjointable. Then $h$ preserves the order. In fact, $h$ is sup-preserving.
\end{lemma}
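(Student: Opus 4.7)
The strategy is to translate adjointness into the language of the ortholattice orders and then exploit the fact that $a \perp b$ in an orthoposet means precisely $a \leq b\c$.

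Let $k \colon M \to L$ be an adjoint of $h$. For the order-preservation part, I would fix $a, a' \in L$ with $a \leq a'$ and compute, for any $b \in M$: if $h(a') \perp b$ then, by adjointness, $a' \perp k(b)$, i.e., $a' \leq k(b)\c$, so $a \leq k(b)\c$, so $a \perp k(b)$, so by adjointness again $h(a) \perp b$. Thus $\{h(a')\}\c \subseteq \{h(a)\}\c$, which in ortholattice terms reads $\downarrow\! h(a')\c \subseteq \downarrow\! h(a)\c$, hence $h(a')\c \leq h(a)\c$ and therefore $h(a) \leq h(a')$.

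For the supremum-preservation part, I fix a family $(a_i)_{i \in I}$ in $L$, let $a = \bigvee_i a_i$, and set $s = \bigvee_i h(a_i)$. The inequality $s \leq h(a)$ is immediate from order-preservation. For the converse, I want to show $h(a) \leq s$, which since $s = s\cc$ is equivalent to $h(a) \perp s\c$. By adjointness this is equivalent to $a \perp k(s\c)$, i.e.\ $a \leq k(s\c)\c$; since $a = \bigvee a_i$, this reduces to showing $a_i \leq k(s\c)\c$ for every $i$, equivalently $a_i \perp k(s\c)$. Applying adjointness once more, this is $h(a_i) \perp s\c$, which holds because $h(a_i) \leq s = s\cc$. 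Hence $h(a) \leq s$, and combined with the reverse inequality we obtain $h(a) = s = \bigvee_i h(a_i)$.

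No real obstacle arises; the only thing to be careful about is that on the ortholattices, orthogonality $x \perp y$ is by definition the statement $x \leq y\c$, so each appeal to adjointness has to be recast into an order inequality before it can be combined with the hypothesis $a \leq a'$ or $a = \bigvee a_i$. Since the order-preservation claim follows from the sup-preservation claim by taking two-element families (or can be read off the first argument above), I would present the order-preservation part first as a lemma-style warm-up and then use the same two-step ``push through $k$, collapse the supremum, pull back through $h$'' pattern for arbitrary joins.
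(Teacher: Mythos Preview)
Your proof is correct and uses the same underlying mechanism as the paper: push orthogonality through the adjoint $k$, use that $\bigvee_i a_i \perp c$ iff $a_i \perp c$ for all $i$, and pull back. The paper simply packages this more compactly: instead of proving the two inequalities $\bigvee_i h(a_i) \leq h(\bigvee_i a_i)$ and $h(\bigvee_i a_i) \leq \bigvee_i h(a_i)$ separately, it shows in one chain of biconditionals that $h(\bigvee_i a_i) \perp b \Leftrightarrow \bigvee_i h(a_i) \perp b$ for every $b \in M$, which in an ortholattice forces equality; order-preservation then follows as a special case rather than being proved first as a warm-up.
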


\begin{proof}
Let $k$ be an adjoint of $h$ and let $a_\iota \in L$, $\iota \in I$. Then for any $b \in M$, we have $h(\bigvee_\iota a_\iota) \perp b$ iff $\bigvee_\iota a_\iota \perp k(b)$ iff $a_\iota \perp k(b)$ for all $\iota \in I$ iff $h(a_\iota) \perp b$ for all $\iota \in I$ iff $\bigvee_\iota h(a_\iota) \perp b$. This shows that $h(\bigvee_\iota a_\iota) = \bigvee_\iota h(a_\iota)$.
\end{proof}

Given a map $f \colon X \to Y$ between orthosets, we define as follows the induced map between the associated ortholattices:
\begin{equation} \label{fml:Cf}
{\mathsf C}(f) \colon {\mathsf C}(X) \to {\mathsf C}(Y) \komma A \mapsto f(A)\cc.
\end{equation}
The following lemma shows that if $f$ is adjointable, so is ${\mathsf C}(f)$. Moreover, ${\mathsf C}(f)$ preserves arbitrary joins and its lattice adjoint is expressible by means of the orthoset adjoint.

\begin{lemma} \label{lem:lattice-adjoint}
Let $f \colon X \to Y$ and $g \colon Y \to X$ be an adjoint pair of maps between orthosets. Then the following holds:
\begin{itemize}
\item[\rm (i)] Seen as maps between orthosets, ${\mathsf C}(f)$ and ${\mathsf C}(g)$ are an adjoint pair. That is, for any $A \in {\mathsf C}(X)$ and $B \in {\mathsf C}(Y)$,
\begin{equation} \label{fml:OL-adjoint}
{\mathsf C}(f)(A) \perp B \quad\text{if and only if}\quad A \perp {\mathsf C}(g)(B).
\end{equation}
\item[\rm (ii)] ${\mathsf C}(f)$ is sup-preserving. That is, for any $A_\iota \in {\mathsf C}(X)$, $\iota \in I$,
\[ \textstyle f(\bigvee_\iota A_\iota)\cc \;=\; \bigvee_\iota f(A_\iota)\cc. \]
\item[\rm (iii)] For any $A \in {\mathsf C}(X)$ and $B \in {\mathsf C}(Y)$,
\begin{equation} \label{fml:lattice-adjoint}
f(A)\cc \subseteq B \quad\text{if and only if}\quad A \subseteq g(B\c)\c.
\end{equation}
\end{itemize}
\end{lemma}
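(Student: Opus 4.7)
The plan is to prove (i) directly from the element-level adjointness of $f$ and $g$, then obtain (ii) as an immediate consequence of (i) via Lemma~\ref{lem:adjointable-maps-between-ortholattices}, and finally derive (iii) from (i) by a substitution trick.

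For (i), I would unpack the meaning of orthogonality of orthoclosed subsets. The condition ${\mathsf C}(f)(A) \perp B$, i.e.\ $f(A)\cc \perp B$, says $f(A)\cc \subseteq B\c$, which (since $B\c$ is orthoclosed) is equivalent to $f(A) \subseteq B\c$, i.e.\ $f(x) \perp y$ for all $x \in A$ and $y \in B$. By elementwise adjointness this is equivalent to $x \perp g(y)$ for all such $x, y$, i.e.\ $g(B) \subseteq A\c$, which by the same reasoning is $g(B)\cc \subseteq A\c$, that is, $A \perp {\mathsf C}(g)(B)$. Throughout I use only that $A$ and $B$ are orthoclosed, the definition of orthogonality of subsets, and the elementwise adjointness from Definition~\ref{def:adjoint}. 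No subtle step is involved.

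For (ii), part (i) tells us that ${\mathsf C}(f) \colon {\mathsf C}(X) \to {\mathsf C}(Y)$, viewed as a map between orthosets derived from the complete ortholattices ${\mathsf C}(X)$ and ${\mathsf C}(Y)$, is adjointable with adjoint ${\mathsf C}(g)$. Lemma~\ref{lem:adjointable-maps-between-ortholattices} then immediately yields that ${\mathsf C}(f)$ preserves arbitrary suprema, which is the displayed identity.

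For (iii), I apply (i) with $B$ replaced by $B\c$, which is itself orthoclosed. Then $f(A)\cc \perp B\c$ is equivalent to $A \perp g(B\c)\cc$. Rewriting each side via the standard fact that $U \perp V$ means $U \subseteq V\c$, together with $B\cc = B$ and the triple-complement identity $U\ccc = U\c$ in an ortholattice, the left-hand side becomes $f(A)\cc \subseteq B$ and the right-hand side becomes $A \subseteq g(B\c)\c$, as desired.

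The whole argument is essentially bookkeeping between the elementwise orthogonality and the subset-level orthogonality inside the complete ortholattices ${\mathsf C}(X)$ and ${\mathsf C}(Y)$; the only point where one has to be attentive is the passage from $f(A)\cc \subseteq B\c$ to $f(A) \subseteq B\c$ (which works because $B\c \in {\mathsf C}(Y)$) and the analogous step for $g$. No genuine obstacle is expected.
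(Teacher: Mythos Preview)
Your proposal is correct and follows essentially the same route as the paper: part (i) is proved by the chain $f(A)\cc \perp B \Leftrightarrow f(A) \perp B \Leftrightarrow A \perp g(B) \Leftrightarrow A \perp g(B)\cc$, part (ii) is obtained from (i) via Lemma~\ref{lem:adjointable-maps-between-ortholattices}, and part (iii) is the specialisation of (i) with $B$ replaced by $B\c$. The only difference is that the paper phrases (i) as a four-step iff chain and calls (iii) a ``reformulation'', whereas you spell out each step in more detail.
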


\begin{proof}
Ad (i): We have $f(A)\cc \perp B$ iff $f(A) \perp B$ iff $A \perp g(B)$ iff $A \perp g(B)\cc$.

Ad (ii): This is clear from part (i) and Lemma~\ref{lem:adjointable-maps-between-ortholattices}.

Ad (iii): This is a reformulation of part (i).
\end{proof}

We shall now discuss the injectivity and surjectivity of adjointable maps.

We define the {\it kernel} and the {\it range} of a map $f \colon X \to Y$, respectively, by
\begin{align*}
\kernel f \;=\; & \{ x \in X \colon f(x) = 0 \}, \\
\image f \;=\; & \{ f(x) \colon x \in X \}.
\end{align*}
We say that $f$ has a {\it zero kernel} if $\kernel f = \{0\}$.

\begin{lemma} \label{lem:kernels-images}
Let $f \colon X \to Y$ and $g \colon Y \to X$ be an adjoint pair of maps between orthosets. Then the following holds:
\begin{itemize}

\item[\rm (i)] $\kernel f = (\image g)\c = g(\image f)\c$ and $\kernel g = (\image f)\c = f(\image g)\c$.

\item[\rm (ii)] $(\image f)\cc = f(\image g)\cc = f((\kernel f)\c)\cc$ and $(\image g)\cc = g(\image f)\cc = g((\kernel g)\c)\cc$.

\item[\rm (iii)] Assume that $\image f$ is orthoclosed. Assume moreover that $X$ is an atomistic Dacey space, ${\mathsf C}(X)$ has the covering property, and $Y$ is Fr\' echet. Then $\image f = f((\kernel f)\c)$.

\end{itemize} 
\end{lemma}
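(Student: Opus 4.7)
My plan is to dispatch (i) and (ii) by direct computations with the adjointness condition together with Lemma~\ref{lem:adjointness-continuity}(ii), reserving the substantive work for the geometric argument in (iii).

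For (i), unwinding definitions: $x \in \kernel f$ iff $f(x) \perp y$ for every $y \in Y$ (by (O2) and (O3)), which by adjointness is iff $x \perp g(y)$ for every $y \in Y$, that is, $x \in (\image g)\c$. The equality $(\image g)\c = g(\image f)\c$ has one inclusion immediate from $g(\image f) \subseteq \image g$; for the other, $x \perp g(f(x'))$ for every $x' \in X$ gives, with $x'=x$, $f(x) \perp f(x)$, forcing $f(x)=0$. The claims for $\kernel g$ follow by symmetry of adjoint pairs.

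For (ii), setting $A = \image g$ in Lemma~\ref{lem:adjointness-continuity}(ii) and invoking (i) yields $f(\image g)\cc = f((\image g)\cc)\cc = f((\kernel f)\c)\cc$. For $(\image f)\cc = f(\image g)\cc$ I would compute orthocomplements: if $y \in f(\image g)\c$ then $g(y) \perp g(y')$ for every $y'$, forcing $g(y)=0$ and hence $y \in \kernel g = (\image f)\c$; the converse inclusion is immediate from $f(\image g) \subseteq \image f$. The $g$-version follows by swapping roles.

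The content is in (iii). I would fix $y = f(x_0) \in \image f$ and produce some $x_1 \in (\kernel f)\c$ with $f(x_1) = y$. The case $y=0$ is trivial via $x_1 = 0$. Otherwise $x_0 \notin \kernel f$, so by Lemma~\ref{lem:atomistic}(e) the atom $\{x_0\}\cc$ of ${\mathsf C}(X)$ is not contained in the orthoclosed set $\kernel f$ (orthoclosed by (i)). The covering property then forces $\{x_0\}\cc \vee \kernel f$ to cover $\kernel f$, and the standard OML interval isomorphism $[\kernel f,\, \{x_0\}\cc \vee \kernel f] \cong [0, q]$ with $q := (\{x_0\}\cc \vee \kernel f) \wedge (\kernel f)\c$ makes $q$ an atom of ${\mathsf C}(X)$. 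Atomisticity of $X$ (Lemma~\ref{lem:atomistic}(d),(e)) then yields $q = \{x_1\}\cc$ for some $x_1 \in X\withoutzero \cap (\kernel f)\c$. The orthomodular identity $q \vee \kernel f = \{x_0\}\cc \vee \kernel f$, combined with sup-preservation of ${\mathsf C}(f)$ (Lemma~\ref{lem:lattice-adjoint}(ii)) and Lemma~\ref{lem:adjointness-continuity}(ii), finally produces $\{f(x_1)\}\cc = \{y\}\cc$. Since $Y$ is Fr\'echet and hence irredundant (Lemma~\ref{lem:Frechet}, Lemma~\ref{lem:irredundant}), this collapses to $f(x_1)=y$.

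The main obstacle is the atomicity of $q$ in step (iii). It rests on the OML interval isomorphism $[a,b] \cong [0, b \wedge a\c]$, whose validity hinges on orthogonal pairs in an orthomodular lattice generating a Boolean sublattice; the Dacey hypothesis is precisely what makes this available, and without it the construction collapses. A secondary but crucial point is that the Fr\'echet assumption on $Y$ is indispensable: the lattice calculation determines $f(x_1)$ only up to the equivalence $\prl$, and it is Fr\'echetness that collapses $\prl$ to equality.
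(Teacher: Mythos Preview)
Your proof is correct and follows essentially the same strategy as the paper's. Parts (i) and (ii) are handled identically in spirit; the paper compresses (ii) to ``follows from (i)'', which is exactly what your orthocomplement computation unwinds.

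In (iii) there is a small but worthwhile difference. The paper invokes \cite[Lemma~(30.7)]{MaMa} to obtain, for the atom $\{x\}\cc$ lying neither in $\kernel f$ nor in $(\kernel f)\c$, atoms $\{y\}\cc \subseteq (\kernel f)\c$ and $\{z\}\cc \subseteq \kernel f$ with $\{x\}\cc \subseteq \{y,z\}\cc$, and then pushes forward via ${\mathsf C}(f)$. You instead construct the needed atom $q = (\{x_0\}\cc \vee \kernel f) \wedge (\kernel f)\c$ directly from the covering property and the standard orthomodular interval isomorphism $[a,b] \cong [0,\, b \wedge a\c]$, then apply sup-preservation of ${\mathsf C}(f)$ to the identity $q \vee \kernel f = \{x_0\}\cc \vee \kernel f$. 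The two arguments are equivalent in content: the MaMa lemma is precisely this projection-of-an-atom statement in an AC orthomodular lattice. Your version has the advantage of being self-contained, while the paper's citation packages the case split ($x \perp \kernel f$ versus not) and the projection into one reference.
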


\begin{proof}
Ad (i): For any $x \in X$, we have $f(x) = 0$ iff $f(x) \perp Y$ iff $x \perp g(Y)$. Similarly, for any $x \in X$ we have $f(x) = 0$ iff $f(x) \perp f(X)$ iff $x \perp g(f(X))$. This shows the first two equalities and the remaining ones hold by symmetry.

Ad (ii): This follows from part (i).

Ad (iii): Let $x \in X$. We have to show that there is a $y \perp \kernel f$ such that $f(x) = f(y)$. This is clear if $x \in \kernel f$ or $x \perp \kernel f$. Assume that $x \notin \kernel f$ and $x \notperp \kernel f$. Note that then $\kernel f$ is neither $\{0\}$ nor $X$. By Lemma~\ref{lem:atomistic}, $\{x\}\cc = \class x \cup \{0\}$ is an atom of ${\mathsf C}(X)$. As ${\mathsf C}(X)$ is an AC orthomodular lattice, there are by \cite[Lemma~(30.7)]{MaMa} proper elements $y \in (\kernel f)\c$ and $z \in \kernel f$ such that $\{x\}\cc \subseteq \{y,z\}\cc$. By Lemmas~\ref{lem:lattice-adjoint}(ii) and~\ref{lem:Frechet}, we have $\{f(x),0\} = f(\class x \cup \{0\}) = f(\{x\}\cc) \subseteq f(\{y,z\}\cc) \subseteq f(\{y\}\cc)\cc \vee f(\{z\}\cc)\cc = f(\class y \cup \{0\})\cc \vee f(\class z \cup \{0\})\cc = \{f(y),0\}$ and hence $f(x) = f(y)$ as desired.
\end{proof}

\begin{lemma} \label{lem:injective-surjective}
Let $f \colon X \to Y$ and $g \colon Y \to X$ be an adjoint pair of maps between orthosets. The following statement {\rm (a)} implies {\rm (b)}, and {\rm (b)} implies {\rm (c)}:
\begin{itemize}

\item[\rm (a)] $f$ is injective and $\image g$ is orthoclosed.

\item[\rm (b)] $f$ has a zero kernel and $\image g$ is orthoclosed.

\item[\rm (c)] $g$ is surjective.

\end{itemize}
If $X$ is irredundant, {\rm (a)}, {\rm (b)}, and {\rm (c)} are pairwise equivalent.
\end{lemma}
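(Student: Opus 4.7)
The plan is to use Lemma~\ref{lem:kernels-images}(i), which asserts $\kernel f = (\image g)\c$, as the principal engine, together with Lemma~\ref{lem:adjointness-continuity}(i) for the base fact that $f(0) = 0$.

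The implication (a) $\Rightarrow$ (b) is essentially immediate: since $f(0) = 0$ by Lemma~\ref{lem:adjointness-continuity}(i), any injective $f$ satisfies $\kernel f \subseteq \{0\}$, hence $\kernel f = \{0\}$. For (b) $\Rightarrow$ (c), I would start from $\kernel f = \{0\}$ and rewrite this via Lemma~\ref{lem:kernels-images}(i) as $(\image g)\c = \{0\}$. Taking orthocomplements then gives $(\image g)\cc = X$, and combined with the hypothesis that $\image g$ is orthoclosed this yields $\image g = X$, i.e.\ $g$ is surjective.

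For the irredundant case, it suffices to verify (c) $\Rightarrow$ (a). Surjectivity of $g$ directly delivers $\image g = X$, which is trivially orthoclosed. For the injectivity of $f$, I would suppose $f(x_1) = f(x_2)$; then by adjointness, for every $y \in Y$,
\[
x_1 \perp g(y) \;\iff\; f(x_1) \perp y \;\iff\; f(x_2) \perp y \;\iff\; x_2 \perp g(y).
\]
Since $g$ is surjective, every element of $X$ arises as some $g(y)$, hence $\{x_1\}\c = \{x_2\}\c$. If both $x_1, x_2$ are proper, Lemma~\ref{lem:irredundant}(b) concludes $x_1 = x_2$. The remaining case that exactly one of them equals $0$ cannot occur, since $\{0\}\c = X$ would force the other element to be self-orthogonal and hence to equal $0$ by axiom~(O2).

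I do not expect any substantial obstacle. The only point of care is the treatment of the falsity element in the last step, since Lemma~\ref{lem:irredundant}(b) is formulated only for proper elements; but this is easily dispatched using axiom~(O2) as above.
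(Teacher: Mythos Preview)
Your proposal is correct and follows essentially the same route as the paper: both use Lemma~\ref{lem:kernels-images}(i) for (b)~$\Rightarrow$~(c), and for (c)~$\Rightarrow$~(a) both argue via adjointness and surjectivity of $g$ that $\{x_1\}\c = \{x_2\}\c$ (i.e.\ $x_1 \prl x_2$), then invoke irredundancy. Your explicit handling of the case where one of $x_1, x_2$ equals $0$ is a minor refinement of presentation only; the paper absorbs this into the equivalence relation $\prl$.
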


\begin{proof}
Clearly, (a) implies (b). Moreover, if $\kernel f = \{0\}$ and $\image g \in {\mathsf C}(Y)$, then $\image g = (\kernel f)\c = X$ by Lemma~\ref{lem:kernels-images}(i), that is, $g$ is surjective. Hence (b) implies (c).

Assume that $X$ is irredundant and $g$ is surjective. Let $x_1, x_2 \in X$ be such that $f(x_1) = f(x_2)$. For any $x \in X$, $x_1 \perp x$ implies that $x_1 \perp g(y)$ for some $y \in Y$ such that $g(y) = x$, hence $f(x_2) = f(x_1) \perp y$, and $x_2 \perp g(y) = x$. Similarly, $x_2 \perp x$ implies $x_1 \perp x$, hence we have $x_1 \prl x_2$. By irredundancy, we conclude $x_1 = x_2$, and it follows that $f$ is injective.
\end{proof}

Let $f \colon X \to Y$ be adjointable. Restricting the domain of $f \colon X \to Y$ to the subspace $(\kernel f)\c$ of $X$ and the codomain to the subspace $(\image f)\cc$ of $Y$, we get the map
\[ \zerokernel{f} \colon (\kernel f)\c \to (\image f)\cc \komma x \mapsto f(x), \]
which has a zero kernel. We call $\zerokernel{f}$ the {\it zero-kernel restriction} of $f$. Let $g \colon Y \to X$ be an adjoint of $f$. By Lemma~\ref{lem:kernels-images}(i), $((\image g)\cc, \kernel f)$ is a decomposition of $X$ and $((\image f)\cc, \kernel g)$ is a decomposition of $Y$. Moreover, $\zerokernel f$ and $\zerokernel g$ form an adjoint pair of maps between the subspace $(\image g)\cc$ of $X$ and the subspace $(\image f)\cc$ of $Y$.

\begin{lemma} \label{lem:kernel-and-image}
Let $f \colon X \to Y$ and  $g \colon Y \to X$ be an adjoint pair of maps between orthosets. Assume that $\image f$ and $\image g$ are orthoclosed. Then $(\image g, \kernel f)$ is a decomposition of $X$, $(\image f, \kernel g)$ is a decomposition of $Y$, and $\zerokernel f$ and $\zerokernel g$ form an adjoint pair of maps between the subspaces $\image g$ and $\image f$.

Assume, in addition, that $X$ and $Y$ are Fr\' echet Dacey spaces and that ${\mathsf C}(X)$ and ${\mathsf C}(Y)$ have the covering property. Then $\zerokernel f$ and $\zerokernel g$ are bijections.
\end{lemma}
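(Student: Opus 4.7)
The first part is largely bookkeeping built on Lemma~\ref{lem:kernels-images}(i), which gives $\kernel f = (\image g)\c$ and $\kernel g = (\image f)\c$. Since $\image g$ and $\image f$ are by hypothesis orthoclosed, taking orthocomplements yields $\image g = (\kernel f)\c$ and $\image f = (\kernel g)\c$, so $(\image g, \kernel f)$ is a decomposition of $X$ and $(\image f, \kernel g)$ is a decomposition of $Y$. Consequently, the subspace $(\kernel f)\c$ of $X$ described in the paragraph preceding the lemma coincides with $\image g$, and $(\image f)\cc$ coincides with $\image f$; the restrictions $\zerokernel f$ and $\zerokernel g$ therefore live precisely between $\image g$ and $\image f$, and the adjointness relation for the pair $(\zerokernel f, \zerokernel g)$ is just the restriction of the one for $(f,g)$.

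For the second part the plan has two halves: surjectivity and injectivity. For surjectivity, I would apply Lemma~\ref{lem:kernels-images}(iii) directly to the original $f$. Its hypotheses are satisfied: $\image f$ is orthoclosed by assumption; $X$ is Fr\'echet, hence atomistic by Lemma~\ref{lem:Frechet}, and Dacey with ${\mathsf C}(X)$ enjoying the covering property; and $Y$ is Fr\'echet. The conclusion $\image f = f((\kernel f)\c) = f(\image g)$ then states exactly that $\zerokernel f \colon \image g \to \image f$ is surjective. By symmetry, the same argument applied to $g$ shows that $\zerokernel g$ is surjective.

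For injectivity, the idea is to feed the adjoint pair $(\zerokernel f, \zerokernel g)$ into Lemma~\ref{lem:injective-surjective}, whose equivalence (c)$\Leftrightarrow$(a) needs irredundancy of the domain. Here the domain $\image g$ of $\zerokernel f$ is a subspace of the Fr\'echet, hence irredundant, Dacey space $X$, and by Lemma~\ref{lem:subspaces-of-Dacey-spaces}(iv) it is itself irredundant. Since $\zerokernel g$ is surjective and $\image(\zerokernel g) = \image g$ is the whole codomain, hence trivially orthoclosed in $\image g$, condition (c) of Lemma~\ref{lem:injective-surjective} holds, and irredundancy pushes us to condition (a): $\zerokernel f$ is injective. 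Swapping the roles of $f$ and $g$ gives injectivity of $\zerokernel g$, so both restrictions are bijections.

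The main obstacle I anticipate is keeping the chain of inherited properties straight when passing between $X$, $Y$ and the relevant subspaces and restrictions: one has to read ``Fr\'echet Dacey with covering property on its lattice of subspaces'' exactly as the combination ``irredundant $+$ atomistic $+$ orthomodular lattice with the covering property'' that Lemmas~\ref{lem:kernels-images}(iii) and~\ref{lem:injective-surjective} actually consume, and then invoke Lemma~\ref{lem:subspaces-of-Dacey-spaces} to verify each of these survives when we restrict attention to $\image f$ or $\image g$. Once that dictionary is in place, each step reduces to a direct citation.
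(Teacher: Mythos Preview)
Your proposal is correct and follows essentially the same route as the paper: the first part is dispatched via Lemma~\ref{lem:kernels-images}(i) and the remarks preceding the lemma, surjectivity of $\zerokernel f$ and $\zerokernel g$ comes from Lemma~\ref{lem:kernels-images}(iii), and injectivity is obtained from Lemma~\ref{lem:injective-surjective} after noting that $\image f$ and $\image g$ are irredundant by Lemma~\ref{lem:subspaces-of-Dacey-spaces}(iv). The paper's proof is terser but uses precisely these three citations in the same order.
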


\begin{proof}
The first part clear from the preceding remarks.

Under the additional assumptions, $\zerokernel f$ and $\zerokernel g$ are surjective by Lemma~\ref{lem:kernels-images}(iii). Moreover, $\image f$ and $\image g$ are irredundant by Lemma~\ref{lem:subspaces-of-Dacey-spaces}(iv), hence $\zerokernel f$ and $\zerokernel g$ are injective by Lemma~\ref{lem:injective-surjective}.
\end{proof}

Let us finally discuss subspaces that are, together with their orthocomplement, invariant for a given map.

Let $f \colon X \to X$ be a map of an orthoset to itself. We call a subspace $A$ of $X$ {\it reducing} for $f$ if $f(A) \subseteq A$ and $f(A\c) \subseteq A\c$. In case when every subspace $A$ of $X$ is reducing for $f$, we call $f$ {\it scalar}.

\begin{lemma} \label{lem:reducing-subspaces}
Let $X$ be an orthoset and let $f \colon X \to X$ be adjointable.
\begin{itemize}

\item[\rm (i)] Let $A \in {\mathsf C}(X)$ and let $g \colon X \to X$ be an adjoint of $f$. The following are equivalent:
\begin{itemize}

\item[\rm (a)] $A$ is reducing for $f$;

\item[\rm (b)] $A$ is reducing for $g$;

\item[\rm (c)] $f(A) \subseteq A$ and $g(A) \subseteq A$.
\end{itemize}

\item[\rm (ii)] The set $\mathsf R$ of all subspaces of $X$ that are reducing for $f$ is closed under arbitrary meets and joins as well as the orthocomplementation. In particular, $\mathsf R$ is a subortholattice of ${\mathsf C}(X)$.

\end{itemize}
\end{lemma}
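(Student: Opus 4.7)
The plan is to handle part (i) by first proving (a) $\Leftrightarrow$ (c) and then deducing (b) $\Leftrightarrow$ (c) from the symmetry of adjointness. The key point for (a) $\Leftrightarrow$ (c) is that the extra condition $f(A\c) \subseteq A\c$ in (a) can be translated through the adjoint relation into the condition $g(A) \subseteq A$ in (c). Concretely, I would unfold $f(A\c) \subseteq A\c$ as: for every $x \in A\c$ and every $y \in A$, $f(x) \perp y$; applying the adjointness equivalence this becomes $x \perp g(y)$ for all such $x,y$, which says $g(y) \in A\cc$. Since $A \in {\mathsf C}(X)$, we have $A\cc = A$, so this is exactly $g(A) \subseteq A$, and the chain is reversible. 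Together with the common condition $f(A) \subseteq A$, this gives (a) $\Leftrightarrow$ (c).

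For (b) $\Leftrightarrow$ (c), I would simply apply the equivalence just established to the adjoint pair $(g,f)$ in place of $(f,g)$: $A$ is reducing for $g$ iff $g(A) \subseteq A$ and $f(A) \subseteq A$, which is again (c).

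For part (ii), I would adopt (c) as the working criterion. Closure of $\mathsf R$ under orthocomplementation is immediate from the symmetric form (a), since the pair of conditions $f(A) \subseteq A$ and $f(A\c) \subseteq A\c$ is manifestly invariant under interchanging $A$ and $A\c$. Closure under arbitrary meets is pointwise: if each $A_\iota$ satisfies (c), then for $x \in \bigcap_\iota A_\iota = \bigwedge_\iota A_\iota$, both $f(x)$ and $g(x)$ lie in every $A_\iota$ and hence in the meet.

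The only step requiring a small extra ingredient is closure under joins, which I would obtain either by De Morgan (the orthocomplement of a join is the meet of the orthocomplements, so closure under joins follows from closure under meets and under $\c$), or directly via Lemma~\ref{lem:lattice-adjoint}(ii): since ${\mathsf C}(f)$ is sup-preserving, $f(\bigvee_\iota A_\iota)\cc = \bigvee_\iota f(A_\iota)\cc$, and for each reducing $A_\iota$ we have $f(A_\iota)\cc \subseteq A_\iota\cc = A_\iota$, whence $f(\bigvee_\iota A_\iota) \subseteq \bigvee_\iota A_\iota$; the same argument applies to $g$. I do not anticipate a genuine obstacle; the main thing to keep track of is the repeated use of $A\cc = A$ for orthoclosed $A$ when shuttling between the $f(A\c)$ and $g(A)$ descriptions.
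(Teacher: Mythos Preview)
Your proposal is correct and follows essentially the same line as the paper's proof. For part~(i) the paper also reduces to the single equivalence $f(A\c)\subseteq A\c \Leftrightarrow g(A)\subseteq A$ (written as $f(A\c)\perp A \Leftrightarrow A\c\perp g(A)$) and then invokes symmetry; for part~(ii) the paper likewise treats meets pointwise, uses Lemma~\ref{lem:lattice-adjoint}(ii) for joins, and invokes De~Morgan together with closure under $\c$, so your two suggested routes for joins coincide with the paper's argument.
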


\begin{proof}
Ad (i): We have $f(A\c) \subseteq A\c$ iff $f(A\c) \perp A$ iff $A\c \perp g(A)$ iff $g(A) \subseteq A$. Similarly, we see that $f(A) \subseteq A$ iff $g(A\c) \subseteq A\c$. The asserted equivalences follow.

Ad (ii): Let $A_\iota \in \mathsf R$, $\iota \in I$. Then obviously $f(\bigcap_\iota A_\iota) \subseteq \bigcap_\iota A_\iota$, and we have $f(\bigvee_\iota A_\iota) \subseteq \bigvee_\iota A_\iota$ by Lemma~\ref{lem:lattice-adjoint}(ii). In view of the De Morgan laws, we conclude that $\bigcap_\iota A_\iota, \bigvee_\iota A_\iota \in \mathsf R$. Moreover, $A \in \mathsf R$ clearly implies $A\c \in \mathsf R$.
\end{proof}

\begin{lemma} \label{lem:scalar-map}
Let $X$ be an orthoset and let $f \colon X \to X$ be adjointable. If $f \prl \id_X$, then $f$ is scalar. In case when $X$ is atomistic and irreducible and $f$ is not constant $0$, then also the converse holds.

\end{lemma}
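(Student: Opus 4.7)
The plan is to handle the two implications separately; only the reverse direction carries real content. The forward direction is immediate: if $f \prl \id_X$ then $f(x) \in \{x\}\cc$ for every $x \in X$, so for any $A \in {\mathsf C}(X)$ and $x \in A$ one has $\{x\}\cc \subseteq A$ and hence $f(x) \in A$; applying this to both $A$ and $A\c$ shows every subspace is reducing, so $f$ is scalar.

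For the converse, assume $X$ is atomistic and irreducible and $f$ is scalar with $f \not\equiv 0$. Fix an adjoint $g$ of $f$; by Lemma~\ref{lem:reducing-subspaces}(i), $g$ is scalar as well, and $g \not\equiv 0$ (else $f(x) \perp y$ holds trivially for all $x, y$, forcing $f \equiv 0$). By Lemma~\ref{lem:atomistic}(d), $\{x\}\cc = \class x \cup \{0\}$ for each $x \in X\withoutzero$, so scalarity applied to this atom forces $f(x), g(x) \in \class x \cup \{0\}$. Setting $K = \kernel f$ (orthoclosed by Lemma~\ref{lem:kernels-images}(i)) and noting that $K \neq X$, the goal is to prove $X = K \cup K\c$; by irreducibility and $K \neq X$, this will then force $K = \{0\}$, whence $f(x) \prl x$ for every $x \in X\withoutzero$, i.e., $f \prl \id_X$.

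The key step is to show that $f(x) \neq 0$ implies $x \in K\c$. Given such an $x$, the containment $f(x) \in \class x \cup \{0\}$ yields $f(x) \prl x$, and $g(f(x)) \in \class{f(x)} \cup \{0\}$. Since $g(f(x)) = 0$ would imply $f(x) \perp f(x)$, contradicting $f(x) \neq 0$, we have $g(f(x)) \prl f(x) \prl x$; hence $\{x\}\cc = \{g(f(x))\}\cc$, and because $g(f(x)) \in \image g \subseteq (\image g)\cc$, we conclude $\{x\}\cc \subseteq (\image g)\cc = K\c$ by Lemma~\ref{lem:kernels-images}(i); in particular $x \in K\c$.

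The main obstacle is that scalarity of $f$ by itself only pins $f(x)$ inside $\class x \cup \{0\}$ on each individual atom, with no direct way to exclude $\kernel f$ from being a proper non-trivial subspace. The decisive move is to exploit the adjoint $g$, which inherits scalarity by Lemma~\ref{lem:reducing-subspaces}(i); the element $g(f(x))$ serves as a witness equivalent to $x$ that lies in $\image g$, embedding the whole atom $\{x\}\cc$ into $(\kernel f)\c$, after which irreducibility closes the argument.
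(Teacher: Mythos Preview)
Your proof is correct and follows essentially the same strategy as the paper: use atomisticity to get $f(x) \in \class x \cup \{0\}$, deduce $X = \kernel f \cup (\kernel f)\c$, and invoke irreducibility to force $\kernel f = \{0\}$. The only minor difference is in verifying $x \in (\kernel f)\c$ when $f(x)\neq 0$: the paper argues directly that $x \in (\image f)\cc = f((\kernel f)\c)\cc \subseteq (\kernel f)\c$ using scalarity of $f$ on the subspace $(\kernel f)\c$, whereas you route through the adjoint via $x \prl g(f(x)) \in \image g$ and $(\image g)\cc = (\kernel f)\c$.
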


\begin{proof}
Let $f \prl \id_X$. For any $A \in {\mathsf C}(X)$, we have $f(A) \subseteq \bigcup \{ \class{f(x)} \colon x \in A \} = \bigcup \{ \class x \colon x \in A \} = A$. Hence $f$ is scalar.

Assume now that $X$ is atomistic and irreducible, and let $f \colon X \to X$ be scalar and not constant $0$. For $x \in X$, we then have $f(x) \in f(\{x\}\cc) \subseteq \{x\}\cc = \class x \cup \{0\}$. Hence either $f(x) = 0$ or $f(x) \prl x$. Consequently, $x \notin \kernel f$ implies $x \in (\image f)\cc = f((\kernel f)\c)\cc \subseteq (\kernel f)\c$ by Lemma~\ref{lem:kernels-images}(ii), and it follows $X = \kernel f \cup (\kernel f)\c$. As $X$ is irreducible and $\kernel f \neq X$, we conclude $\kernel f = \{0\}$. This shows that $f \prl \id_X$.
\end{proof}

\section{Orthometric correspondences}
\label{sec:Orthometric-correspondences}

An obvious condition to consider when discussing maps between orthosets is the preservation of the orthogonality relation. The concept of adjointability of maps, which we introduced in the previous section, might seem at first sight to be unrelated to this issue. This is, however, a mistaken view: a bijective map preserves the orthogonality relation in both directions exactly if its inverse is an adjoint.

Apart from bijective correspondences, we study in this section partial orthometries. Our definition is chosen in analogy to partial isometries between Hilbert spaces. Basic examples include the inclusion maps $\iota \colon A \to X$, where $A$ is a subspace of an orthoset $X$, as well as the Sasaki maps, which are adjoints of inclusion maps. This issue will be of particular importance in the sequel, especially for the description of Dacey spaces.

We say that a map $f \colon X \to Y$ between orthosets {\it preserves} the orthogonality relation if, for any $x_1, x_2 \in X$, $x_1 \perp x_2$ implies $f(x_1) \perp f(x_2)$. We say that $f$ {\it reflects} $\perp$ if, for any $x_1, x_2 \in X$, $f(x_1) \perp f(x_2)$ implies $x_1 \perp x_2$.

\begin{lemma} \label{lem:perp-preserving-reflecting}
Let $f \colon X \to Y$ and $g \colon Y \to X$ be an adjoint pair of maps between orthosets. Then $f$ preserves and reflects $\perp$ if and only if $g \circ f \prl \id$.
\end{lemma}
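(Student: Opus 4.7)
The plan is to unpack both sides of the equivalence in terms of the orthogonality relation and then toggle between $X$ and $Y$ via the defining property of the adjoint pair. Recall that $g \circ f \prl \id$ means that for each $x \in X$ we have $\{g(f(x))\}\c = \{x\}\c$, i.e.\ for every $z \in X$, $\,g(f(x)) \perp z$ iff $x \perp z$.

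For the forward direction, I assume that $f$ preserves and reflects $\perp$. Fix $x \in X$ and take any $z \in X$. Using adjointness in the form $z \perp g(y) \iff f(z) \perp y$ with $y = f(x)$, I get $g(f(x)) \perp z \iff f(x) \perp f(z)$. By the preservation/reflection hypothesis, the right-hand side is equivalent to $x \perp z$. Hence $\{g(f(x))\}\c = \{x\}\c$, so $g(f(x)) \prl x$ for every $x$, i.e.\ $g \circ f \prl \id$.

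For the converse, assume $g \circ f \prl \id$, and let $x_1, x_2 \in X$. Adjointness gives $f(x_1) \perp f(x_2) \iff x_1 \perp g(f(x_2))$. Since $g(f(x_2)) \prl x_2$, the latter is in turn equivalent to $x_1 \perp x_2$. Reading the chain in both directions yields that $f(x_1) \perp f(x_2)$ iff $x_1 \perp x_2$, so $f$ both preserves and reflects $\perp$.

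There is no real obstacle here: everything reduces to two one-line manipulations, namely applying the adjoint identity once to trade a $\perp$-statement in $Y$ for one in $X$, and then using $g \circ f \prl \id$ (or its consequence on orthocomplements) to eliminate the composition. The only mild subtlety is not to confuse $g \circ f \prl \id$ with $g \circ f = \id$: the statement only asserts pointwise equivalence under $\prl$, and the proof uses only this weaker information, as is necessary since adjoints are unique only up to $\prl$ by Lemma~\ref{lem:irredundancy-adjoints-1}.
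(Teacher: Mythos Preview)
Your proof is correct and follows essentially the same approach as the paper's: both arguments unpack the two conditions in terms of orthogonality statements in $X$ and use the adjoint identity $f(x) \perp y \iff x \perp g(y)$ (with $y = f(x')$) to pass between them. The paper compresses both directions into a single observation that ``$f$ preserves and reflects $\perp$'' and ``$g\circ f \prl \id$'' both amount to the same biconditional on $x \perp x'$, whereas you write out each implication separately; the underlying logic is identical.
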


\begin{proof}
For $f$ to preserve and reflect $\perp$ means that, for any $x, x' \in X$, $x \perp x'$ is equivalent to $f(x) \perp f(x')$. Furthermore, $g \circ f \prl \id$ means that, for any $x, x' \in X$, $x \perp x'$ is equivalent to $g(f(x)) \perp x'$. The assertion follows.
\end{proof}

Note that an orthoisomorphism between orthosets is the same as a bijection preserving and reflecting $\perp$.

\begin{proposition} \label{prop:unitary-maps}
Given a map $f \colon X \to Y$ between orthosets, the following are equivalent:
\begin{itemize}

\item[\rm (a)] $f$ is an orthoisomorphism;

\item[\rm (b)] $f$ is bijective and $f^{-1}$ is an adjoint of $f$;

\item[\rm (c)] $f$ is bijective and possesses an adjoint $g \colon Y \to X$ such that $g \circ f \prl \id_X$.

\item[\rm (d)] $f$ is bijective and adjointable, and $g \circ f \prl \id_X$ for any adjoint $g$ of $f$.

\end{itemize}
\end{proposition}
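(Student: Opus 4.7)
The plan is a cyclic implication (a) $\Rightarrow$ (b) $\Rightarrow$ (c) $\Rightarrow$ (d) $\Rightarrow$ (a), each step being short and leaning on earlier results.

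For (a) $\Rightarrow$ (b), I assume $f$ is a bijective orthoisomorphism, so $f^{-1}$ exists and $f$ both preserves and reflects $\perp$. For $x \in X$ and $y \in Y$, write $y = f(f^{-1}(y))$; then $f(x) \perp y$ iff $f(x) \perp f(f^{-1}(y))$ iff $x \perp f^{-1}(y)$, using reflection for one direction and preservation for the other. Hence $f^{-1}$ is an adjoint of $f$. The step (b) $\Rightarrow$ (c) is immediate: choose $g = f^{-1}$, so $g \circ f = \id_X$, which trivially satisfies $g \circ f \prl \id_X$.

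For (c) $\Rightarrow$ (d), I invoke Lemma~\ref{lem:irredundancy-adjoints-1}(i): any two adjoints of $f$ are equivalent under $\prl$. So if some adjoint $g$ satisfies $g \circ f \prl \id_X$ and $g'$ is any other adjoint, then $g'(y) \prl g(y)$ for all $y \in Y$, hence $g'(f(x)) \prl g(f(x)) \prl x$ for every $x \in X$, giving $g' \circ f \prl \id_X$ as required. Finally, for (d) $\Rightarrow$ (a), $f$ is bijective and adjointable, so $f(0) = 0$ by Lemma~\ref{lem:adjointness-continuity}(i); and for any adjoint $g$, the hypothesis $g \circ f \prl \id_X$ together with Lemma~\ref{lem:perp-preserving-reflecting} yields that $f$ preserves and reflects $\perp$. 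Combined with bijectivity, this is precisely the definition of an orthoisomorphism.

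There is no real obstacle here; the only subtle point is (c) $\Rightarrow$ (d), where one might worry that the adjoint $g$ singled out in (c) is not canonical. This is dispelled by noting that the relation $\prl$ on adjoints is transported through composition with $f$, so the property $g \circ f \prl \id_X$ is invariant under replacing $g$ by an equivalent adjoint. The rest is essentially bookkeeping around the symmetry of the adjointness relation and the basic properties of orthoisomorphisms.
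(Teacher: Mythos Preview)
Your proof is correct and takes essentially the same approach as the paper: both rely on Lemma~\ref{lem:irredundancy-adjoints-1}(i) to pass between (c) and (d), and on Lemma~\ref{lem:perp-preserving-reflecting} to recover the orthoisomorphism property. The only difference is organizational: you run a clean cycle (a) $\Rightarrow$ (b) $\Rightarrow$ (c) $\Rightarrow$ (d) $\Rightarrow$ (a), whereas the paper shows (a) $\Rightarrow$ (b) $\Rightarrow$ (c), then (c) $\Leftrightarrow$ (d), and closes with (c) $\Rightarrow$ (a); the content is the same.
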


\begin{proof}
(a) $\Rightarrow$ (b): Let $f$ be an orthoisomorphism. Then, for any $x \in X$ and $y \in Y$, we have $f(x) \perp y$ iff $x \perp f^{-1}(y)$. Hence $f^{-1}$ is an adjoint of $f$.

(b) $\Rightarrow$ (c) and (d) $\Rightarrow$ (c): These implications hold trivially.

(c) $\Rightarrow$ (d): This is clear from Lemma~\ref{lem:irredundancy-adjoints-1}(i).

(c) $\Rightarrow$ (a): This is clear from Lemma~\ref{lem:perp-preserving-reflecting}.
\end{proof}

An orthoisomorphism from an orthoset to itself is called an {\it orthoautomorphism}.

\begin{lemma} \label{lem:C-of-automorphism}
Let $X$ be an orthoset.
\begin{itemize}

\item[\rm (i)] For any orthoautomorphism $f$ of an orthoset $X$, $\,{\mathsf C}(f) \colon {\mathsf C}(X) \to {\mathsf C}(X) \komma A \mapsto f(A)$ is an automorphism of ${\mathsf C}(X)$. The assignment $f \mapsto {\mathsf C}(f)$ defines a homomorphism from the group of orthoautomorphisms of $X$ to the group of automorphisms of ${\mathsf C}(X)$. Its kernel consists exactly of the scalar orthoautomorphisms.

\item[\rm (ii)] Let $f \colon X \to X$ be adjointable. Then $f$ is a scalar orthoautomorphism if and only if $f$ is bijective and $f \prl \id_X$.

\end{itemize}
\end{lemma}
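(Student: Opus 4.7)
For part (i), my plan is to use Proposition~\ref{prop:unitary-maps} to observe that $f^{-1}$ serves as an adjoint of $f$. I would first establish the identity $f(A)\c = f(A\c)$ for arbitrary $A \subseteq X$; this is a direct calculation using bijectivity of $f$ together with the fact that $f$ preserves and reflects $\perp$. This yields two immediate payoffs: orthoclosed sets get mapped to orthoclosed sets, so formula~(\ref{fml:Cf}) simplifies to ${\mathsf C}(f)(A) = f(A)\cc = f(A)$ as asserted, and ${\mathsf C}(f)$ respects the orthocomplementation. Combining this with join-preservation from Lemma~\ref{lem:lattice-adjoint}(ii) and De Morgan gives that ${\mathsf C}(f)$ is an ortholattice homomorphism. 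Functoriality ${\mathsf C}(g \circ f) = {\mathsf C}(g) \circ {\mathsf C}(f)$ is immediate from $(g \circ f)(A) = g(f(A))$, and ${\mathsf C}(f^{-1})$ furnishes a two-sided inverse, making ${\mathsf C}(f)$ an automorphism.

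For the kernel identification, $f \in \ker({\mathsf C}(-))$ means $f(A) = A$ for every $A \in {\mathsf C}(X)$, which certainly forces $f$ to be scalar. Conversely, if $f$ is scalar then $f(A) \subseteq A$ for every $A \in {\mathsf C}(X)$; applying this to $A\c$ and using the identity $f(A\c) = f(A)\c$ established above transforms $f(A\c) \subseteq A\c$ into $A \subseteq f(A)$, whence equality.

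For part (ii), the backward implication is quickest: if $f$ is bijective with $f \prl \id_X$, then Lemma~\ref{lem:scalar-map} already delivers that $f$ is scalar, while the identity $\{f(x)\}\c = \{x\}\c$ allows one to chain $f(x_1) \perp f(x_2) \Leftrightarrow x_1 \perp f(x_2) \Leftrightarrow x_1 \perp x_2$, so $f$ preserves and reflects $\perp$, and Proposition~\ref{prop:unitary-maps} then classifies $f$ as an orthoautomorphism. For the forward direction, assume $f$ is a scalar orthoautomorphism. By part~(i) applied to the kernel, $f$ fixes every orthoclosed set; in particular $f(\{x\}\cc) = \{x\}\cc$ gives $f(x) \in \{x\}\cc$ and hence $\{f(x)\}\cc \subseteq \{x\}\cc$, while $f(\{f(x)\}\cc) = \{f(x)\}\cc$ combined with the bijectivity of $f$ shows $x = f^{-1}(f(x)) \in \{f(x)\}\cc$, giving the reverse inclusion. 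Hence $\{x\}\c = \{f(x)\}\c$ for every $x$, which is precisely $f \prl \id_X$.

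The main delicate step I anticipate is the kernel characterisation in part~(i): the definition of ``scalar'' (every subspace being reducing) supplies only the one-sided containments $f(A) \subseteq A$ and $f(A\c) \subseteq A\c$, so promoting these to the equality $f(A) = A$ relies essentially on the identity $f(A\c) = f(A)\c$ coming from $f$ being an orthoisomorphism. Once this hinge is in place, part~(ii) reduces to a clean bookkeeping argument, with the forward direction leveraging bijectivity twice — once on $\{x\}\cc$ and once on $\{f(x)\}\cc$ — to upgrade the inclusion $\{f(x)\}\cc \subseteq \{x\}\cc$ to equality.
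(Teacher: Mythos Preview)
Your proposal is correct and follows essentially the same line as the paper. The one notable deviation is in the kernel step of part~(i): where you upgrade $f(A)\subseteq A$ to equality via the identity $f(A^\perp)=f(A)^\perp$, the paper instead cites Lemma~\ref{lem:reducing-subspaces}(i) (reducing subspaces pass to adjoints) together with Proposition~\ref{prop:unitary-maps} to conclude $f^{-1}(A)\subseteq A$ directly; both routes are short and amount to the same underlying observation. In part~(ii) the paper argues scalarity directly from $f([x])=[x]$ rather than quoting Lemma~\ref{lem:scalar-map}, and for the forward direction works with $\{y\}^\perp$ rather than $\{x\}^{\perp\perp}$, but these are cosmetic differences.
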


\begin{proof}
Ad (i): Only the last assertion might need a comment. Let $f$ be an orthoautomorphism of $X$. Then ${\mathsf C}(f) = \id$ if and only if $f(A) = A$ for any $A \in {\mathsf C}(X)$. Thus any subspace is in this case reducing and $f$ is scalar. Conversely, if $f$ is scalar, we have $f(A) \subseteq A$ for any $A \in {\mathsf C}(X)$. By Proposition~\ref{prop:unitary-maps}, $f^{-1}$ is an adjoint of $f$, hence by Lemma~\ref{lem:reducing-subspaces} we also have $f^{-1}(A) \subseteq A$ and consequently $A \subseteq f(A)$ for any $A \in {\mathsf C}(X)$. That is, ${\mathsf C}(f) = \id$.

Ad (ii): Assume that $f$ is a scalar orthoautomorphism. By part (i), $f(\{y\}\c) = \{y\}\c$ and $f^{-1}(\{y\}\c) = \{y\}\c$ for any $y \in X$. For $x \in X$, we hence have that $x \perp y$ implies $f(x) \perp y$, which in turn implies $x \perp y$. That is, $f(x) \prl x$, and we conclude $f \prl \id_X$.

Conversely, assume that $f$ is bijective and $f \prl \id_X$. We then have for any $x, y \in X$ that $x \perp y$ iff $f(x) \perp y$ iff $f(x) \perp f(y)$, that is, $f$ is an orthoautomorphism. Moreover, $f(\class x) = \class x$ for any $x \in X$ and hence $f(A) = A$ for any $A \in {\mathsf C}(X)$, that is, $f$ is scalar.
\end{proof}

Let us now consider a more general type of orthometric correspondence. Let $f \colon X \to Y$ and $g \colon Y \to X$ be an adjoint pair such that $\image f$ and $\image g$ are orthoclosed. By Lemma~\ref{lem:kernel-and-image}, $X$ decomposes into $(\image g,\kernel f)$, $\,Y$ decomposes into $(\image f,\kernel g)$, and $f$ and $g$ restrict to the adjoint pair of map $\zerokernel f$ and $\zerokernel g$ between $\image g$ and $\image f$. We consider the case that these maps are orthoisomorphisms.

We call a map $f \colon X \to Y$ a {\it partial orthometry} if $f$ possesses an adjoint $g \colon Y \to X$ such that the following holds: there are subspaces $A$ of $X$ and $B$ of $Y$ such that $A\c = \ker f$, $B\c = \ker g$, and $f$ and $g$ establish mutually inverse orthoisomorphisms between $A$ and $B$. In this case, we call $g$ a {\it generalised inverse} of $f$. Clearly, in this case also $g$ is a partial orthometry, and $f$ is a generalised inverse of $g$.

For a map $f \colon X \to Y$ and sets $A \subseteq X$ and $\image f \subseteq B \subseteq Y$, we will denote by $f|_A^B$ the map $f$ restricted to $A$ and corestricted to $B$.

\begin{proposition} \label{prop:partial-isometry}
Let $f \colon X \to Y$ and $g \colon Y \to X$ be an adjoint pair of maps between orthosets. Then the following are equivalent:
\begin{itemize}

\item[\rm (a)] $f$ is a partial orthometry and $g$ is a generalised inverse of $f$.

\item[\rm (b)] $\image g = (\ker f)\c$, $\image f = (\ker g)\c$, and $f|_{\image g}^{\image f}$ is an orthoisomorphism between the subspaces $\image g$ and $\image f$, whose inverse is $g|_{\image f}^{\image g}$.

\item[\rm (c)] $\zerokernel f$ and $\zerokernel g$ are mutually inverse bijections.

\item[\rm (d)] $\image f$ and $\image g$ are orthoclosed, and $f \circ g \circ f = f$ as well as $g \circ f \circ g = g$.

\end{itemize}
In this case, $\zerokernel f = f|_{\image g}^{\image f}$ and $\zerokernel g = g|_{\image f}^{\image g}$.
\end{proposition}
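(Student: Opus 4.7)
The plan is to prove the cycle $(a) \Rightarrow (b) \Rightarrow (c) \Rightarrow (d) \Rightarrow (a)$ and then to read off the concluding identifications $\zerokernel{f} = f|_{\image g}^{\image f}$ and $\zerokernel{g} = g|_{\image f}^{\image g}$ from the orthoclosedness of $\image f$ and $\image g$, a feature that will be seen to hold under all four equivalent conditions.

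For $(a) \Rightarrow (b)$, I would pick the witnessing subspaces $A$ and $B$ from the definition of partial orthometry and identify them with $\image g$ and $\image f$, respectively. On the one hand, $A = g(B) \subseteq \image g$ and $B = f(A) \subseteq \image f$. On the other hand, the orthoclosedness of $A$ and $B$ combined with $A\c = \ker f$ and $B\c = \ker g$ gives $A = (\ker f)\c$ and $B = (\ker g)\c$, so Lemma~\ref{lem:kernels-images}(ii) yields $(\image g)\cc = g((\ker g)\c)\cc = g(B)\cc = A\cc = A$ and analogously $(\image f)\cc = B$. Hence $\image g = A$ and $\image f = B$, from which (b) is immediate.

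The implication $(b) \Rightarrow (c)$ is essentially a rewriting: the orthoclosedness of $\image f$ and $\image g$ together with Lemma~\ref{lem:kernels-images}(i) gives $(\ker f)\c = \image g$ and $(\image f)\cc = \image f$, so by the very definition of the zero-kernel restriction we get $\zerokernel{f} = f|_{\image g}^{\image f}$ and analogously for $\zerokernel{g}$, and the mutual-inverse bijection claim transfers at once. For $(c) \Rightarrow (d)$, I would first note that since $\zerokernel{f}$ surjects onto $(\image f)\cc$ while factoring through $\image f$, we must have $\image f = (\image f)\cc$, and symmetrically for $\image g$. To verify $f \circ g \circ f = f$, I would fix an arbitrary $x \in X$, observe that $f(x) \in \image f = (\ker g)\c$ so that $g(f(x)) = \zerokernel{g}(f(x))$ lies in $\image g = (\ker f)\c$, and then conclude $f(g(f(x))) = \zerokernel{f}(\zerokernel{g}(f(x))) = f(x)$ because $\zerokernel{f}$ and $\zerokernel{g}$ are mutually inverse; the identity $g \circ f \circ g = g$ follows by symmetry.

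For the closing implication $(d) \Rightarrow (a)$, I would take $A = \image g$ and $B = \image f$, which are subspaces by hypothesis, and invoke Lemma~\ref{lem:kernels-images}(i) to obtain $A\c = \ker f$ and $B\c = \ker g$. The assumed relations $f \circ g \circ f = f$ and $g \circ f \circ g = g$ at once make $f|_A^B$ and $g|_B^A$ mutually inverse bijections, and these restrictions inherit the adjoint pair property from $f$ and $g$, so Proposition~\ref{prop:unitary-maps} promotes $f|_A^B$ to an orthoisomorphism with inverse $g|_B^A$, giving (a). I expect the main subtlety to sit in $(a) \Rightarrow (b)$: the partial orthometry hypothesis controls $f$ only through the witnessing pair $(A, A\c)$, and closing the loop to show $\image g = A$ requires the double-perp identity $(\image g)\cc = g((\ker g)\c)\cc$ from Lemma~\ref{lem:kernels-images}(ii) to rule out spurious values of $g$ outside $A$.
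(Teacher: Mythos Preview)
Your proof is correct and follows essentially the same strategy as the paper's, the only difference being that you traverse the cycle in the opposite direction: the paper proves $(a)\Rightarrow(d)\Rightarrow(c)\Rightarrow(b)\Rightarrow(a)$, while you prove $(a)\Rightarrow(b)\Rightarrow(c)\Rightarrow(d)\Rightarrow(a)$. The individual steps rely on the same ingredients (Lemma~\ref{lem:kernels-images}, the adjoint-pair property of $\zerokernel f$ and $\zerokernel g$, and Proposition~\ref{prop:unitary-maps}); in particular, your identification $\image g = A$ in $(a)\Rightarrow(b)$ via Lemma~\ref{lem:kernels-images}(ii) is the mirror image of the paper's use of Lemma~\ref{lem:kernels-images}(i) in its $(a)\Rightarrow(d)$.
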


\begin{proof}
(a) $\Rightarrow$ (d): Let $f$, $g$, $A$, and $B$ as indicated in the definition of a partial orthometry. Then $B \subseteq \image f \subseteq (\ker g)\c = B$ by Lemma~\ref{lem:kernels-images}(i), that is, $B = \image f$. Similarly, we see that $A = \image g$. In particular, $\image f$ and $\image g$ are orthoclosed and we have $f \circ g \circ f = f$ and $g \circ f \circ g = g$.

(d) $\Rightarrow$ (c): Let (d) hold. By Lemma~\ref{lem:kernel-and-image}, $\zerokernel f$ and $\zerokernel g$ are maps between $\image g$ and $\image f$. Moreover, $g \circ f \circ g = g$ and $f \circ g \circ f = f$ imply that $\zerokernel f$ and $\zerokernel g$ are mutually inverse bijections.

(c) $\Rightarrow$ (b): Assuming (c), we have that $\zerokernel f$ and $\zerokernel g$ are mutually inverse bijections between $(\image g)\cc$ and $(\image f)\cc$. It follows that $\image f$ and $\image g$ are orthoclosed and hence $\image g = (\ker f)\c$ and $\image f = (\ker g)\c$. By Lemma~\ref{lem:kernel-and-image}, $\zerokernel g = g|_{\image f}^{\image g}$ is an adjoint of $\zerokernel f = f|_{\image g}^{\image f}$. By Proposition~\ref{prop:unitary-maps}, $\zerokernel f$ is an orthoisomorphism. This shows (b) as well as the last assertion.

(b) $\Rightarrow$ (a): This is obvious.
\end{proof}

We call an injective partial orthometry an {\it orthometry}. That is, $f \colon X \to Y$ is an orthometry if there is a subspace $B$ of $Y$ and $f$ possesses an adjoint $g \colon Y \to X$ such that $\kernel g = B\c$, and $f|^B$ and $g|_B$ are mutually inverse orthoisomorphisms between $X$ and $B$. Similarly, we call a surjective partial orthometry a {\it coorthometry}. That is, $f \colon X \to Y$ is a coorthometry if there is a subspace $A$ of $X$ and $f$ possesses an adjoint $g \colon Y \to X$ such that $\kernel f = A\c$, and $f|_A$ and $g|^A$ are mutually inverse orthoisomorphisms between $A$ and $Y$. Clearly, a generalised inverse of an orthometry is a coorthometry and vice versa. Note also that the generalised inverse of a coorthometry is uniquely determined. Finally, we may mention that a bijective partial isometry is the same as an orthoisomorphism.

\begin{proposition} \label{prop:isometry-coisometry}
Let $f \colon X \to Y$ and $g \colon Y \to X$ be an adjoint pair of maps between orthosets. Then the following are equivalent:
\begin{itemize}

\item[\rm (a)] $f$ is an orthometry and $g$ is a generalised inverse of $f$.

\item[\rm (b)] $g$ is coorthometry and $f$ is a generalised inverse of $g$.

\item[\rm (c)] $\image f = (\kernel g)\c$, and $f|^{\image f}$ is an orthoisomorphism between $X$ and the subspace $\image f$ of $Y$, whose inverse is $g|_{\image f}$.

\item[\rm (d)] $\image f$ is orthoclosed and $g \circ f = \id_X$.

\end{itemize}
In this case, $\zerokernel f = f|^{\image f}$ and $\zerokernel g = g|_{\image f}$.
\end{proposition}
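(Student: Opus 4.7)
The plan is to deduce this proposition from the preceding Proposition~\ref{prop:partial-isometry} by observing that in each of the conditions (a)--(d), the injectivity of $f$ (equivalently, the surjectivity of $g$) forces $\image g = X$, which then collapses the symmetric description of a partial orthometry to a one-sided one.

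I would first dispose of (a)~$\Leftrightarrow$~(b). If $f$ is a partial orthometry with generalised inverse $g$, then by Proposition~\ref{prop:partial-isometry}(b) we have $\image g = (\kernel f)\c$ and $\image f = (\kernel g)\c$. Hence $f$ is injective, i.e.\ $\kernel f = \{0\}$, if and only if $\image g = X$, i.e.\ $g$ is surjective. So $f$ being an orthometry with generalised inverse $g$ amounts to $g$ being a coorthometry with generalised inverse $f$.

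Next, for (a)~$\Rightarrow$~(c) I would apply Proposition~\ref{prop:partial-isometry}(b) with the additional information $\kernel f = \{0\}$: this gives $\image g = X$, and the orthoisomorphism $f|_{\image g}^{\image f}$ becomes $f|^{\image f}$, with inverse $g|_{\image f}^{\image g} = g|_{\image f}$. The implication (c)~$\Rightarrow$~(d) is immediate: $\image f = (\kernel g)\c$ is orthoclosed, and composing the orthoisomorphism $f|^{\image f}$ with its inverse $g|_{\image f}$ yields $g \circ f = \id_X$ on all of $X$.

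The only step with any substance is (d)~$\Rightarrow$~(a), but even here the key observation is short: from $g \circ f = \id_X$ we get $\image g \supseteq \image(g \circ f) = X$, so $\image g = X$ is orthoclosed automatically. Combined with the assumed orthoclosedness of $\image f$ and the identities $f \circ g \circ f = f \circ \id_X = f$ and $g \circ f \circ g = \id_X \circ g = g$, Proposition~\ref{prop:partial-isometry}(d) yields that $f$ is a partial orthometry with generalised inverse $g$. Since $g \circ f = \id_X$ forces $f$ to be injective, $f$ is in fact an orthometry. The final assertion about $\zerokernel f$ and $\zerokernel g$ then reads off directly from the last line of Proposition~\ref{prop:partial-isometry} once one substitutes $\image g = X$. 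I do not expect a genuine obstacle; the only subtle point is noticing that orthoclosedness of $\image g$ need not be assumed in (d), since it is already encoded in $g \circ f = \id_X$.
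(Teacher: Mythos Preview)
Your proposal is correct and follows the same route as the paper: both reduce the statement to Proposition~\ref{prop:partial-isometry} by noting that the conditions (a)--(d) here are precisely those of Proposition~\ref{prop:partial-isometry} together with $\kernel f = \{0\}$ (equivalently, $\image g = X$). Your write-up is in fact more explicit than the paper's, which compresses the whole argument into a single sentence; in particular, your observation that $g \circ f = \id_X$ already forces $\image g = X$ to be orthoclosed is exactly the point needed to see that condition~(d) here matches condition~(d) there.
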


\begin{proof}
Assume that $f$ is a partial orthometry and $g$ is a generalised inverse of $f$. Then $f$ is an orthometry if and only if $g$ is a coorthometry if and only if $\kernel f = \{0\}$. Hence the assertions follow from Proposition~\ref{prop:partial-isometry}.
\end{proof}

For Dacey spaces, we may characterise partial orthometries without reference to a specific adjoint.

\begin{proposition} \label{prop:partial-isometry-between-Dacey-spaces}
Let $f \colon X \to Y$ be an adjointable map between Dacey spaces. Then the following holds:
\begin{itemize}

\item[\rm (i)] $f$ is a partial orthometry if and only if there are subspaces $A$ of $X$ and $B$ of $Y$ such that $f(x) = 0$ if $x \perp A$, and $f$ establishes an orthoisomorphism between $A$ and $B$. In this case, $A = (\kernel f)\c$, $\,B = \image f$, and $\zerokernel f = f|_A^B$.

\item[\rm (ii)] $f$ is an orthometry if and only if $B = \image f$ is orthoclosed and $f|^B$ is an orthoisomorphism between $X$ and $B$. In this case, $B = \image f$ and $\zerokernel f = f|^B$.

\item[\rm (iii)] $f$ is a coorthometry if and only if there is a subspace $A$ of $X$ such that $f(x) = 0$ if $x \perp A$, and $f|_A$ is an orthoisomorphism between $A$ and $Y$. In this case, $A = (\kernel f)\c$ and $\zerokernel f = f|_A$.

\end{itemize}
\end{proposition}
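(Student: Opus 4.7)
The plan is to derive all three parts from Proposition~\ref{prop:partial-isometry} by exhibiting, in each case, an adjoint $g$ of $f$ satisfying its criterion (d). The ``only if'' direction of (i) is immediate: if $f$ is a partial orthometry with generalised inverse $g$, then $A := (\kernel f)\c = \image g$ and $B := (\kernel g)\c = \image f$ are orthoclosed, $f|_{A\c} \equiv 0$, and $\zerokernel f = f|_A^B$ is an orthoisomorphism. Parts (ii) and (iii) then follow by specialising (i): an orthometry is a partial orthometry with $\kernel f = \{0\}$, i.e., $A = X$, matching (ii); a coorthometry has $\image f = Y$, i.e., $B = Y$, matching (iii).

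For the ``if'' direction of (i), assume $A \subseteq X$ and $B \subseteq Y$ are subspaces with $f|_{A\c} \equiv 0$ and $f_1 := f|_A^B$ an orthoisomorphism, and fix an arbitrary adjoint $g_0$ of $f$. Since $A \vee A\c = X$ in ${\mathsf C}(X)$, sup-preservation (Lemma~\ref{lem:lattice-adjoint}(ii)) gives
\[ f(X)\cc \;=\; f(A)\cc \vee f(A\c)\cc \;=\; B \vee \{0\} \;=\; B, \]
so $\image f \subseteq f(X)\cc = B$, while $\image f \supseteq f_1(A) = B$ is immediate; hence $\image f = B$. Applying the adjointness biconditional with $x$ ranging once over $A$ and once over $A\c$ forces $g_0(B) \subseteq A$ and $g_0(B\c) \subseteq A\c$, so $g_0|_B \colon B \to A$ is an adjoint of $f_1$. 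Since $f_1$ is an orthoisomorphism, Proposition~\ref{prop:unitary-maps} and Lemma~\ref{lem:irredundancy-adjoints-1}(i) give $g_0(y) \prl f_1^{-1}(y)$ in $A$ for every $y \in B$, and the Dacey hypothesis now enters through Lemma~\ref{lem:subspaces-of-Dacey-spaces}(iii), upgrading this to equivalence in $X$. Consequently $(\image g_0)\cc = g_0(B)\cc = f_1^{-1}(B)\cc = A$ via Lemma~\ref{lem:kernels-images}(ii), which implies $\image g_0 \subseteq A$ and hence $g_0(B\c) \subseteq A \cap A\c = \{0\}$; Lemma~\ref{lem:kernels-images}(i) then delivers $\kernel f = (\image g_0)\c = A\c$.

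To obtain an adjoint whose image is orthoclosed, I define $g \colon Y \to X$ by $g(y) = f_1^{-1}(y)$ for $y \in B$ and $g(y) = g_0(y)$ for $y \notin B$. Since $f_1^{-1}(y) \prl g_0(y)$ on $B$ and $g$ agrees with $g_0$ elsewhere, $g \prl g_0$, so $g$ is an adjoint of $f$ by Lemma~\ref{lem:irredundancy-adjoints-1}(i). Moreover $g(B) = A$ and $g(Y \setminus B) \subseteq \image g_0 \subseteq A$ give $\image g = A$, and $\kernel g = (\image f)\c = B\c$ automatically. A direct case analysis on whether the argument lies in $B$ then shows $f \circ g \circ f = f$ and $g \circ f \circ g = g$, using $f_1 \circ f_1^{-1} = \id_B$ and $f_1^{-1} \circ f_1 = \id_A$. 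Proposition~\ref{prop:partial-isometry} therefore yields that $f$ is a partial orthometry with generalised inverse $g$ and $\zerokernel f = f|_A^B$.

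The main obstacle is two-fold. First, the naive prescription ``$g(y) = 0$ for $y \notin B$'' fails whenever $B \cup B\c \subsetneq Y$, so one must retain $g_0$'s values off $B$ to preserve adjointness. Second, the transfer of $g_0(y) \prl f_1^{-1}(y)$ from $A$ to $X$ is essential for the orthoclosure computation $g_0(B)\cc = A$ to go through; it is the Dacey hypothesis, through the identification $P_X(A) = P(A)$ of Lemma~\ref{lem:subspaces-of-Dacey-spaces}(iii), that renders this transfer possible.
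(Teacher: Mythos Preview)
Your proof is correct and follows essentially the same route as the paper's: fix an arbitrary adjoint, use Lemma~\ref{lem:subspaces-of-Dacey-spaces}(iii) (the Dacey hypothesis) to lift the equivalence $g_0(y) \prl f_1^{-1}(y)$ from the subspace $A$ to all of $X$, then replace $g_0$ on $B$ by $f_1^{-1}$ to obtain a genuine generalised inverse. The only minor difference is in bookkeeping: the paper first derives $\kernel f = A\c$ directly from orthomodularity (via $A\c \subseteq \kernel f$, $A \cap \kernel f = \{0\}$) and concludes by appealing to the definition of partial orthometry, whereas you deduce $\kernel f = A\c$ from $(\image g_0)\cc = A$ and finish by verifying criterion~(d) of Proposition~\ref{prop:partial-isometry}; both are equally valid.
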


\begin{proof}
Ad (i): The ``only if'' part holds by definition, hence we only have to show the ``if'' part.

Let $f$, $A$, and $B$ as indicated. Let $g$ be an adjoint of $f$. We have $A\c \subseteq \kernel f$ and $A \cap \kernel f = \{0\}$, hence $A\c = \kernel f$ by orthomodularity and $\image g \subseteq (\kernel f)\c = A$. Furthermore, $B \subseteq \image f$ implies $\kernel g = (\image f)\c \subseteq B\c$, and $f(A) = B$ means $f(A) \perp B\c$, hence $A \perp g(B\c) \subseteq A$, that is, $B\c \subseteq \kernel g$. Hence $B\c = \kernel g$. In particular, $\zerokernel f = f|_A^B$ and $\zerokernel g = g|_B^A$. We also have that $\image f \subseteq (\kernel g)\c = B \subseteq \image f$, that is, $B = \image f$.

By assumption, $\zerokernel f$ is an orthoisomorphism. As both ${\zerokernel f}^{-1}$ and $\zerokernel g$ are adjoints of $\zerokernel f$, we have ${\zerokernel f}^{-1} \prl \zerokernel g$ by Lemma~\ref{lem:irredundancy-adjoints-1}(i). By Lemma~\ref{lem:subspaces-of-Dacey-spaces}(iii), ${\zerokernel f}^{-1}(y) \prl g(y)$, where $y \in B$, also holds in $X$. We put $\tilde g \colon Y \to X \komma \footnotesize y \mapsto \begin{cases} {\zerokernel f}^{-1}(y) & \text{if $y \in B$}, \\ g(y) & \text{otherwise.} \end{cases}$ Then $\tilde g \prl g$, hence $\tilde g$ is an adjoint of $f$ as well, $\kernel \tilde g = \kernel g = B\c$, and $\tilde g|_B^A = (f|_A^B)^{-1}$. Now it is clear that $f$ is a partial orthometry.

Parts (ii) and (iii) follow as special cases from part (i).
\end{proof}

We note that the discussed properties of maps between orthosets are preserved by the transition to irredundant quotients.

\begin{lemma}
Let $f \colon X \to Y$ be a partial orthometry (orthometry, coorthometry, orthoisomorphism) between orthosets. Then so is $P(f) \colon P(X) \to P(Y)$.
\end{lemma}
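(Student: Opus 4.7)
The plan is to transfer each property through the quotient construction using Propositions~\ref{prop:quotient-map} and~\ref{prop:irredundant-quotient}. Let $g$ be a generalised inverse of $f$ in the partial orthometry case (and the unique adjoint in the orthoisomorphism case). Since $f$ is adjointable with adjoint $g$, Proposition~\ref{prop:quotient-map} ensures that $P(f)$ is adjointable with adjoint $P(g)$, so $P(g)$ becomes the natural candidate for a generalised inverse of $P(f)$.

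For the partial orthometry case, let $A \subseteq X$ and $B \subseteq Y$ be the subspaces with $A\c = \kernel f$, $B\c = \kernel g$, and $f|_A^B$, $g|_B^A$ mutually inverse orthoisomorphisms. By Proposition~\ref{prop:irredundant-quotient}(ii) applied to the binary decompositions $(A,A\c)$ and $(B,B\c)$, the sets $P_X(A)$ and $P_Y(B)$ are subspaces of $P(X)$ and $P(Y)$, with $P_X(A)\c = P_X(A\c)$ and $P_Y(B)\c = P_Y(B\c)$. I would next verify that $\kernel P(f) = P_X(A)\c$ by the direct observation that $P(f)(\class x) = \class 0$ iff $f(x)=0$ iff $x \in \kernel f = A\c$, and likewise $\kernel P(g) = P_Y(B)\c$. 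Since $f(A) \subseteq B$ and $g(B) \subseteq A$, the maps $P(f)$ and $P(g)$ restrict to $P_X(A) \to P_Y(B)$ and $P_Y(B) \to P_X(A)$; the identities $g \circ f|_A = \id_A$ and $f \circ g|_B = \id_B$ descend to $P$ and make these restrictions mutually inverse bijections; and since $\class x \perp \class{x'}$ in $P(X)$ iff $x \perp x'$ in $X$, the restriction of $P(f)$ inherits preservation and reflection of $\perp$ from $f|_A^B$. This establishes $P(f)$ as a partial orthometry with generalised inverse $P(g)$.

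The remaining cases reduce to injectivity and surjectivity of $P(f)$. If $f$ is an orthometry then $g \circ f = \id_X$ by Proposition~\ref{prop:isometry-coisometry}(d), and $g$ preserves $\prl$ by Proposition~\ref{prop:quotient-map}; hence $P(f)(\class{x_1}) = P(f)(\class{x_2})$, i.e.\ $f(x_1) \prl f(x_2)$, yields $x_1 = g(f(x_1)) \prl g(f(x_2)) = x_2$, so $\class{x_1} = \class{x_2}$ and $P(f)$ is injective. Dually, if $f$ is a coorthometry then $f$ is surjective (as $f(A) = Y$) and so every $\class y$ is of the form $P(f)(\class x)$. The orthoisomorphism case is the combination of both observations. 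I do not anticipate any genuine obstacle; the only care needed is in tracking how kernels, images, and the witnessing subspaces are transported under the quotient, as above.
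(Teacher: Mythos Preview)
Your argument is correct, but it takes a different route from the paper for the partial orthometry case. The paper invokes criterion~(d) of Proposition~\ref{prop:partial-isometry}: since $f$ and $g$ satisfy $f\circ g\circ f=f$, $g\circ f\circ g=g$, and have orthoclosed images, these conditions pass directly to $P(f)$ and $P(g)$ (the algebraic identities are immediate from $P(h_1\circ h_2)=P(h_1)\circ P(h_2)$, and orthoclosedness of the images transfers via Proposition~\ref{prop:irredundant-quotient}(i)), so $P(f)$ is a partial orthometry in one line. You instead unfold the original definition, transport the witnessing subspaces $A$, $B$ through the quotient using Proposition~\ref{prop:irredundant-quotient}(ii), and verify all the defining conditions by hand. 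Your approach is more explicit and self-contained, while the paper's is shorter because the algebraic characterisation~(d) was engineered precisely to make such transfers trivial.

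For the injective and surjective cases the paper is also slightly more economical: it observes that an injective $f$ has zero kernel, hence so does $P(f)$, and a partial orthometry with zero kernel is automatically injective (since $\zerokernel{P(f)}$ is then a bijection defined on all of $P(X)$); surjectivity of $P(f)$ is immediate from surjectivity of $f$. Your use of $g\circ f=\id_X$ to deduce injectivity of $P(f)$ is equally valid.
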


\begin{proof}
Let $f$ be a partial orthometry. Then so is $P(f)$ by criterion (d) of Proposition~\ref{prop:partial-isometry}. Moreover, if $f$ is injective, then $f$ has a zero kernel. Hence the partial orthometry $P(f)$ has likewise a zero kernel, which means that $P(f)$ is injective. Finally, if $f$ is surjective, so is $P(f)$.
\end{proof}

If we deal with an orthoset $X$ and a subspace $A$ of $X$, a more particular terminology seems to be in order. We refer to $\iota \colon A \to X \komma a \mapsto a$ as the {\it inclusion map} of $A$ (into $X$). If $\iota$ is an orthometry, then we call a generalised inverse of $\iota$ a {\it Sasaki map} (onto $A$).

\begin{lemma} \label{lem:isometric-inclusion}
Let $A$ be a subspace of the orthoset $X$ and let $\iota \colon A \to X$ be the inclusion map.
\begin{itemize}

\item[\rm (i)] For a map $\sigma \colon X \to A$, the following are equivalent:

\begin{itemize}

\item[\rm (a)] $\sigma$ is an adjoint of $\iota$.

\item[\rm (b)] For any $x \in X$,
\begin{equation} \label{fml:Sasaki-map}
\{ a \in A \colon a \perp x \} \;=\; \{ a \in A \colon a \perp \sigma(x) \}.
\end{equation}

\item[\rm (c)] For any $x \in X$,
\[ A\c \vee \{x\}\cc \;=\; A\c \vee \{\sigma(x)\}\cc. \]
\end{itemize}

\item[\rm (ii)] $\iota$ is an orthometry if and only if $\iota$ is adjointable. In this case, any adjoint of $\iota$ is equivalent to a Sasaki map onto $A$.

\item[\rm (iii)] A map $\sigma \colon X \to A$ is a Sasaki map if and only if $\sigma$ is an adjoint of $\iota$ such that $\sigma|_A = \id_A$.

\end{itemize}
\end{lemma}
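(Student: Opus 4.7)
The plan is to dispose of the three parts in order, with the main work concentrated in part (ii).

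For part (i), I would prove (a) $\Leftrightarrow$ (b) by simply unfolding the definition of adjointness: since $\iota(a) = a$ for every $a \in A$, the condition ``$\iota(a) \perp x$ iff $a \perp \sigma(x)$'' is literally the statement that, for each $x \in X$, the sets $\{a \in A \colon a \perp x\}$ and $\{a \in A \colon a \perp \sigma(x)\}$ agree. For (b) $\Leftrightarrow$ (c), I would observe that the two sides in (b) are $A \cap \{x\}\c$ and $A \cap \{\sigma(x)\}\c$, both orthoclosed in $X$, and apply De Morgan in the ortholattice ${\mathsf C}(X)$: taking orthocomplements transforms these intersections into the joins appearing in (c), and since the sets involved are orthoclosed, a second orthocomplementation recovers the original condition, giving the equivalence.

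For part (ii), the nontrivial direction is that mere adjointability of $\iota$ already forces $\iota$ to be an orthometry. Starting from any adjoint $\sigma$ of $\iota$, the plan is to repair it into a Sasaki map by defining
\[
\sigma' \colon X \to A, \qquad \sigma'(a) = a \text{ for } a \in A, \quad \sigma'(x) = \sigma(x) \text{ for } x \in X \setminus A.
\]
Using the characterisation (i)(b) just proved, checking that $\sigma'$ is still an adjoint of $\iota$ splits into two trivial cases: for $x \in A$, the orthogonality condition $a' \perp \sigma'(x) = x$ is a tautology, and for $x \notin A$ it is inherited from $\sigma$. Once $\sigma'$ is in hand, one has $\sigma' \circ \iota = \id_A$ and $\image \iota = A$ is orthoclosed by hypothesis, so criterion (d) of Proposition~\ref{prop:isometry-coisometry} makes $\iota$ an orthometry whose generalised inverse is $\sigma'$; that is, $\sigma'$ is a Sasaki map. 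The final clause, that every adjoint of $\iota$ is equivalent to $\sigma'$, is then immediate from Lemma~\ref{lem:irredundancy-adjoints-1}(i).

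For part (iii), I would invoke Proposition~\ref{prop:isometry-coisometry}(d) once more. A map $\sigma \colon X \to A$ is a Sasaki map exactly when $\image \iota$ is orthoclosed and $\sigma \circ \iota = \id_A$; the first condition is automatic and the second is precisely $\sigma|_A = \id_A$, which together with adjointness yields the stated characterisation.

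The point that deserves care, and that I expect to be the main obstacle, is the modification step in part (ii): an arbitrary adjoint $\sigma$ of $\iota$ need not satisfy $\sigma|_A = \id_A$, because (i)(b) only forces $\sigma(a)$ and $a$ to be equivalent inside $A$, not equal in $X$. Replacing $\sigma$ by $\sigma'$ is the device that restores $\sigma \circ \iota = \id_A$ on the nose while preserving adjointness, and this is what converts a bare adjoint of $\iota$ into a genuine Sasaki map.
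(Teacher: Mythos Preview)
Your proposal is correct and essentially identical to the paper's own proof: the paper also proves (i) by unfolding adjointness and passing to orthocomplements in ${\mathsf C}(X)$, handles (ii) by replacing an arbitrary adjoint $\sigma$ with the map $\tilde\sigma$ that agrees with $\sigma$ off $A$ and equals $\id_A$ on $A$ (noting, as you do, that (i)(b) only gives $\sigma(a)\prl a$ in $A$), and settles (iii) via Proposition~\ref{prop:isometry-coisometry}(d). The only cosmetic difference is that the paper verifies $\tilde\sigma$ is an adjoint by observing $\tilde\sigma\prl\sigma$ and invoking Lemma~\ref{lem:irredundancy-adjoints-1}(i), whereas you re-check criterion (i)(b) directly; both amount to the same one-line verification.
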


\begin{proof}
Ad (i): The following statements are equivalent: (a) holds; for any $a \in A$ and $x \in X$, $a \perp x$ is equivalent to $a \perp \sigma(x)$; (b) holds; for any $x \in X$, $\,\{x\}\c \cap A = \{\sigma(x)\}\c \cap A$; (c) holds.

Ad (ii): An orthometry is by definition adjointable. Conversely, assume that $\sigma$ is an adjoint of $\iota$. For any $a \in A$, we have $\{\sigma(a)\}\ce{A} = \{a\}\ce{A}$ by (\ref{fml:Sasaki-map}), that is, $\sigma(a) \prl a$ in $A$. We put $\tilde \sigma \colon X \to A \komma \footnotesize x \mapsto \begin{cases} x & \text{if $x \in A$}, \\ \sigma(x) & \text{otherwise.} \end{cases}$ Then $\tilde\sigma \prl \sigma$ and hence also $\tilde\sigma$ is an adjoint of $\iota$. Moreover, $\kernel \tilde\sigma = \kernel \sigma = (\image \iota)\c = A\c$, and $\tilde\sigma|_A = \id_A$. We conclude that $\iota$ is an orthometry and $\tilde\sigma$ is a generalised inverse of $\iota$.

Ad (iii): The ``only if'' part holds by definition and the ``if'' part follows from Proposition~\ref{prop:isometry-coisometry}, criterion (d).
\end{proof}

Let $A$ still be a subspace of an orthoset $X$. A partial orthometry $p \colon X \to X$ such that $\zerokernel p = \id_A$ is called a {\it projection} (onto $A$).

\begin{lemma} \label{lem:projections}
Let $X$ be an orthoset and let $p \colon X \to X$ be adjointable. The following are equivalent:
\begin{itemize}

\item[\rm (a)] $p$ is a projection.

\item[\rm (b)] $p = \iota \circ \sigma$, where $\iota \colon A \to X$ is the inclusion map of a subspace $A$ and $\sigma$ is a Sasaki map onto $A$.

\item[\rm (c)] $p$ is idempotent and self-adjoint, and $\image p$ is orthoclosed.

\end{itemize}
\end{lemma}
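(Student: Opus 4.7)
My plan is to establish the cycle (a) $\Rightarrow$ (b) $\Rightarrow$ (c) $\Rightarrow$ (a), in each case taking $A = \image p$ as the distinguished subspace. The unifying observation is that once $p$ factors as $\iota \circ \sigma$ with $\iota$ and $\sigma$ an adjoint pair of inclusion and Sasaki map, both idempotency and self-adjointness become essentially formal consequences of the two identities $\sigma \circ \iota = \id_A$ (equivalent to $\sigma|_A = \id_A$, half of the Sasaki condition) and ``the adjoint of a composition is the reverse composition of adjoints''.

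For (a) $\Rightarrow$ (b), I would first unpack the definition of a projection: $\zerokernel p = \id_A$ forces $(\kernel p)\c = A = (\image p)\cc$ and $p|_A = \id_A$, which together with $\image p \subseteq (\image p)\cc = A$ actually yields $\image p = A$. Fixing a generalised inverse $g$ of $p$, Proposition~\ref{prop:partial-isometry} gives $g|_A = \id_A$ as well. I then define $\sigma \colon X \to A \komma x \mapsto p(x)$, so that $p = \iota \circ \sigma$ holds by construction. To verify that $\sigma$ is a Sasaki map via Lemma~\ref{lem:isometric-inclusion}(iii), I observe that $\sigma|_A = \id_A$ is immediate and that, for $a \in A$ and $x \in X$, the equivalence $a \perp x \;\Leftrightarrow\; a \perp \sigma(x) = p(x)$ unfolds through the adjoint pair $(p,g)$ and $g(a)=a$ to the tautology $x \perp a \;\Leftrightarrow\; x \perp a$. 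Then (b) $\Rightarrow$ (c) is almost formal: idempotency follows from $p \circ p = \iota \circ (\sigma \circ \iota) \circ \sigma = \iota \circ \id_A \circ \sigma = p$; self-adjointness follows because, for any $x, y \in X$, both $p(x) \perp y$ and $x \perp p(y)$ unfold, via the adjoint relationship between $\iota$ and $\sigma$, to the single statement $\sigma(x) \perp \sigma(y)$; and $\image p = \iota(A) = A$ is orthoclosed because $A$ is a subspace.

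For (c) $\Rightarrow$ (a), I set $A = \image p$ and apply Proposition~\ref{prop:partial-isometry}(d) with $f = g = p$: self-adjointness lets $p$ serve as its own adjoint, $\image p$ being orthoclosed is given, and $p \circ p \circ p = p$ follows from $p \circ p = p$. This shows that $p$ is a partial orthometry with itself as generalised inverse, and Lemma~\ref{lem:kernels-images}(i) identifies $(\kernel p)\c = \image p = A$. For any $a = p(x) \in A$, idempotency gives $p(a) = p(p(x)) = p(x) = a$, so $\zerokernel p = \id_A$ and $p$ is a projection onto $A$. The only delicate point is in (a) $\Rightarrow$ (b) for non-irredundant orthosets, where the generalised inverse $g$ is not uniquely determined away from $A \cup A\c$; but the Sasaki property only probes orthogonality against elements of $A$, on which $g$ is forced to act as the identity, so the freedom in choosing $g$ is irrelevant to the verification.
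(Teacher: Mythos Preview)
Your proposal is correct and follows essentially the same cycle (a) $\Rightarrow$ (b) $\Rightarrow$ (c) $\Rightarrow$ (a) as the paper, using the same key ingredients: the generalised inverse restricts to the identity on $A$ (via Proposition~\ref{prop:partial-isometry}), the Sasaki property is checked through Lemma~\ref{lem:isometric-inclusion}(iii), self-adjointness of $\iota\circ\sigma$ comes from $\iota$ and $\sigma$ being an adjoint pair, and (c) $\Rightarrow$ (a) is handled by Proposition~\ref{prop:partial-isometry} together with $(\kernel p)\c = (\image p)\cc = A$. The only cosmetic difference is that in (c) $\Rightarrow$ (a) you invoke criterion~(d) of Proposition~\ref{prop:partial-isometry} first and then compute $\zerokernel p = \id_A$, whereas the paper computes $\zerokernel p = \id_A$ directly and quotes criterion~(c); these amount to the same verification.
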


\begin{proof}
(a) $\Rightarrow$ (b): Assume that $A \in {\mathsf C}(X)$ and $p$ is a partial isometry such that $\zerokernel p = p|_{(\kernel p)\c}^{\image p} = \id_A$. Let $\iota \colon A \to X$ be the inclusion map. As $\image p = A$, we may define $\sigma = p|^A$. Then $p = \iota \circ \sigma$. We claim that $\sigma$ is a Sasaki map onto $A$. Indeed, we have $\sigma|_A = p|_A^A = \id_A$. Furthermore, let $q$ be a generalised inverse of $p$. Then $\zerokernel q = (\zerokernel p)^{-1} = \id_A$. Consequently, for any $a \in A$ and $x \in X$, we have $a \perp x$ iff $q(a) \perp x$ iff $a \perp p(x)$ iff $a \perp \sigma(x)$. The claim now follows by Lemma~\ref{lem:isometric-inclusion}(iii).

(b) $\Rightarrow$ (c): Let $A \in {\mathsf C}(X)$ and $p = \iota \circ \sigma$ as indicated. As $\iota$ and $\sigma$ are an adjoint pair, it follows that $p$ is self-adjoint. Moreover, $\image p = A$ is orthoclosed. Finally, as $p$ is on $A$ the identity, $p$ is idempotent.

(c) $\Rightarrow$ (a): Assume (c) and let $A = \image p$. Then $p(p(x)) = p(x)$ for any $x \in X$ and hence $p|_A = \id_A$. Moreover, by Lemma~\ref{lem:kernels-images}(i), $(\kernel p)\c = (\image p)\cc = A$ and hence $\zerokernel p = p|_A^A = \id_A$. According to criterion (c) of Proposition~\ref{prop:partial-isometry}, $p$ is a partial isometry.
\end{proof}

If an orthoset $X$ is such that all inclusion maps are adjointable, then, as we see next, $X$ is a Dacey space. This fact was exploited in \cite{LiVe} in order to characterise orthosets associated with orthomodular spaces.

\begin{theorem} \label{thm:adjoints-and-Dacey}
Let $X$ be an orthoset such that, for any subspace $A$ of $X$, the inclusion map $\iota \colon A \to X$ is adjointable. Then the following holds.
\begin{itemize}

\item[\rm (i)] $X$ is Dacey.

\item[\rm (ii)] If $X$ is atomistic, ${\mathsf C}(X)$ has the covering property.

\end{itemize}
\end{theorem}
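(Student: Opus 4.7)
The hypothesis supplies, via Lemma~\ref{lem:isometric-inclusion}(ii), a Sasaki map $\sigma_A \colon X \to A$ for every subspace $A$ of $X$; both parts will be deduced by exploiting its defining identity~(\ref{fml:Sasaki-map}). My strategy is to prove~(i) by verifying condition~(c) of Lemma~\ref{lem:Dacey-space}, and then to deduce~(ii) by applying the same plan to $A\c$, using the Sasaki map onto $A\c$ to produce an atom that realises the cover.

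For~(i), fix $A \in {\mathsf C}(X)$ and check ${\mathsf C}(A) = \{B \in {\mathsf C}(X) \colon B \subseteq A\}$. The inclusion $\subseteq$ is hypothesis-free: if $B \in {\mathsf C}(A)$ and $x \in B\cc$, then $x \perp B\c \supseteq A\c$ forces $x \in A$, and $x \perp B\c \cap A$ then places $x$ in $B\ce{A}\ce{A} = B$. The essential direction is the reverse: given $B \in {\mathsf C}(X)$ with $B \subseteq A$ and $x \in A$ with $x \perp B\c \cap A$, I will show $x \in B$ by fixing an arbitrary $y \in B\c$ and proving $x \perp y$. The Sasaki map does exactly this: for any $b \in B \subseteq A$, identity~(\ref{fml:Sasaki-map}) gives $b \perp y \Leftrightarrow b \perp \sigma_A(y)$, so $\sigma_A(y) \in B\c \cap A$; hence $x \perp \sigma_A(y)$, and a second application of~(\ref{fml:Sasaki-map}) to $x \in A$ yields $x \perp y$.

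For~(ii), assume $X$ is atomistic and let $p$ be an atom of ${\mathsf C}(X)$ with $p \nleq A$. By Lemma~\ref{lem:atomistic} one has $p = \{x\}\cc$ for some $x \notin A$. Choose a Sasaki map $\sigma \colon X \to A\c$; Lemma~\ref{lem:isometric-inclusion}(i)(c) delivers the identity
\[ A \vee \{x\}\cc \;=\; A \vee \{\sigma(x)\}\cc. \]
The assumption $p \nleq A$ rules out $\sigma(x) = 0$ (otherwise the right-hand side collapses to $A$), so by atomisticity $\{\sigma(x)\}\cc$ is an atom of ${\mathsf C}(X)$ contained in $A\c$. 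Part~(i) already furnishes orthomodularity of ${\mathsf C}(X)$; combined with $A \perp \{\sigma(x)\}\cc$ this yields $(A \vee p) \cap A\c = \{\sigma(x)\}\cc$. For any $C \in {\mathsf C}(X)$ with $A \leq C \leq A \vee p$, orthomodularity forces $C = A \vee (C \cap A\c)$ with $C \cap A\c$ either $\{0\}$ or the atom $\{\sigma(x)\}\cc$, so $C \in \{A,\, A \vee p\}$, which is the covering property.

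The only step where the adjointability hypothesis is genuinely indispensable is the transfer $y \in B\c \Rightarrow \sigma_A(y) \in B\c \cap A$ in~(i); everything else is routine bookkeeping around orthomodularity and atomisticity.
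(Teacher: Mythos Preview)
Your proof is correct. Part~(ii) follows the paper's argument almost verbatim (the paper leaves the final covering step as the standard fact that in an orthomodular lattice an atom orthogonal to $A$ yields a cover of $A$; you spell this out via $C = A \vee (C \cap A\c)$, which is fine).

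Part~(i), however, takes a genuinely different route. The paper verifies Dacey's criterion~(d) of Lemma~\ref{lem:Dacey-space}: given $A \in {\mathsf C}(X)$ and a maximal $\perp$-set $D \subseteq A$, it applies a Sasaki map onto $D\c$ to a hypothetical $e \in A \setminus D\cc$, producing an element $\sigma(e) \in D\c \cap A$ that contradicts the maximality of $D$. You instead verify criterion~(c) directly, using the Sasaki map onto $A$ itself to transport an arbitrary $y \in B\c$ to $\sigma_A(y) \in B\c \cap A$. Your argument is slightly more elementary in that it avoids maximal $\perp$-sets (and hence any appeal to Zorn's lemma for their existence), and it makes the role of the Sasaki identity~(\ref{fml:Sasaki-map}) perhaps more transparent: it is used twice, once to push $y$ into $A$ and once to pull the orthogonality back to $x \perp y$. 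The paper's approach, on the other hand, packages the same idea into a single application of Lemma~\ref{lem:isometric-inclusion}(i)(c) and is marginally shorter. Both are equally valid realisations of the same underlying mechanism.
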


\begin{proof}
Ad (i): Let $A \in {\mathsf C}(X)$ and let $D$ be a maximal $\perp$-set contained in $A$. Let $\sigma$ be an adjoint of the inclusion map of $D\c$ into $X$. Assume that there is some $e \in A \setminus D\cc$. By Lemma~\ref{lem:isometric-inclusion}(i), $D\cc \subsetneq D\cc \vee \{e\}\cc = D\cc \vee \{\sigma(e)\}\cc$. But then $D \cup \{\sigma(e)\} \subseteq A$ is a $\perp$-set, a contradiction. Hence $D\cc = A$ and we conclude from Lemma~\ref{lem:Dacey-space} that ${\mathsf C}(X)$ is orthomodular.

Ad (ii): Let $X$ be atomistic. Then, by Lemma~\ref{lem:atomistic}, ${\mathsf C}(X)$ is atomistic, the atoms being $\{x\}\cc$, $x \in X$. Let $A \in {\mathsf C}(X)$ and $x \notin A$. By Lemma~\ref{lem:isometric-inclusion}(i), there is a $y \perp A$ such that $A \vee \{x\}\cc = A \vee \{y\}\cc$. As $\{y\}\cc$ is an atom of the orthomodular lattice ${\mathsf C}(X)$, it follows that $A$ is covered by $A \vee \{x\}\cc$.
\end{proof}

We note that the converse of Theorem~\ref{thm:adjoints-and-Dacey}(i) does not hold: a Dacey space does not in general have the property that inclusion maps of subspaces are adjointable. Indeed, consider any complete atomistic orthomodular lattice $L$ that does not have the covering property. For instance, let $L$ be the horizontal sum of the $4$-element Boolean algebra and the $8$-element Boolean algebra (see Figure~\ref{fig:OML}). By Proposition~\ref{prop:Frechet-orthosets}, $L$ is isomorphic to ${\mathsf C}({\mathsf B}(L))$, hence the atomistic Dacey space ${\mathsf B}(L)$ provides by Theorem~\ref{thm:adjoints-and-Dacey}(ii) a counterexample. In contrast, we will see below (Lemma~\ref{lem:inclusion-maps-in-OML-as-OS}) that, for any complete orthomodular lattice~$L$, $L\supOS$ does have the property that inclusion maps of subspaces are adjointable.
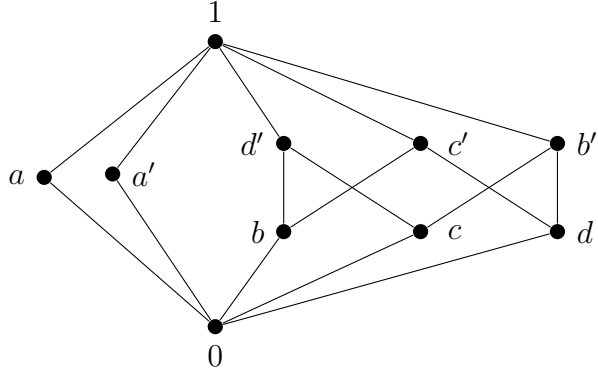
\begin{figure}[h]
\begin{center}
\begin{tikzpicture}[scale=0.9]
\node[circle,fill,inner sep=0,minimum size=2mm,label={left:$a$}] (a1) at (-2.5,0.5) {};

\node[circle,fill,inner sep=0,minimum size=2mm,label={right:$a'$}] (b1) at (-1.5,0.55) {};

\node[circle,fill,inner sep=0,minimum size=2mm,label={left:$d'$}] (ab2) at (1,1) {};

\node[circle,fill,inner sep=0,minimum size=2mm,label={right:$\ c'$}] (ac2) at (3,1) {};
\node[circle,fill,inner sep=0,minimum size=2mm,label={right:$b'$}] (bc2) at (5,1) {};
\node[circle,fill,inner sep=0,minimum size=2mm,label={left:$b$}] (a2) at (1,-0.3) {};
\node[circle,fill,inner sep=0,minimum size=2mm,label={right:$\ c$}] (b2) at (3,-0.3) {};
\node[circle,fill,inner sep=0,minimum size=2mm,label={right:$d$}] (c2) at (5,-0.3) {};

\draw (b2) -- (ab2);

\node[circle,fill,inner sep=0,minimum size=2mm,label={above:$1$}] (top) at (0,2.5) {};
\node[circle,fill,inner sep=0,minimum size=2mm,label={below:$0$}] (bottom) at (0,-1.7) {};

\draw (top) -- (a1);
\draw (top) -- (b1);
\draw (bottom) -- (a1);
\draw (bottom) -- (b1);
\draw (bottom) -- (a2) -- (ab2) -- (top) -- (bc2) -- (c2) -- (bottom);
\draw (c2) -- (ac2) -- (top);
\draw (bottom) -- (b2) -- (bc2);
\draw (a2) -- (ac2);

\end{tikzpicture}
\end{center}
\caption{\label{fig:OML} Example of a complete atomistic orthomodular lattice that does not have the covering property.}
\end{figure}

\begin{lemma} \label{lem:image-of-partial-isometry}
Let $X$ and $Y$ be atomistic Dacey spaces and let $f \colon X \to Y$ and $g \colon Y \to X$ be an adjoint pair of maps such that $f \prl f \circ g \circ f$. Then $\image P(f)$ is an orthoclosed subset of $P(Y)$ and $\image P(g)$ is an orthoclosed subset of $P(X)$. In particular, $P(f)$ is a partial orthometry between $P(X)$ and $P(Y)$, and $P(g)$ is a generalised inverse of $P(f)$.
\end{lemma}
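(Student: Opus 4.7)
The plan is to pass to the irredundant (Fréchet) quotients $P(X)$ and $P(Y)$ and then to verify criterion~(d) of Proposition~\ref{prop:partial-isometry}. I first derive $g \prl g \circ f \circ g$ from $f \prl f \circ g \circ f$ by chasing adjointness,
\[
x \perp g(y) \iff f(x) \perp y \iff f(g(f(x))) \perp y \iff g(f(x)) \perp g(y) \iff x \perp g(f(g(y))),
\]
so $\{g(y)\}^\perp = \{g(f(g(y)))\}^\perp$. By Proposition~\ref{prop:quotient-map}, $P(f)$ and $P(g)$ form an adjoint pair between $P(X)$ and $P(Y)$. These quotients are atomistic Dacey (the ortholattice is preserved by Proposition~\ref{prop:irredundant-quotient}(i)) and Fréchet by Lemma~\ref{lem:atomistic}(f); since they are additionally irredundant, Lemma~\ref{lem:irredundancy-adjoints-1}(ii) upgrades the two $\prl$-relations to the strict equalities $P(f) \circ P(g) \circ P(f) = P(f)$ and $P(g) \circ P(f) \circ P(g) = P(g)$.

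By criterion~(d) of Proposition~\ref{prop:partial-isometry}, what remains is to show that $\image P(f)$ is orthoclosed in $P(Y)$ (the case of $\image P(g)$ is symmetric). I pick a maximal $\perp$-set $D \subseteq \image P(f)$ by Zorn and show first that $D$ is already maximal in the orthoclosed hull $B := (\image P(f))^{\perp\perp} = (\kernel P(g))^\perp$, where the second equality uses Lemma~\ref{lem:kernels-images}(i). If some $y \in B \setminus \{0\}$ were orthogonal to $D$, then $y \notin \kernel P(g)$ forces $P(g)(y) \neq 0$, and $P(g)(y) \notin \kernel P(f) = (\image P(g))^\perp$ (else $P(g)(y) \perp P(g)(y) = 0$); hence $z := P(f)(P(g)(y))$ is a nonzero element of $\image P(f)$. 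Since $P(f) \circ P(g) \circ P(f) = P(f)$ gives $P(f)(P(g)(d)) = d$ for every $d \in D$, the chain
\[
z \perp d \iff P(g)(y) \perp P(g)(d) \iff y \perp P(f)(P(g)(d)) = d
\]
shows $z \perp D$, and $D \cup \{z\}$ strictly enlarges $D$ within $\image P(f)$, contradicting maximality. Dacey's criterion (Lemma~\ref{lem:Dacey-space}(d)) applied to $B$ now delivers $B = D^{\perp\perp}$, pinning $\image P(f)$ between $D$ and $B$.

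The main obstacle is closing the remaining inclusion $B \subseteq \image P(f)$. For $y \in B$, the natural candidate $z = P(f)(P(g)(y)) \in \image P(f)$ should coincide with $y$ in the Fréchet orthoset $P(Y)$. The composite $p := P(f) \circ P(g) \colon P(Y) \to P(Y)$ is an adjointable, idempotent and self-adjoint endomap with $\kernel p = \kernel P(g)$ and $\image p = \image P(f)$; by Lemma~\ref{lem:adjointness-continuity}(iii), its preimage of the orthoclosed set $\{y\}^\perp$ is the orthoclosed set $\{p(y)\}^\perp = \{z\}^\perp$. Exploiting the orthomodular decomposition $(B, \kernel P(g))$ of $P(Y)$, atomisticity of ${\mathsf C}(P(Y))$, and the already-established coincidence $\{z\}^\perp \cap \image P(f) = \{y\}^\perp \cap \image P(f)$ (obtained by the same adjointness chain applied to arbitrary $w \in \image P(f)$ in place of $d \in D$), one promotes this partial agreement to $\{y\}^\perp = \{z\}^\perp$ in full, whence $z = y$ by Fréchetness. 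Once orthoclosedness of both images is secured, Proposition~\ref{prop:partial-isometry} delivers the final assertion.
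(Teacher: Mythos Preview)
Your preliminary steps are correct: the derivation of $g \prl g \circ f \circ g$, the passage to the Fr\'echet quotients, and the upgrade to strict equalities $P(f)\circ P(g)\circ P(f)=P(f)$ and $P(g)\circ P(f)\circ P(g)=P(g)$ are all fine. The detour through a maximal $\perp$-set $D$ and Dacey's criterion, however, establishes only $D^{\perp\perp}=B$, which is the already-known equality $(\image P(f))^{\perp\perp}=B$; it contributes nothing to the remaining inclusion $B\subseteq\image P(f)$.

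The genuine gap is the final ``promotion'' step. You correctly show that $\{y\}^\perp\cap\image P(f)=\{z\}^\perp\cap\image P(f)$ for $z=P(f)(P(g)(y))$, but then assert without argument that this extends to $\{y\}^\perp=\{z\}^\perp$. The ingredients you list (the decomposition $(B,\kernel P(g))$ and atomisticity) do not by themselves yield this: two distinct coatoms in an atomistic orthomodular lattice can agree on a subset whose double orthocomplement is the whole space. Concretely, knowing that $\{y\}^\perp$ and $\{z\}^\perp$ agree on $\image P(f)$ and on $B^\perp$ says nothing about elements of $B\setminus\image P(f)$, and the existence of such elements is precisely what is in question.

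The paper closes this gap by a different and short computation. For $y\in(\image f)^{\perp\perp}$ it uses sup-preservation of ${\mathsf C}(f)$ (Lemma~\ref{lem:lattice-adjoint}(ii)) to write
\[
(\image f)^{\perp\perp}
= f\bigl(\{g(y)\}^\perp \vee \{g(y)\}^{\perp\perp}\bigr)^{\perp\perp}
= f(\{g(y)\}^\perp)^{\perp\perp}\ \vee\ \{f(g(y))\}^{\perp\perp},
\]
observes via adjointness (and $g\prl g\circ f\circ g$) that the two summands are mutually orthogonal and that $y\perp f(\{g(y)\}^\perp)$, and then uses orthomodularity to conclude $\{y\}^{\perp\perp}\subseteq\{f(g(y))\}^{\perp\perp}$; atomisticity then gives $y\prl f(g(y))$. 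This is the missing idea in your argument: one needs an explicit orthogonal splitting of $(\image f)^{\perp\perp}$ in which the piece containing $y$ is visibly $\{f(g(y))\}^{\perp\perp}$, rather than an attempt to compare $\{y\}^\perp$ and $\{z\}^\perp$ pointwise on a dense set.
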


\begin{proof}
We will only show that $\image P(f)$ is orthoclosed. Note that we have $g \prl g \circ f \circ g$. Hence it will follow similarly that $P(g)$ is orthoclosed, and the remaining assumptions will follow from criterion (d) in Proposition~\ref{prop:partial-isometry}.

Let $y \in (\image f)\cc$. We have to show that $y$ is equivalent to an element of $\image f$. We may assume $y \neq 0$. By Lemmas~\ref{lem:lattice-adjoint}(ii) and~\ref{lem:adjointness-continuity}(ii),
\begin{equation} \label{fml:image-of-partial-isometry}
\begin{split} (\image f)\cc
& \;=\; f \big(\{ g(y) \}\c \vee \{ g(y) \}\cc\big)\cc \\
& \;=\; f\big(\{ g(y) \}\c\big)\cc \;\vee\; f\big(\{ g(y) \}\cc\big)\cc \\
& \;=\; f\big(\{ g(y) \}\c\big)\cc \;\vee\; \{f(g(y))\}\cc. \end{split}
\end{equation}
From $\{g(y)\}\c \perp g(y)$ it follows $\{g(y)\}\c \perp g(f(g(y)))$ and hence $f(\{g(y)\}\c) \perp f(g(y))$. It also follows $y \perp f(\{ g(y) \}\c)$. Hence, by orthomodularity, we conclude from (\ref{fml:image-of-partial-isometry}) that $\{y\}\cc \subseteq \{f(g(y))\}\cc$. Since $X$ is atomistic, this means $y \prl f(g(y)) \in \image f$.
\end{proof}

If the orthosets dealt with in Lemma~\ref{lem:image-of-partial-isometry} are irredundant, the statement simplifies. We get in this case a convenient characterisation of partial orthometries between Fr\' echet Dacey spaces.

\begin{theorem}
Let $f \colon X \to Y$ and $g \colon Y \to X$ be an adjoint pair of maps between Fr\' echet Dacey spaces.
\begin{itemize}

\item[\rm (i)] $f$ is a partial orthometry if and only if $f = f \circ g \circ f$.

\item[\rm (ii)] $f$ is a orthometry if and only if $g \circ f = \id_X$.

\item[\rm (iii)] $f$ is a coorthometry if and only if $f \circ g = \id_Y$.

\end{itemize}
In each of these cases, $g$ is the generalised inverse of $f$.
\end{theorem}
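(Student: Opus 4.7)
The plan is to combine Propositions~\ref{prop:partial-isometry} and~\ref{prop:isometry-coisometry} with Lemma~\ref{lem:image-of-partial-isometry}, exploiting that a Fréchet orthoset is both irredundant and atomistic (Lemma~\ref{lem:Frechet}). Consequently, adjoints in $X$ and $Y$ are unique by Lemma~\ref{lem:irredundancy-adjoints-1}(ii), the relation $\prl$ collapses to equality, and $g$ is the unique adjoint of $f$.

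For the forward direction of (i), if $f$ is a partial orthometry, it admits a generalised inverse which, being an adjoint of $f$, must equal $g$; then Proposition~\ref{prop:partial-isometry}(d) gives $f \circ g \circ f = f$. For the converse, I assume $f = f \circ g \circ f$, which in particular means $f \prl f \circ g \circ f$. Since $X, Y$ are atomistic Dacey spaces, Lemma~\ref{lem:image-of-partial-isometry} yields that $\image P(f)$ and $\image P(g)$ are orthoclosed in $P(Y)$ and $P(X)$, respectively. Irredundancy of $X, Y$ makes each equivalence class a singleton (Lemma~\ref{lem:irredundant}(d)), so $P(X), P(Y)$ are canonically identified with $X, Y$ as orthosets, and the orthoclosedness transfers to $\image f$ and $\image g$ themselves. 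The remaining identity $g \circ f \circ g = g$ follows by uniqueness of adjoints, since $g \circ f \circ g$ is an adjoint of the composite $f \circ g \circ f = f$. Proposition~\ref{prop:partial-isometry}(d) then concludes that $f$ is a partial orthometry with generalised inverse $g$.

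Parts (ii) and (iii) fall out quickly from (i). For (ii), the forward direction uses Proposition~\ref{prop:isometry-coisometry}(d) applied to $f$ with generalised inverse $g$; for the backward direction, $g \circ f = \id_X$ gives $f \circ g \circ f = f$ and hence, by (i), $f$ is a partial orthometry, while $g \circ f = \id_X$ also makes $f$ injective (if $f(x_1) = f(x_2)$ then $x_1 = g(f(x_1)) = g(f(x_2)) = x_2$), so $f$ is an orthometry. Part (iii) is the dual statement: $f \circ g = \id_Y$ yields $f \circ g \circ f = f$, hence $f$ is a partial orthometry by (i); moreover every $y \in Y$ equals $f(g(y))$, so $f$ is surjective and therefore a coorthometry.

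The main obstacle is the backward direction of (i): extracting from the single identity $f = f \circ g \circ f$ the orthoclosedness of $\image f$ and $\image g$ genuinely requires the atomistic Dacey setting, and this is precisely what Lemma~\ref{lem:image-of-partial-isometry} provides. Once this orthoclosedness is available, the rest of the argument is essentially bookkeeping via uniqueness of adjoints.
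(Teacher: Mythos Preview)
Your proof is correct and follows essentially the same approach as the paper: both use the Fr\'echet hypothesis to collapse $P(X)$ and $P(Y)$ to $X$ and $Y$, so that Lemma~\ref{lem:image-of-partial-isometry} applies directly, and both use uniqueness of adjoints (irredundancy) to identify $g$ with the generalised inverse. The paper is terser---it invokes the full conclusion of Lemma~\ref{lem:image-of-partial-isometry} (that $P(f)$ is already a partial orthometry with generalised inverse $P(g)$) rather than extracting orthoclosedness and then reassembling via Proposition~\ref{prop:partial-isometry}(d), and it dismisses (ii) and (iii) with ``seen similarly''---but your more explicit treatment of (ii) and (iii) via injectivity and surjectivity is equally valid and arguably clearer.
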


\begin{proof}
We only show part (i); the remaining parts are seen similarly.

Let $f$ be a partial orthometry. By irredundancy, $g$ is the unique adjoint and hence the generalised inverse of $f$. Therefore $f = f \circ g \circ f$ by Proposition~\ref{prop:partial-isometry}.

Conversely, assume that $f = f \circ g \circ f$. By Lemma~\ref{lem:image-of-partial-isometry}, $f$ is a partial orthometry and $g$ its generalised inverse.
\end{proof}

We likewise get an easy description of projections of Fr\' echet Dacey spaces.

\begin{lemma} \label{lem:projection}
Let $X$ be an atomistic Dacey space and let $p \colon X \to X$. If $p$ is a projection, then $p$ is idempotent and self-adjoint. Conversely, if $p$ is idempotent and self-adjoint, then $p$ is equivalent to a projection.
\end{lemma}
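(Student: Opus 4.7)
The forward direction follows immediately from Lemma~\ref{lem:projections}: the implication (a) $\Rightarrow$ (c) there already states that any projection is idempotent and self-adjoint.

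For the converse, let $p$ be idempotent and self-adjoint. I cannot invoke Lemma~\ref{lem:projections}, (c) $\Rightarrow$ (a), directly, because in the present setting $\image p$ need not be orthoclosed. The plan is therefore to produce a projection $p'$ equivalent to $p$. Since $p \circ p = p$, we have in particular $p \prl p \circ p \circ p$, so Lemma~\ref{lem:image-of-partial-isometry}, applied with $f = g = p$, yields that for every $y \in A := (\image p)\cc$ we have $y \prl p(p(y)) = p(y)$. In other words, $p$ acts on $A$ as the identity up to equivalence.

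With this key fact in hand, I would define $p' \colon X \to X$ by setting $p'(x) = x$ for $x \in A$ and $p'(x) = p(x)$ otherwise. Then $p' \prl p$ by the equivalence just established (on $A$ because $x \prl p(x)$, off $A$ trivially), $\image p' = A$ is orthoclosed, and $p'|_A = \id_A$, so $p'$ is idempotent. A short case analysis on whether $x$ and $y$ lie in $A$, using the self-adjointness of $p$ together with the fact that $p(x) \prl x$ for $x \in A$, shows that $p'$ is self-adjoint. Lemma~\ref{lem:projections}, (c) $\Rightarrow$ (a), then delivers that $p'$ is a projection, and $p \prl p'$ concludes the proof.

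The only non-routine ingredient is the appeal to Lemma~\ref{lem:image-of-partial-isometry}; without the structural information that every element of $(\image p)\cc$ is equivalent to one in $\image p$, there would be no way to modify $p$ on the ``missing'' part of its image to make that image orthoclosed while staying in the equivalence class of $p$. The case split establishing self-adjointness of $p'$, and the remaining verifications, are entirely mechanical.
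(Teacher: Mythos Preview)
Your proposal is correct and follows essentially the same route as the paper: use Lemma~\ref{lem:image-of-partial-isometry} (with $f=g=p$) to see that every $a\in A=(\image p)\cc$ satisfies $a\prl p(a)$, redefine $p$ to be the identity on $A$, and then apply Lemma~\ref{lem:projections}. One small point of precision: the \emph{statement} of Lemma~\ref{lem:image-of-partial-isometry} only gives that $\image P(p)$ is orthoclosed in $P(X)$; the specific conclusion $y\prl p(y)$ for $y\in A$ then follows from this plus idempotence (if $[y]\in\image P(p)$, say $y\prl p(x)$, then $p(y)\prl p(p(x))=p(x)\prl y$), which is exactly the short verification the paper phrases as ``we readily check''. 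Also, your case analysis for the self-adjointness of $p'$ is unnecessary: from $p'\prl p$ one gets directly $p'(x)\perp y \Leftrightarrow p(x)\perp y \Leftrightarrow x\perp p(y) \Leftrightarrow x\perp p'(y)$.
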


\begin{proof}
The first part holds by Lemma~\ref{lem:projections}.

Assume that $p$ is idempotent and self-adjoint. Then, by Lemma~\ref{lem:image-of-partial-isometry}, $\image P(p)$ is orthoclosed in $P(X)$. Let $A = (\image p)\cc$. We readily check that then $p(a) \prl a$ for any $a \in A$. We put $\tilde p \colon X \to X \komma \footnotesize x \mapsto \begin{cases} x & \text{if $x \in A$}, \\ p(x) & \text{otherwise.} \end{cases}$ Then $\tilde p \prl p$, and $\tilde p$ is still idempotent and self-adjoint. Moreover, $\image \tilde p = A$ is orthoclosed. Hence $\tilde p$ is a projection by Lemma~\ref{lem:projections}.
\end{proof}

\begin{theorem} \label{thm:projection}
Let $X$ be a Fr\' echet Dacey space and let $p \colon X \to X$. Then $p$ is a projection if and only if $p$ is idempotent and self-adjoint.
\end{theorem}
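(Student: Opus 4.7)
The theorem is a direct consequence of the preceding results, essentially promoting the ``equivalent to a projection'' statement of Lemma~\ref{lem:projection} to genuine equality, thanks to the Fr\'echet property.

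For the forward direction, if $p$ is a projection then by Lemma~\ref{lem:projections} (criterion (c)) $p$ is idempotent and self-adjoint. This direction does not require the Fr\'echet assumption.

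For the converse, suppose $p$ is idempotent and self-adjoint. Since $X$ is Fr\'echet, by Lemma~\ref{lem:Frechet} it is in particular atomistic and irredundant, so $X$ is an atomistic Dacey space and Lemma~\ref{lem:projection} applies: there exists a projection $\tilde p \colon X \to X$ with $p \prl \tilde p$, that is, $p(x) \prl \tilde p(x)$ for every $x \in X$. By Lemma~\ref{lem:irredundant}(d), irredundancy of $X$ means that $\prl$ is the equality relation on $X$, so $p(x) = \tilde p(x)$ for every $x$, i.e.\ $p = \tilde p$. Hence $p$ itself is a projection.

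There is no real obstacle here: the entire content has been prepared in Lemmas~\ref{lem:projections} and~\ref{lem:projection}, and the role of the Fr\'echet hypothesis is solely to upgrade ``equivalent to a projection'' to ``equal to a projection'' by killing the multiplicity of $\prl$-equivalent elements. The only mild point to verify is that Fr\'echet orthosets are simultaneously atomistic (so that Lemma~\ref{lem:projection} is available) and irredundant (so that equivalence of maps forces pointwise equality), both of which are immediate from Lemma~\ref{lem:Frechet}(c).
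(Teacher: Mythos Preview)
Your proof is correct and follows exactly the route intended by the paper, whose own proof is the one-liner ``This is clear from Lemma~\ref{lem:projection}.'' You have simply spelled out the details: the forward direction via Lemma~\ref{lem:projections}, and the converse via Lemma~\ref{lem:projection} together with the observation that Fr\'echet means irredundant and atomistic (Lemma~\ref{lem:Frechet}(c)), so that equivalence of maps collapses to equality.
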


\begin{proof}
This is clear from Lemma \ref{lem:projection}.
\end{proof}

\section{A category of orthosets}
\label{sec:OS}

A major issue underlying the present work is the problem of how to organise orthosets into a category. It is not an easy matter to decide what kind of maps should be chosen as the morphisms. As to be expected, we use the adjointability condition in this paper.

Let $\OS$ be the category whose objects are all orthosets and whose morphisms are all adjointable maps between orthosets. This definition makes sense, for, as we noticed at the beginning of Section~\ref{sec:Adjointable-maps}, the identity map on an orthoset is adjointable, and the composition of two adjointable maps is again adjointable.

Alternatively, we could choose as morphisms maps that preserve the orthogonality relation. We exploited this approach in our previous work \cite{PaVe1,PaVe2}. We note, however, that adjointability depends likewise on the orthogonality relation in a straightforward way. Moreover, this concept offers a greater flexibility and is better suited for a categorical approach to inner-product spaces. 

We shall see in this section that monomorphisms and epimorphisms in $\OS$ are the injective and surjective maps, respectively. By means of a counterexample, we moreover observe that $\OS$ does not possess equalisers.

\begin{remark}
Let $F$ be one of \/ $\Reals$ or $\Complexes$ and let $\Hil{F}$ be the category of Hilbert spaces over $F$, seen as orthosets, and bounded linear maps between them. By Example~\ref{ex:Adjoints-in-Hilbert-spaces}, $\Hil{F}$ is a subcategory of $\OS$.
\end{remark}

A {\em zero object} of a category is an object $0$ that is both initial and terminal. This means that there are, for any $A$, unique morphisms $0 \to A$ and $A \to 0$. For objects $A$ and $B$, $0_{A,B}$ denotes in this case the morphism factoring through $0$, called the {\it zero map} from $A$ to $B$.

We denote by $\Zero$ the orthoset consisting solely of falsity, called the {\it zero orthoset}. In addition, for use in several proofs that follow, we let $\One$ be an orthoset that contains a single proper element $p$. 

\begin{lemma} \label{lem:nI}
Let $X$ and $Y$ be orthosets.
\begin{itemize}

\item[\rm (i)] $\Zero$ is the zero object of $\OS$. The morphism $0_{\Zero,X} \colon \Zero \to X$ is the map sending $0$ to $0$.

\item[\rm (ii)] The zero map $0_{X,Y}\colon X \to Y$ has the unique adjoint $0_{Y,X}\colon Y\to X$.

\item[\rm (iii)] Every map $f \colon \One \to X$ such that $f(0) = 0$ possesses a unique adjoint. 

\end{itemize}
\end{lemma}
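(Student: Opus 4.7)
\textbf{Proof plan for Lemma \ref{lem:nI}.}

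For part (i), my approach is to use Lemma \ref{lem:adjointness-continuity}(i), which forces every adjointable map to send $0$ to $0$. Since $\Zero = \{0\}$, for any orthoset $X$ the only candidate for a morphism $X \to \Zero$ is the constant $0$ map, and the only candidate for $\Zero \to X$ is the map $0 \mapsto 0$. So uniqueness in both directions is automatic, provided these maps are actually in $\OS$. To verify adjointability, I would exhibit each as the adjoint of the other: for the pair $(f \colon X \to \Zero, g \colon \Zero \to X)$ given by $f(x)=0$ and $g(0)=0$, the biconditional $f(x)\perp 0 \iff x \perp g(0)$ reduces to $0 \perp 0$ on both sides, which holds by (O3). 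This simultaneously shows that $\Zero$ is both initial and terminal, i.e.\ a zero object, and identifies $0_{\Zero,X}$ explicitly.

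For part (ii), I would first observe that the zero morphism $0_{X,Y}$ is by construction the composition $X \to \Zero \to Y$ of the two morphisms from part (i), so by the general remark in Section \ref{sec:Adjointable-maps} on composition of adjoints, $0_{Y,X}$ is an adjoint. Alternatively, one verifies directly that $0_{X,Y}(x)=0\perp y$ and $x\perp 0=0_{Y,X}(y)$ both hold vacuously by (O3). For uniqueness, suppose $g \colon Y \to X$ is any adjoint of $0_{X,Y}$. Then for every $y \in Y$ and every $x \in X$ one has $0 = 0_{X,Y}(x) \perp y$, hence $x \perp g(y)$. Taking $x = g(y)$ gives $g(y) \perp g(y)$, so (O2) forces $g(y)=0$. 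Thus $g = 0_{Y,X}$.

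For part (iii), write $\One = \{0,p\}$ with $p \notperp p$. A map $f \colon \One \to X$ with $f(0)=0$ is determined by $a := f(p) \in X$. I will define the candidate adjoint $g \colon X \to \One$ by cases on whether $x \perp a$ or not, namely $g(x)=0$ if $x \perp a$ and $g(x)=p$ otherwise, and verify the adjointness biconditional. The condition $f(0)\perp x$ reads $0\perp x$, matched by $0\perp g(x)$, which is automatic. The condition $f(p)\perp x$ reads $a \perp x$, matched by $p \perp g(x)$, which by the definition of $\One$ is equivalent to $g(x)=0$, i.e.\ to $x\perp a$ by our definition of $g$. Uniqueness of $g$ follows immediately from the same equivalence: any adjoint must satisfy $g(x)=0 \iff p\perp g(x) \iff a\perp x$, which determines $g(x)\in\{0,p\}$ uniquely.

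No step here is substantially difficult; the only point requiring care is making sure that the trivial biconditionals involving $0$ are discharged via (O3) rather than left implicit, and that the uniqueness argument in (ii) actually invokes (O2) to collapse a $\perp$-with-itself statement to equality with $0$.
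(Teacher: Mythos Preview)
Your proposal is correct and follows essentially the same approach as the paper: part (i) uses Lemma~\ref{lem:adjointness-continuity}(i) for uniqueness and verifies the adjoint pair directly, part (ii) obtains the adjoint by composition through $\Zero$, and part (iii) constructs the explicit adjoint $g(x)=0$ iff $x\perp f(p)$. The only minor difference is in the uniqueness argument for (ii): the paper observes that the sole map equivalent to $0_{Y,X}$ is $0_{Y,X}$ itself (implicitly via Lemma~\ref{lem:irredundancy-adjoints-1}(i)), whereas you argue directly via (O2); both are equally short and valid.
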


\begin{proof}
Ad (i): Let $0_{\Zero,X} \colon \Zero \to X$ be the map such that $0_{\Zero,X}(0) = 0$, and let $0_{X,\Zero}$ be the map from $X$ to $\Zero$. We have $0_{\Zero,X}(0) \perp x$ and $0 \perp 0_{X,\Zero}(x)$ for any $x \in X$, hence $0_{\Zero,X}$ and $0_{X,\Zero}$ are an adjoint pair. By Lemma~\ref{lem:adjointness-continuity}(i), $0_{\Zero,X}$ is the unique adjointable map from $\Zero$ to $X$, and $0_{X,\Zero}$ is the unique map from $X$ to $\Zero$. The assertions follow.

Ad (ii): By part (i), $0_{Y,X} = 0_{\Zero,X} \circ 0_{Y,\Zero}$ is an adjoint of $0_{X,Y} = 0_{\Zero,Y} \circ 0_{X,\Zero}$. Moreover, the only map equivalent to $0_{Y,X}$ is $0_{Y,X}$ itself, which shows the uniqueness assertion.

Ad (iii): We have $\One = \{p,0\}$. Let $f \colon \One \to X$ be such that $f(0) = 0$. Then a map $g \colon X \to \One$ is an adjoint of $f$ if and only if, for any $x \in X$, we have that $f(p) \perp x$ iff $p \perp g(x)$. Hence $f$ has the unique adjoint $g \colon X \to \One \komma x \mapsto \footnotesize \begin{cases} 0 & \text{if $x \perp f(p)$,} \\ p & \text{otherwise.} \end{cases}$
\end{proof}

We shall characterise the monomorphisms and epimorphisms in $\OS$. We will apply the so-called doubling point construction, explained in the following lemma; cf.\ also \cite[Lemma 4.1]{PaVe2}.

\begin{lemma} \label{lem:doubling}
Let $(X,\perpe{X})$ be an orthoset and let $u \in X\withoutzero$. Let $Z$ arise from $X$ by replacing $u$ with two new elements $u_1$ and $u_2$, and endow $Z$ with the relation $\perpe{Z}$ as follows: for $x, y \in Z \setminus \{u_1,u_2\}$ such that $x \perpe{X} y$, let $x \perpe{Z} y$, and for $x \in Z \setminus \{u_1,u_2\}$ such that $x \perpe{X} u$, let $u_1, u_2 \perpe{Z} x$ and $x \perpe{Z} u_1, u_2$. Then $(Z, \perpe{Z})$ is an orthoset.

Moreover, let $h_1, h_2 \colon X \to Z$ be given as follows: $h_1(x) = h_2(x) = x$ if $x \neq u$; $h_1(u) = u_1$; and $h_2(u) = u_2$. Then $h_1, h_2$ are adjointable.
\end{lemma}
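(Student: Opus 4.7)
The plan is a direct verification in two steps, each by case analysis. For the first step, I would check (O1)--(O3) for $(Z, \perpe{Z})$. Symmetry is built into the definition. For (O2), observe that for $x \in Z \setminus \{u_1,u_2\}$ the relation $x \perpe{Z} x$ is inherited verbatim from $\perpe{X}$, so $x \perpe{Z} x$ forces $x=0$. The two new elements $u_1, u_2$ are never declared orthogonal to themselves or to each other (the second clause only produces orthogonalities between $\{u_1,u_2\}$ and elements of $Z \setminus \{u_1,u_2\}$), so $u_i \perpe{Z} u_i$ fails, as required since $u_1, u_2 \neq 0$. For (O3), since $0 \in Z \setminus \{u_1,u_2\}$ and $0 \perpe{X} u$, the second clause yields $0 \perpe{Z} u_1$ and $0 \perpe{Z} u_2$, while for $x \in Z \setminus \{u_1,u_2\}$ the relation $0 \perpe{Z} x$ is inherited from $X$.

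For the second step, the natural candidate for an adjoint of both $h_1$ and $h_2$ is the ``collapsing'' map
\[
k \colon Z \to X, \quad k(u_1) = k(u_2) = u, \quad k(z) = z \text{ for } z \in Z \setminus \{u_1,u_2\}.
\]
I would then verify the adjointness equivalence $h_1(x) \perpe{Z} z$ iff $x \perpe{X} k(z)$ by a four-way case split: $x = u$ or $x \neq u$ in $X$, and $z \in \{u_1,u_2\}$ or not. In the ``unchanged'' case ($x \neq u$ and $z \notin \{u_1,u_2\}$) the equivalence reduces to the definition of $\perpe{Z}$ as a literal copy of $\perpe{X}$; in the two ``mixed'' cases (exactly one of $x$, $z$ involves the doubled point) the equivalence unfolds directly from the second clause of the definition of $\perpe{Z}$ together with $h_1(u) = u_1$; and in the case $x = u$, $z \in \{u_1,u_2\}$ both sides are false, since neither $u \perpe{X} u$ nor $u_1 \perpe{Z} u_j$ holds. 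The same map $k$ witnesses the adjointability of $h_2$ by an identical argument.

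I do not anticipate any real obstacle; the orthogonality relation on $Z$ is engineered precisely so that collapsing $u_1, u_2$ back to $u$ recovers $\perpe{X}$, and this is exactly what makes $k$ an adjoint. The only subtlety worth flagging is that $Z$ is non-irredundant at $\{u_1,u_2\}$ (they share the orthocomplement $\{u\}\ce{X}$ transported into $Z$), so by Lemma~\ref{lem:irredundancy-adjoints-1}(ii) the adjoints of $h_1$ and $h_2$ are not unique; the construction is presumably tailored to exploit this non-uniqueness in a later argument about epimorphisms and monomorphisms in $\OS$.
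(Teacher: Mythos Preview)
Your proposal is correct and follows exactly the paper's approach: the paper simply declares that $Z$ is ``evidently'' an orthoset and then exhibits the same collapsing map $k \colon Z \to X$ (sending $u_1, u_2 \mapsto u$ and fixing everything else) as an adjoint of both $h_1$ and $h_2$. Your write-up just spells out the case checks that the paper leaves implicit.
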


\begin{proof}
Evidently, $Z$ is an orthoset. Note that the difference between $X$ and $Z$ is that the element $u$ of $\class u$ is replaced with two new elements $u_1$ and $u_2$. Thus the map $k \colon Z \to X$ defined by $k(x) = x$ if $x \neq u_1,u_2$ and $k(u_1) = k(u_2) = u$, is an adjoint of both $h_1$ and $h_2$.
\end{proof}

\begin{proposition} \label{lem:monoepi}
Let $f \colon X \to Y$ be a morphism  in $\OS$. Then we have:
\begin{enumerate}[{\rm(i)}]
\item $f$ is a monomorphism in $\OS$\ if and only if $f$ is injective.
\item $f$ is an epimorphism in $\OS$\ if and only if $f$ is surjective.
\end{enumerate}
\end{proposition}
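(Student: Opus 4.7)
The plan is to handle the two equivalences separately. In each, the easy direction is purely set-theoretic: an injection left-cancels post-composition and a surjection right-cancels pre-composition, and the forgetful functor from $\OS$ to sets transmits this. The content lies in the converse directions, where I must manufacture distinguishing test morphisms inside $\OS$.

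For part (i), my plan is to use the orthoset $\One$ introduced before Lemma~\ref{lem:nI}, consisting of the falsity and a single proper element $p$, as a ``point picker''. Suppose $f$ is not injective, and pick distinct $x_1, x_2 \in X$ with $f(x_1) = f(x_2)$. Define $g_i \colon \One \to X$ by $g_i(0) = 0$ and $g_i(p) = x_i$ for $i = 1, 2$. Lemma~\ref{lem:nI}(iii) guarantees that each $g_i$ is adjointable, so both are morphisms of $\OS$. Since $x_1 \neq x_2$ the $g_i$ are distinct, yet $f \circ g_1 = f \circ g_2$ by construction, contradicting monicity.

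For part (ii), my plan is to apply the doubling point construction of Lemma~\ref{lem:doubling}. Suppose $f$ is not surjective and pick $y_0 \in Y \setminus \image f$; since $0 = f(0) \in \image f$ by Lemma~\ref{lem:adjointness-continuity}(i), the element $y_0$ is proper, so the construction applies at $y_0$. Let $Z$ arise from $Y$ by splitting $y_0$ into two fresh elements $u_1, u_2$, and let $h_1, h_2 \colon Y \to Z$ be the two adjointable maps produced by the lemma. They agree off $y_0$ and hence agree on $\image f$, giving $h_1 \circ f = h_2 \circ f$, while $h_1(y_0) = u_1 \neq u_2 = h_2(y_0)$ witnesses $h_1 \neq h_2$, contradicting epicity.

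Neither step presents a serious obstacle once the two preparatory lemmas are available. The only conceptual point is the choice of auxiliary orthoset in each converse: $\One$ serves as a probe that can name any element of $X$ (with adjointability automatic because the domain has only one proper point), while doubling $Y$ at an omitted point produces a minimal target in which two maps out of $Y$ can disagree in a controlled, adjointable way. In particular the proof uses nothing about $X$ and $Y$ beyond the existence of these auxiliary constructions.
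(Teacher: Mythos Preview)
Your proof is correct and follows essentially the same route as the paper: the orthoset $\One$ together with Lemma~\ref{lem:nI}(iii) for part~(i), and the doubling construction of Lemma~\ref{lem:doubling} for part~(ii). Your explicit remark that the omitted point $y_0$ is proper (since $0 = f(0) \in \image f$) is a useful detail that the paper leaves implicit.
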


\begin{proof}
Ad (i): To show the ``only if'' part, assume that $f$ is a monomorphism in $\OS$. Let $x_1, x_2 \in X$ be such that $f(x_1) = f(x_2)$. By Lemma~\ref{lem:nI}(iii), there are morphisms $\widehat{x_1}, \widehat{x_2}\colon \One \to X$ such that $\widehat{x_1}(p) = x_1$ and $\widehat{x_2}(p) = x_2$. Then $f \circ \widehat{x_1} = f \circ \widehat{x_2}$ and hence $\widehat{x_1} = \widehat{x_2}$. We conclude $x_1 = x_2$, that is, $f$ is injective. The ``if'' part is evident.
 
Ad (ii): Assume that $f$ is an epimorphism and that there is a $u \in Y \setminus \image f$. Let $Z = (Y \setminus \{u\}) \cup \{u_1,u_2\}$, where $u_1,u_2$ are new elements, be the orthoset as explained in Lemma~\ref{lem:doubling}, and let $h_1, h_2 \colon Y \to Z$ be the morphisms such that $h_1(y) = h_2(y) = y$ if $y \neq u$, $h_1(u) = u_1$, and $h_2(u) = u_2$. Then $h_1 \circ f = h_2 \circ f$ implies $h_1 = h_2$, a contradiction. This shows the ``only if'' part, and again, the ``if'' part is obvious.
\end{proof}

The next proposition shows that equalisers of certain pairs of morphisms exist in $\OS$. 

\begin{proposition}\label{prop:equal}
Let $f,g \colon X \to Y$  be morphisms  in $\OS$\ such that
\[ X_{f,g} \;=\; \{ x \in X \colon f(x) = g(x)\} \]
is a subspace of $X$, and there is a Sasaki map from $X$ to $X_{f,g}$. Then
\[ \begin{tikzcd}
X_{f,g} \arrow[r,  "\iota"]
& X \arrow[r, yshift=0.7ex,  "f"]
\arrow[r, yshift=-0.7ex,  "g"']
& Y
\end{tikzcd} \]
is an equaliser of the pair $f$, $g$, where $\iota \colon X_{f,g} \to X$ is the inclusion map.
\end{proposition}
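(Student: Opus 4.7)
The plan is to verify the three requirements of an equaliser in a rather direct way, where the hypothesis of a Sasaki map only serves to ensure that the inclusion is a morphism.

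First, I would observe that $f \circ \iota = g \circ \iota$ holds by the very definition of $X_{f,g}$, so the diagram commutes. Second, since $X_{f,g}$ is a subspace of $X$ and the hypothesis supplies a Sasaki map $\sigma \colon X \to X_{f,g}$ onto it, the inclusion $\iota$ is an orthometry with generalised inverse $\sigma$, and in particular $\iota$ is a morphism of $\OS$.

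For the universal property, let $h \colon Z \to X$ be a morphism in $\OS$ satisfying $f \circ h = g \circ h$. Then for every $z \in Z$ we have $f(h(z)) = g(h(z))$, so $h(z) \in X_{f,g}$. Hence $h$ factors set-theoretically as $h = \iota \circ \tilde h$ for the unique map $\tilde h \colon Z \to X_{f,g}$ with $\tilde h(z) = h(z)$. Uniqueness of $\tilde h$ is automatic since $\iota$ is injective. The substantive point — and the only step requiring care — is to check that $\tilde h$ is itself adjointable. Let $k \colon X \to Z$ be an adjoint of $h$; the natural candidate for an adjoint of $\tilde h$ is $k \circ \iota \colon X_{f,g} \to Z$. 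To verify this, take $z \in Z$ and $a \in X_{f,g}$. Since orthogonality on the subspace $X_{f,g}$ is inherited from $X$, we have $\tilde h(z) \perp a$ in $X_{f,g}$ iff $h(z) \perp \iota(a)$ in $X$, and by adjointness of $k$ and $h$ this is equivalent to $z \perp k(\iota(a))$, as required.

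The main obstacle is conceptually minor: it lies in keeping track of which orthogonality relation is meant in which step, and in noticing that the Sasaki map itself is needed only for the single purpose of witnessing the adjointability of $\iota$. Once $\iota$ is known to be a morphism, the rest is the standard universal-property argument combined with the simple observation above that $k \circ \iota$ serves as an adjoint of the factorisation $\tilde h$.
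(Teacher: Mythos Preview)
Your proof is correct. The paper organises the argument slightly differently: instead of defining the factoring map set-theoretically and then verifying by hand that $k \circ \iota$ is an adjoint, it sets $\overline{h} = \sigma \circ h$, which is automatically a morphism of $\OS$ as a composite of morphisms, and then uses $\sigma \circ \iota = \id_{X_{f,g}}$ both to obtain $\iota \circ \overline{h} = h$ (since $\image h \subseteq X_{f,g}$) and to establish uniqueness. The two constructions coincide --- your $\tilde h$ equals $\sigma \circ h$ because $\sigma$ is the identity on $X_{f,g}$, and your proposed adjoint $k \circ \iota$ is exactly an adjoint of $\sigma \circ h$ --- so the difference is purely organisational: your version isolates the role of the Sasaki map to witnessing the adjointability of $\iota$, while the paper's version threads $\sigma$ through the whole argument and thereby sidesteps the explicit adjointability check for the factoring map.
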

\begin{proof}
Evidently, $f\circ \iota = g\circ \iota$. Let $h \colon Z \to X$ be a morphism in $\OS$ such that $f \circ h = g \circ h$. Then $\image h\subseteq X_{f,g}$. Put $\overline{h} = \sigma \circ h \colon Z \to X_{f,g}$, where $\sigma \colon X \to X_{f,g}$ is a Sasaki map. Then we have the following commutative diagram
\[ \begin{tikzcd}
Z\arrow[dr,  "h"]\arrow[swap, d,  "\overline{h}"]& &\\
X_{f,g} \arrow[r,  "\iota"]
& X \arrow[r, yshift=0.7ex,  "f"]
\arrow[r, yshift=-0.7ex,  "g"']
& Y,
\end{tikzcd} \]
as $\iota\circ \overline{h} = \iota \circ \sigma \circ h = h$. Moreover, for any further morphism $k \colon Z \to X_{f,g}$ in $\OS$ such that $\iota \circ k = h$, we have $k = \sigma \circ \iota \circ k = \sigma \circ h = \overline{h}$. 
\end{proof}

To show that Proposition~\ref{prop:equal} cannot be generalised to arbitrary pairs of maps, we use the following example.

\begin{example} \label{ex:Example-square}
Consider the following orthoset $X$; cf. \cite[Example 2.15]{PaVe1}:
\begin{center}
\begin{tikzpicture}[
 every node/.style={draw, circle, minimum size=0.675435cm, inner sep=1pt},
 line/.style={draw}
]

\node (s) at (0,2) {$s$};
\node (t) at (2,2) {$t$};
\node (u) at (4,2) {$u$};
\node (v) at (4,0) {$v$};
\node (w) at (4,-2) {$w$};
\node (x) at (2,-2) {$x$};
\node (y) at (0,-2) {$y$};
\node (z) at (0,0) {$z$};
\node (zero) at (2,0) {$0$};

\draw[line] (s) -- (t);
\draw[line] (t) -- (u);
\draw[line] (u) -- (v);
\draw[line] (v) -- (w);
\draw[line] (w) -- (x);
\draw[line] (x) -- (y);
\draw[line] (y) -- (z);
\draw[line] (z) -- (s);

\draw[line] (z) to[bend right] (zero);
\draw[line] (zero) to[bend left=60] (t);
\draw[line] (zero) to[bend right=40] (v);
\draw[line] (zero) to[bend left] (s);
\draw[line] (zero) to[bend left=10] (u);
\draw[line] (zero) to[bend left] (y);
\draw[line] (zero) to[bend right=10] (w);
\draw[line] (x) to[bend right=10] (zero);
\draw[line] (y) -- (x);
\draw[line] (x) -- (w);

\draw[line] (zero) to[out=0,in=-30,looseness=8] (zero);
\end{tikzpicture}
\end{center}
Here, two elements are orthogonal if they either lie both on a straight line or they are connected by a curved line. For instance, $s$, $t$, and $u$ are mutually orthogonal.

$X$ is not a Dacey space. Indeed, $\{s,0\}$ and $\{s,w,0\}$ are subspaces but there is no subspace $A$ orthogonal to $\{s,0\}$ such that $\{s,0\} \vee A = \{s,w,0\}$.
\end{example}

\begin{proposition} \label{prop:NOS-equalisers}
The category $\OS$ does not have equalisers.
\end{proposition}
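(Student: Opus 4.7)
The plan is to pin down a concrete pair of $\OS$-morphisms for which no equaliser exists, using the non-Dacey orthoset $X$ of Example~\ref{ex:Example-square}. The crucial feature of that example is the subspace $\{s, w, 0\}$, which already witnesses the failure of the Dacey condition there. I will engineer $f, g \colon X \to X$ so that $X_{f,g} = \{s, w, 0\}$ and then show that any hypothetical equaliser is forced into a three-element shape that cannot be equipped with an orthogonality relation making the equaliser map adjointable.

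For the construction, I take $f = \id_X$ and let $g$ be the reflection of the diagram of Example~\ref{ex:Example-square} across the diagonal joining $s$ and $w$. Explicitly, $g$ fixes $s$, $w$, $0$ and swaps $t \leftrightarrow z$, $u \leftrightarrow y$, $v \leftrightarrow x$. A routine check shows that $g$ sends each side of the outer square onto another side, so $g$ is an orthoautomorphism; by Proposition~\ref{prop:unitary-maps}, $g$ is self-adjoint and in particular a morphism of $\OS$. The fixed-point set of $g$ is exactly $\{s, w, 0\}$, so $X_{f,g} = \{s, w, 0\}$.

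Suppose now that an equaliser $(E, e)$ of $f$ and $g$ existed in $\OS$. Since every equaliser is a monomorphism, Proposition~\ref{lem:monoepi}(i) forces $e$ to be injective, and $f \circ e = g \circ e$ yields $\image e \subseteq X_{f,g}$. For the converse inclusion, observe that for each $x \in X_{f,g}$ the map $\One \to X$ sending the proper element to $x$ is adjointable by Lemma~\ref{lem:nI}(iii) and coequalises $f$ and $g$; the universal property then produces an element of $E$ mapping to $x$. Combined with injectivity of $e$, this forces $E$ to consist of exactly three elements, which I label $0$, $\tilde s$, $\tilde w$ with $e(\tilde s) = s$ and $e(\tilde w) = w$.

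The only remaining freedom is whether $\tilde s \perp_E \tilde w$, and I close both possibilities by examining an adjoint $\bar e \colon X \to E$ of $e$. If $\tilde s \perp_E \tilde w$, then $s \not\perp_X s$ and $w \not\perp_X s$ force $\bar e(s)$ to be non-orthogonal in $E$ to both $\tilde s$ and $\tilde w$, which no element of $E$ satisfies. If $\tilde s \not\perp_E \tilde w$, then $s \perp_X t$ forces $\bar e(t) = 0$, while $w \not\perp_X t$ forces $\bar e(t) \neq 0$. Either way a contradiction is reached, so no equaliser of $f$ and $g$ exists. The main work is choosing $(f, g)$ so that the would-be equaliser is pinned down to such a small set; once that is arranged, the case split closing the argument is short.
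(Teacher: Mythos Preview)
Your proof is correct and follows essentially the same approach as the paper: both use the reflection of $X$ across the $s$--$w$ diagonal paired with $\id_X$, pin down the image of a hypothetical equaliser to $\{s,w,0\}$ via the point maps $\One\to X$, and then derive a contradiction from the adjoint of $e$. The only difference is in the endgame: you explicitly enumerate the two possible orthoset structures on the three-element $E$ and rule each out, whereas the paper leaves the structure of $Y$ unspecified and instead shows directly that $e(\tilde e(s))$ is forced to equal both $s$ and $w$ by playing the adjoint against the elements $v$ and $t$ (or $z$). One small slip: where you write that the map $\One\to X$ ``coequalises $f$ and $g$'' you mean that it equalises them, i.e.\ that $f\circ\widehat{x}=g\circ\widehat{x}$.
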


\begin{proof}
Let $X$ be the orthoset from Example~\ref{ex:Example-square} and let $f \colon X \to X \komma 0\mapsto 0, s \mapsto s, \, t \mapsto z, \, u \mapsto y, \, v \mapsto x, \, w \mapsto w, \, x \mapsto v, \, y \mapsto u, \, z \mapsto t$. Then $f$ is an orthoautomorphism of $X$ and hence, by Proposition~\ref{prop:unitary-maps}, a morphism of $\OS$.

Assume, for sake of contradiction, that the pair of arrows
\begin{tikzcd}
X \arrow[r, yshift=0.7ex,  "f"]
\arrow[r, yshift=-0.7ex,  "\id_X"']
& X
\end{tikzcd}
in $\OS$ possesses the equaliser $e \colon Y \to X$. Then $f \circ e = e$ implies that $\image e \subseteq \{s, w, 0\}$. We claim that actually $\image e = \{s, w, 0\}$. Indeed, assume that $s \not\in \image e$. By Lemma~\ref{lem:nI}(iii) there is a unique morphism $\widehat{s} \colon \One \to X \komma p \mapsto s$. Then $f \circ \widehat{s} = \id_X \circ \widehat{s}$, but there is no adjointable map $k \colon \One \to Y$ such that $ \widehat{s} = e \circ k$. Hence $s \in \image e$, and we argue similarly to see that also $w \in \image e$.

Let now $\tilde e$ be an adjoint of $e$. We claim that $e(\tilde e(s)) = s$. Indeed, otherwise $e(\tilde e(s)) \perp v$ and hence $s \perp e(\tilde e(v))$. But then $e(\tilde e(v)) = 0$ and hence $\tilde e(v) = 0$, that is, $v \in \kernel \tilde e = (\image e)\c = \{u,y,0\}$, and the claim follows.

We may similarly argue to conclude that also $e(\tilde e(s)) = w$ holds. Consequently, the pair $f$, $\id_X$ does not possess an equaliser.
\end{proof}

\section{A dagger category of irredundant orthosets}
\label{sec:iOS}

In our final section, we adapt our categorical framework to the case when the orthosets under consideration are irredundant. The morphisms are again assumed to be adjointable maps. Recall that, by Lemma~\ref{lem:irredundancy-adjoints-1}, adjoints of maps between irredundant orthosets are unique. Therefore it now makes sense to equip the category with an additional structure: a functor that assigns to each morphism its unique adjoint.

A {\it dagger} on a category $\C$ is an involutive functor $\adj \colon \C\op \to \C$ that is the identity on objects. More explicitly, the requirements are that
\begin{align*}
& (g \circ f)\adj \;=\; f\adj \circ g\adj \quad\text{for any morphisms $f, g$,} \\
& {\id_A}\adj \;=\; \id_A \quad\text{for any object $A$,} \\
& f\adjadj \;=\; f \quad\text{for any morphism $f$.}
\end{align*}
A category equipped with a dagger is called a {\it dagger category}. We note that this concept has occurred since the 1960s in the literature and was typically considered in some special context. It entered the mainstream discussion about the foundations of quantum mechanics with Abramsky and Coecke's paper \cite{AbCo}. The notion ``dagger category'' was coined by P.\ Selinger \cite{Sel}. The present work presumes that the reader is acquainted with the fundamental concepts and results on dagger categories. For an introduction to this topic, we may refer, e.g., to \cite{HeJa,Jac} or to Heunen and Vicary's monograph \cite{HeVi}.

Let $\iOS$ be the category consisting of all irredundant orthosets and all adjointable maps between them. The unique adjoint of a morphism $f \colon X \to Y$ is denoted by $f\adj \colon Y \to X$. Equipped with $\adj$, $\iOS$ obviously becomes a dagger category.

In the following, we will see that $\iOS$ shares some properties with the category $\OS$ of all orthosets. In particular, we will consider monomorphisms, epimorphisms, equalisers, and coequalisers in $\iOS$. We will show that, again, $\iOS$ has no equalisers or coequalisers in general. We will also use our categorical framework to discuss the relationship between orthosets and ortholattices. The transition from irredundant orthosets to complete ortholattices, which we defined in Section~\ref{sec:Orthosets-with-0}, will take the form of a dagger-preserving functor $\mathsf C \colon \iOS \to \cOL$. We also consider the same assignment between certain subcategories of $\iOS$ and $\cOL$. For instance, $\mathsf C$ establishes a dagger equivalence between the dagger categories of Fr\' echet orthosets and of complete atomistic ortholattices.

\begin{remark}
Let $F$ be the field of real or complex numbers.  Let $\PHil{F}$ be the category whose objects are the orthosets $P(H)$, where $H$ is a Hilbert space over $F$, and whose morphisms are $P(\phi)$, where $\phi$ is a bounded linear map between Hilbert spaces. Again, by Example~\ref{ex:Adjoints-in-Hilbert-spaces}, $\PHil{F}$ is a subcategory of $\iOS$.
\end{remark}

We start our discussion of $\iOS$ by considering monomorphisms and epimorphisms.

\begin{remark}\label{limitscolimits}
In a dagger category, limits are also colimits and conversely: applying $\adj$ to a limit cone yields a colimit cone and vice versa. Similarly, a morphism $f$ in a  dagger category is a monomorphism if and only if $f\adj$  is an epimorphism.
\end{remark}

\begin{proposition} \label{prop:iosmonoepi}
Let $f \colon X \to Y$ be a morphism  in $\iOS$. 
Then we have:
\begin{enumerate}[{\rm(i)}]
\item $f$ is a monomorphism in $\iOS$\ if and only if $f$ is injective.
\item $f$ is an epimorphism in $\iOS$\ if and only if $f\adj$ is injective.
\end{enumerate}
\end{proposition}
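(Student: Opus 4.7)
My plan is to mirror the strategy of Proposition~\ref{lem:monoepi} for part (i), and then exploit the dagger structure of $\iOS$ to get part (ii) almost for free.

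For part (i), the ``if'' direction is immediate: any injective map is a monomorphism in $\mathbf{Set}$ and hence in any subcategory of $\mathbf{Set}$, in particular in $\iOS$. For the ``only if'' direction, I intend to reuse the point-separation trick employed in the proof of Proposition~\ref{lem:monoepi}(i). Note that the orthoset $\One = \{p,0\}$ is irredundant (it has only one proper element), so it is an object of $\iOS$, and Lemma~\ref{lem:nI}(iii) supplies, for any $x \in X$, an adjointable map $\widehat x \colon \One \to X$ with $\widehat x(p) = x$. If $f(x_1) = f(x_2)$, then $f \circ \widehat{x_1} = f \circ \widehat{x_2}$; since $f$ is monic in $\iOS$, we deduce $\widehat{x_1} = \widehat{x_2}$ and hence $x_1 = x_2$.

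For part (ii), the natural approach is to appeal to Remark~\ref{limitscolimits}: in any dagger category, $f$ is an epimorphism if and only if $f\adj$ is a monomorphism. This is immediate from the contravariant involutive functoriality of $\adj$: $g \circ f = h \circ f$ is equivalent to $f\adj \circ g\adj = f\adj \circ h\adj$, and the assignment $g \mapsto g\adj$ is a bijection on hom-sets of the appropriate types. Combining this with part (i) applied to $f\adj \colon Y \to X$ (which is a well-defined morphism of $\iOS$ since $\iOS$ is a dagger category), we obtain that $f$ is an epimorphism if and only if $f\adj$ is injective.

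There is no real obstacle here beyond bookkeeping: the only subtle point is that the doubling-point construction used in Proposition~\ref{lem:monoepi}(ii) to characterise epimorphisms in $\OS$ is unavailable in $\iOS$, since doubling an element produces two $\prl$-equivalent elements and therefore leaves the category $\iOS$. The dagger structure is precisely what lets us sidestep this difficulty, by transporting the epi question back to a mono question on $f\adj$.
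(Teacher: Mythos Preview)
Your proposal is correct and follows essentially the same approach as the paper: for (i) you reuse the $\One$-probing argument from Proposition~\ref{lem:monoepi}(i), noting that $\One$ is irredundant, and for (ii) you invoke the dagger duality of Remark~\ref{limitscolimits} to reduce to part (i) applied to $f\adj$. Your closing remark explaining why the doubling construction of Proposition~\ref{lem:monoepi}(ii) is unavailable in $\iOS$ is a helpful addition not present in the paper's terse proof.
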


\begin{proof}
Ad (i): This can be seen similarly to Proposition~\ref{lem:monoepi}(i). Note that $\One$ is an irredundant orthoset.

Ad (ii): This follows from part (i) and Remark \ref{limitscolimits}.
\end{proof}

\begin{proposition} \label{prop:iosequal}
Let $f,g\colon X\to Y$  be morphisms  in $\iOS$\ such that
\[ X_{f,g} \;=\; \{ x \in X \colon f(x) = g(x)\} \]
is an irredundant subspace of $X$, and there is a Sasaki map $\sigma \colon X \to X_{f,g}$. Then the following hold:
\begin{itemize}

\item[\rm (i)]
$\begin{tikzcd}
X_{f,g} \arrow[r,  "\iota"]
& X \arrow[r, yshift=0.7ex,  "f"]
\arrow[r, yshift=-0.7ex,  "g"']
& Y
\end{tikzcd}$
is an equaliser in $\iOS$\ of the pair $f$, $g$, where $\iota \colon X_{f,g} \to X$ is the inclusion map.

\item[\rm (ii)]
$\begin{tikzcd}
Y \arrow[r, yshift=0.7ex,  "f\adj"]
\arrow[r, yshift=-0.7ex,  "g\adj"']
& X \arrow[r,  "\sigma"]&X_{f,g}
\end{tikzcd}$
is a coequaliser in $\iOS$\ of the pair $f\adj$, $g\adj$.

\end{itemize}
\end{proposition}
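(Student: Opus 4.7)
My plan is to obtain part (i) by mimicking the argument given for Proposition~\ref{prop:equal} in the $\OS$ setting, and then to derive part (ii) almost for free by applying the dagger.

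For (i), I will first note that $X_{f,g}$ is an object of $\iOS$ by hypothesis, and that the inclusion $\iota$ is an adjointable map (with $\sigma$ as an adjoint), so it is an $\iOS$-morphism; in fact $\iota$ is an orthometry by Lemma~\ref{lem:isometric-inclusion}(ii). The equality $f \circ \iota = g \circ \iota$ is built into the definition of $X_{f,g}$. For the universal property, given any $h \colon Z \to X$ in $\iOS$ with $f \circ h = g \circ h$, the image of $h$ lies in $X_{f,g}$, so I would set $\overline h := \sigma \circ h$, check that $\iota \circ \overline h = h$ using $\sigma|_{X_{f,g}} = \id_{X_{f,g}}$ from Lemma~\ref{lem:isometric-inclusion}(iii), and verify uniqueness by the standard computation $k = \sigma \circ \iota \circ k = \sigma \circ h = \overline h$. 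This is exactly the proof of Proposition~\ref{prop:equal}, only now taking place inside $\iOS$ because $X_{f,g}$ is assumed to be irredundant.

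For (ii), the key additional input is that adjoints in $\iOS$ are unique (Lemma~\ref{lem:irredundancy-adjoints-1}(ii)), so that $\iota\adj = \sigma$ on the nose rather than merely up to $\prl$. Applying the dagger functor to the equaliser diagram obtained in (i), I will invoke Remark~\ref{limitscolimits}, which says that limit cones are sent to colimit cones. Since $(f \circ \iota)\adj = \iota\adj \circ f\adj = \sigma \circ f\adj$ and similarly for $g$, the daggered diagram reads $Y \rightrightarrows X \xrightarrow{\sigma} X_{f,g}$, which is therefore a coequaliser of $f\adj$ and $g\adj$ in $\iOS$.

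I do not anticipate any real obstacle: the statement is essentially a packaging of Proposition~\ref{prop:equal} with the dagger structure of $\iOS$. The only point worth emphasising is the identification $\iota\adj = \sigma$, which is what turns the dagger of the inclusion map into the Sasaki map and allows (ii) to be read off from (i) without further work.
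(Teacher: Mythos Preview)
Your proposal is correct and follows essentially the same approach as the paper. The only difference is cosmetic: for part (i) the paper observes that $\iOS$ is a full subcategory of $\OS$ and therefore inherits the equaliser from Proposition~\ref{prop:equal} directly, whereas you rerun that argument inside $\iOS$; for part (ii) both you and the paper invoke Remark~\ref{limitscolimits}, and your explicit identification $\iota\adj = \sigma$ via Lemma~\ref{lem:irredundancy-adjoints-1}(ii) is exactly the detail needed to read off the stated coequaliser.
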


\begin{proof}
Ad (i): As $\iOS$, seen as an ordinary category, is a full subcategory of $\OS$, the assertion follows from Proposition~\ref{prop:equal}.

Ad (ii): This follows from part (i) and Remark \ref{limitscolimits}.
\end{proof}

\begin{proposition} \label{prop:iNOS-equalisers}
The category $\iOS$ does not have equalisers or coequalisers.
\end{proposition}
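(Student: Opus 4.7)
My plan is to reuse the irredundant orthoset $X$ and the involutive orthoautomorphism $f \colon X \to X$ from Example~\ref{ex:Example-square} and the proof of Proposition~\ref{prop:NOS-equalisers}. Direct inspection of the nine subsets $\{s\}\c, \{t\}\c, \ldots, \{z\}\c, \{0\}\c$ shows that they are pairwise distinct, so $X$ is irredundant; moreover $f$ is an orthoautomorphism with fixed-point set $\{s, w, 0\}$, hence a morphism of $\iOS$ by Proposition~\ref{prop:unitary-maps}. I shall show that the pair $(f, \id_X)$ admits no equaliser in $\iOS$. Because $f$ is an involution, Proposition~\ref{prop:unitary-maps} yields $f\adj = f^{-1} = f$, and hence $(f\adj, \id_X\adj) = (f, \id_X)$; so by Remark~\ref{limitscolimits} any coequaliser of $(f, \id_X)$ would dagger to an equaliser of the same pair, reducing the coequaliser case to the equaliser case.

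Suppose for contradiction that $e \colon Y \to X$ is an equaliser of $f$ and $\id_X$ in $\iOS$. The identity $f \circ e = e$ forces $\image e \subseteq \{s, w, 0\}$. For each $a \in \{s, w, 0\}$, the map $\widehat a \colon \One \to X \komma p \mapsto a$ lies in $\iOS$ by Lemma~\ref{lem:nI}(iii) and satisfies $f \circ \widehat a = \widehat a$. The universal property then supplies a unique $k_a \colon \One \to Y$ in $\iOS$ with $e \circ k_a = \widehat a$; since every map out of $\One$ sending $0$ to $0$ is adjointable, the uniqueness of $k_a$ translates directly to $|e^{-1}(a)| = 1$. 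Combined with $\image e \subseteq \{s, w, 0\}$, this pins down $Y = \{y_s, y_w, 0\}$ with $e(y_s) = s$ and $e(y_w) = w$.

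Since $Y$ is irredundant, the two proper elements $y_s$ and $y_w$ must have distinct orthocomplements in $Y$, which forces $y_s \perp y_w$. The adjointness conditions for a putative $e\adj \colon X \to Y$ read $y_s \perp e\adj(x) \Leftrightarrow s \perp x$ and $y_w \perp e\adj(x) \Leftrightarrow w \perp x$. Evaluating both at $x = s$: because $s \notperp s$, the first forces $e\adj(s) \notin \{y_s\}\ce{Y} = \{y_w, 0\}$, whence $e\adj(s) = y_s$; because $s \notperp w$, the second forces $e\adj(s) \notin \{y_w\}\ce{Y} = \{y_s, 0\}$, whence $e\adj(s) = y_w$. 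These are incompatible, contradicting the existence of $e\adj$ and thereby of the equaliser. The main obstacle is the case analysis of this last paragraph; once the universal property has reduced $Y$ to three elements and irredundancy has forced $y_s \perp y_w$, the impossibility of a consistent value for $e\adj(s)$ is a purely combinatorial matter, and the coequaliser statement follows from the reduction in the first paragraph.
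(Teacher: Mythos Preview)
Your proof is correct. The overall strategy matches the paper's: you reuse the orthoset $X$ and the involutive orthoautomorphism $f$ from Example~\ref{ex:Example-square} (noting that $X$ is irredundant), show that $(f,\id_X)$ has no equaliser, and then obtain the coequaliser statement via the dagger as in Remark~\ref{limitscolimits}.

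The contradiction step, however, is organised differently. The paper simply transplants the $\OS$ argument from Proposition~\ref{prop:NOS-equalisers}: it shows $\image e = \{s,w,0\}$, then works with the self-adjoint composite $e \circ e\adj$ and the identity $\kernel e\adj = (\image e)\c = \{u,y,0\}$ to force $e(e\adj(s)) = s$ and $e(e\adj(s)) = w$ simultaneously. Your argument instead exploits a feature specific to $\iOS$: you use the uniqueness clause of the universal property (together with Lemma~\ref{lem:nI}(iii)) to conclude that $Y$ has exactly three elements, and then the irredundancy of $Y$ forces $y_s \perp y_w$, after which the adjointness equations at $x = s$ give incompatible values for $e\adj(s)$. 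This is a genuine alternative: the paper's route is category-agnostic (the same computation works in $\OS$), while yours makes essential use of the irredundancy of the equalising object in $\iOS$ and yields a more explicit description of why the equaliser cannot exist.
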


\begin{proof}
We note that the orthoset $X$ from Example~\ref{ex:Example-square} is irredundant. To see that $\iOS$ does not have equalisers, we may hence argue as in the proof of Proposition~\ref{prop:NOS-equalisers}. The non-existence of coequalisers then follows from Remark~\ref{limitscolimits}.
\end{proof}

With any orthoset $X$, we may associate the ortholattice ${\mathsf C}(X)$. Moreover, a map $f \colon X \to Y$ between orthosets gives rise to the map ${\mathsf C}(f) \colon {\mathsf C}(X) \to {\mathsf C}(Y)$ between the associated ortholattices. We shall now see that these assignments actually define a functor.

Recall that any ortholattice gives rise to the irredundant orthoset $L\supOS$ and, by Remark~\ref{rem:orthosets-and-ortholattices}, $L$ can be recovered from $L\supOS$. We denote by $\cOL$ be the dagger category of all complete ortholattices. Regarding the objects of $\cOL$ as orthosets, we require $\cOL$ to be a full dagger subcategory of $\iOS$. That is, the morphisms of $\cOL$ are the adjointable maps and the dagger is the unique adjoint.

A morphism $f \colon A \to B$ of a dagger category is called a {\it dagger isomorphism} if $f\adj \circ f = \id_A$ and $f \circ f\adj = \id_B$.

\begin{lemma} \label{lem:isos-between-ortholattices}
Let $h \colon L \to M$ be a map between complete ortholattices. Then the following are pairwise equivalent:
\begin{itemize}

\item[\rm (a)] $h$ is an isomorphism of ortholattices.

\item[\rm (b)] $h$, seen as a map between orthosets, is an orthoisomorphism.

\item[\rm (c)] $h$, seen as a map between orthosets, is a dagger isomorphism in $\iOS$.

\end{itemize}
\end{lemma}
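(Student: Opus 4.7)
The plan is to prove the implications cyclically, (a)$\,\Rightarrow\,$(b)$\,\Rightarrow\,$(c)$\,\Rightarrow\,$(a), drawing on the characterisation of orthoisomorphisms from Proposition~\ref{prop:unitary-maps}, the sup-preservation of adjointable maps from Lemma~\ref{lem:adjointable-maps-between-ortholattices}, and the uniqueness of adjoints in irredundant orthosets from Lemma~\ref{lem:irredundancy-adjoints-1}(ii). The only preliminary observation needed is that every complete ortholattice $L$ is irredundant when regarded as an orthoset: for $a, b \in L$ one has $\{a\}\c = \principalideal{a\c}$, so $\{a\}\c = \{b\}\c$ forces $a\c = b\c$ and hence $a = b$. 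Thus both $L$ and $M$ are objects of $\iOS$.

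For (a)$\,\Rightarrow\,$(b), the argument is direct: an ortholattice isomorphism is a bijection preserving $\leq$ and $\c$, and since $x \perp y$ iff $x \leq y\c$, both preservation and reflection of $\perp$ follow immediately. For (b)$\,\Rightarrow\,$(c), Proposition~\ref{prop:unitary-maps} tells us that the orthoisomorphism $h$ has $h^{-1}$ as an adjoint. By irredundancy of $M$ and Lemma~\ref{lem:irredundancy-adjoints-1}(ii), the adjoint $h\adj$ in $\iOS$ is unique, so $h\adj = h^{-1}$. The equations $h\adj \circ h = \id_L$ and $h \circ h\adj = \id_M$ are then automatic, which is exactly the dagger-isomorphism condition.

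For the remaining implication (c)$\,\Rightarrow\,$(a), suppose $h$ is a dagger isomorphism, so $h\adj \circ h = \id_L$ and $h \circ h\adj = \id_M$. Then $h$ is a bijection with $h^{-1} = h\adj$ an adjoint of $h$, so Proposition~\ref{prop:unitary-maps} makes $h$ an orthoisomorphism, in particular $h$ preserves and reflects $\perp$. Lemma~\ref{lem:adjointable-maps-between-ortholattices} applied to both $h$ and $h\adj$ yields that $h$ and $h^{-1}$ are sup-preserving bijections, hence $h$ is a complete lattice isomorphism. To upgrade this to an ortholattice isomorphism, I would compute
\[ h(a\c) \;=\; h\Bigl(\bigvee\{x \in L \colon x \perp a\}\Bigr) \;=\; \bigvee\{h(x) \colon x \perp a\} \;=\; \bigvee\{y \in M \colon y \perp h(a)\} \;=\; h(a)\c, \]
using sup-preservation of $h$, bijectivity, and the fact that $h$ preserves and reflects $\perp$.

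None of the steps involves a genuine obstacle; the main care is conceptual, namely noting that complete ortholattices are irredundant orthosets (so that $\iOS$-morphisms between them have unique adjoints and the notion of dagger isomorphism is unambiguous) and that the identification $h\adj = h^{-1}$ bridges the orthoset viewpoint with the ortholattice viewpoint. Once this is in place, the earlier lemmas on adjointable maps do all the work.
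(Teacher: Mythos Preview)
Your proposal is correct and follows essentially the same route as the paper: both arguments pivot on Proposition~\ref{prop:unitary-maps} to identify orthoisomorphisms with dagger isomorphisms, and on Lemma~\ref{lem:adjointable-maps-between-ortholattices} to obtain order-preservation from adjointability. The paper organises the proof as (a)$\Leftrightarrow$(b) and (b)$\Leftrightarrow$(c) rather than a cycle, and is terser about why an order-preserving orthoisomorphism is an ortholattice isomorphism (your explicit computation of $h(a\c)=h(a)\c$ fills in what the paper leaves implicit), but the substance is the same.
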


\begin{proof}
(a) $\Rightarrow$ (b): Let $h \colon L \to M$ be an isomorphism of ortholattices. Then $h$ is clearly an orthoisomorphism between $L\supOS$ and $M\supOS$.

(b) $\Rightarrow$ (a): Let $h \colon L\supOS \to M\supOS$ be an orthoisomorphism. Then $h$ is adjointable by Proposition~\ref{prop:unitary-maps} and by Lemma~\ref{lem:adjointable-maps-between-ortholattices} order-preserving. It follows that $h$ is an isomorphism of ortholattices.

(b) $\Leftrightarrow$ (c): This equivalence holds by Proposition~\ref{prop:unitary-maps}.
\end{proof}

\begin{theorem} \label{thm:iOS-to-cOL}
$\mathsf C$ is a faithful and dagger essentially surjective dagger-preserving functor from $\iOS$ to $\cOL$.
\end{theorem}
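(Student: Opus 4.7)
The plan is to verify four things in turn: that $\mathsf{C}$ is a well-defined functor from $\iOS$ to $\cOL$, that it preserves the dagger, that it is faithful, and that it is dagger essentially surjective. All four parts reduce to applications of lemmas already established in the paper.

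First I would check functoriality. For any $X \in \iOS$, ${\mathsf C}(X)$ is by construction a complete ortholattice, hence an object of $\cOL$. For a morphism $f \colon X \to Y$ of $\iOS$ with unique adjoint $f\adj$, Lemma~\ref{lem:lattice-adjoint}(i) tells us that ${\mathsf C}(f)$ and ${\mathsf C}(f\adj)$ form an adjoint pair between ${\mathsf C}(X)$ and ${\mathsf C}(Y)$ regarded as orthosets; in particular ${\mathsf C}(f)$ is a morphism of $\cOL$. Preservation of identities is immediate, since ${\mathsf C}(\id_X)(A) = \id_X(A)\cc = A$. For composition, given $f \colon X \to Y$ and $h \colon Y \to Z$, Lemma~\ref{lem:adjointness-continuity}(ii) gives $h(f(A)\cc)\cc = h(f(A))\cc$ for any $A \in {\mathsf C}(X)$, so ${\mathsf C}(h \circ f) = {\mathsf C}(h) \circ {\mathsf C}(f)$.

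Next I would address dagger-preservation and faithfulness together. Any complete ortholattice $L$, viewed as an orthoset, is Fr\'echet: $\{a\}\c = \principalideal{a\c}$, so $\{a\}\c = \{b\}\c$ forces $a = b$. By Lemma~\ref{lem:irredundancy-adjoints-1}(ii) adjoints in $\cOL$ are therefore unique, so the adjoint ${\mathsf C}(f\adj)$ of ${\mathsf C}(f)$ produced by Lemma~\ref{lem:lattice-adjoint}(i) must coincide with ${\mathsf C}(f)\adj$. For faithfulness, suppose ${\mathsf C}(f) = {\mathsf C}(f')$ for $f, f' \colon X \to Y$ in $\iOS$. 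Evaluating both sides at the orthoclosed set $\{x\}\cc$ and invoking Lemma~\ref{lem:adjointness-continuity}(ii) in the form $f(\{x\}\cc)\cc = f(\{x\})\cc = \{f(x)\}\cc$, we obtain $\{f(x)\}\cc = \{f'(x)\}\cc$ for every $x \in X$; applying $\c$ yields $\{f(x)\}\c = \{f'(x)\}\c$, and irredundancy of $Y$ forces $f(x) = f'(x)$.

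Finally, for dagger essential surjectivity, I would send a given $L \in \cOL$ to $X = L\supOS \in \iOS$. By Remark~\ref{rem:orthosets-and-ortholattices}, the map $L \to {\mathsf C}(L\supOS) \komma a \mapsto \principalideal{a}$ is an isomorphism of ortholattices, which by Lemma~\ref{lem:isos-between-ortholattices} is automatically a dagger isomorphism in $\cOL$. No step here is a serious obstacle; the principal matter to keep track of is the interplay between the three notions of adjoint involved (the orthoset adjoint of a map between orthosets, the lattice-theoretic adjoint between complete ortholattices, and the categorical dagger), and to confirm that for maps between complete ortholattices regarded as Fr\'echet orthosets all three coincide. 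This is precisely what Lemmas~\ref{lem:lattice-adjoint} and~\ref{lem:irredundancy-adjoints-1}, together with the Fr\'echet property of $L\supOS$, guarantee.
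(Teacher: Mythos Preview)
Your argument is correct and follows essentially the same route as the paper: functoriality via Lemma~\ref{lem:adjointness-continuity}(ii), dagger-preservation via Lemma~\ref{lem:lattice-adjoint}(i) together with uniqueness of adjoints, faithfulness by evaluating at $\{x\}\cc$, and dagger essential surjectivity via Remark~\ref{rem:orthosets-and-ortholattices} and Lemma~\ref{lem:isos-between-ortholattices}. One terminological slip: $L\supOS$ is \emph{not} Fr\'echet in general (if $0 < a < 1$ in $L$ then $\{1\}\c = \{0\} \subsetneq \principalideal{a\c} = \{a\}\c$ yet $1 \neq a$); what you actually verify, and what Lemma~\ref{lem:irredundancy-adjoints-1}(ii) requires, is merely irredundancy, so the logic stands but you should replace ``Fr\'echet'' by ``irredundant'' throughout.
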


\begin{proof}
Note first that $\mathsf C$ maps $\iOS$ indeed to $\cOL$. For, ${\mathsf C}(X)$ is a complete ortholattice for each $X \in \iOS$. Moreover, if $f \colon X \to Y$ is adjointable, then so is ${\mathsf C}(f) \colon {\mathsf C}(X) \to {\mathsf C}(Y)$ by Lemma~\ref{lem:lattice-adjoint}(i).

Clearly, ${\mathsf C}(\id_X) = \id_{{\mathsf C}(X)}$ for any orthoset $X$ and by Lemma~\ref{lem:adjointness-continuity}(ii), ${\mathsf C}(g \circ f) = {\mathsf C}(g) \circ {\mathsf C}(f)$ for any morphisms $f$ and $g$ of $\OS$. Moreover, ${\mathsf C}(f)\adj = {\mathsf C}(f\adj)$ by Lemma~\ref{lem:lattice-adjoint}(i).

A complete ortholattice $L$ is by Remark~\ref{rem:orthosets-and-ortholattices} isomorphic with ${\mathsf C}(L\supOS)$. By Lemma~\ref{lem:isos-between-ortholattices}, any isomorphism between ortholattices is a dagger isomorphism of $\cOL$. We conclude that $\mathsf C$ is dagger essentially surjective.

Finally, let $f, g \colon X \to Y$ be morphisms of $\iOS$ such that ${\mathsf C}(f) = {\mathsf C}(g)$. For any $x \in X$, we then have that $\{f(x)\}\cc = f(\{x\}\cc)\cc = {\mathsf C}(f)(\{x\}\cc) = {\mathsf C}(g)(\{x\}\cc) = g(\{x\}\cc)\cc = \{g(x)\}\cc$ by Lemma~\ref{lem:adjointness-continuity}(ii). As $Y$ is irredundant, it follows $f = g$. We conclude that $\mathsf C$ is faithful.
\end{proof}

We note that the functor $\mathsf C$ in Theorem \ref{thm:iOS-to-cOL} is not the adjoint of the embedding functor from $\cOL$ to $\iOS$.

\begin{proposition} \label{prop:iOS-to-cOL-adjoints}
The functor $\mathsf C \colon \iOS \to \cOL$ has neither a left nor a right adjoint.
\end{proposition}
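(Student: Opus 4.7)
My plan is to rule out a right adjoint of ${\mathsf C}$ by a cardinality argument on hom-sets; the non-existence of a left adjoint will then follow automatically. Indeed, since ${\mathsf C}$ is dagger-preserving and both $\iOS$ and $\cOL$ are dagger categories, applying the dagger on each side of a natural isomorphism $\homset_{\cOL}({\mathsf C}(X), L) \cong \homset_{\iOS}(X, G(L))$ and using $\mathsf{C}(g)\adj = \mathsf{C}(g\adj)$ yields the natural isomorphism $\homset_{\iOS}(G(L), X) \cong \homset_{\cOL}(L, {\mathsf C}(X))$, so that $G$ would be left adjoint to ${\mathsf C}$ as well. Thus I may suppose ${\mathsf C} \dashv G$ and derive a contradiction.

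First I would probe the adjunction with $X = \One$. By Lemma~\ref{lem:nI}(iii), every map $\One \to Y$ sending $0$ to $0$ is uniquely adjointable, so $\homset_{\iOS}(\One, Y)$ is in natural bijection with the underlying set of $Y$. A direct check gives the analogous description of $\homset_{\cOL}(\mathbf{2}, L)$ as the underlying set of $L$, a morphism being determined by the image of the top element (the adjoint being constructed as in Lemma~\ref{lem:lattice-adjoint}). The adjunction bijection with $L = \mathbf{4}$, the four-element Boolean ortholattice, therefore forces $|G(\mathbf{4})| = 4$. Up to isomorphism, the only four-element irredundant orthosets on three proper elements are $Y_1$, with a single $\perp$-edge, and $Y_2$, with three mutually orthogonal proper elements; the empty and two-edge configurations force two distinct proper elements to share an orthocomplement and so fail irredundancy. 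Hence $G(\mathbf{4})$ is isomorphic to $Y_1$ or $Y_2$.

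Next I would probe the adjunction with $X = X' := \{0, a', b'\}$, $a' \perp b'$, so that ${\mathsf C}(X') \cong \mathbf{4}$, and $L = \mathbf{4}$. Using the adjoint formulae from Lemma~\ref{lem:lattice-adjoint} I would first establish $|\homset_{\cOL}(\mathbf{4}, \mathbf{4})| = 16$: a morphism $h \colon \mathbf{4} \to \mathbf{4}$ in $\cOL$ is uniquely determined by an \emph{arbitrary} pair $(h(p), h(q)) \in \mathbf{4}^2$, since the adjoint $k$ can always be built by a fourfold case split on whether $h(p) \perp y$ and $h(q) \perp y$, with the element $1 \in \mathbf{4}$ absorbing the case in which neither holds. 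On the orthoset side, a map $h \colon X' \to G(\mathbf{4})$ with $h(0) = 0$ admits an adjoint if and only if, for every $y \in G(\mathbf{4})$, at least one of $h(a') \perp y$ or $h(b') \perp y$ holds, because the possible orthocomplements in $X'$ of elements of $X'$ are $X'$, $\{0, b'\}$ and $\{0, a'\}$, never the singleton $\{0\}$. A short count of the pairs $(\alpha, \beta) \in G(\mathbf{4})^2$ failing this criterion at some $y$ yields $9$ if $G(\mathbf{4}) \cong Y_1$ (the failures at the proper element $w$ with $\{w\}\c = \{0\}$ dominate) and $3$ if $G(\mathbf{4}) \cong Y_2$ (one diagonal pair per proper element). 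Hence $|\homset_{\iOS}(X', G(\mathbf{4}))|$ equals $7$ or $13$ respectively, each strictly less than $16$, contradicting the hom-set bijection.

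The main obstacle, I expect, is the counter-intuitive count $|\homset_{\cOL}(\mathbf{4}, \mathbf{4})| = 16$: one's first guess is that an adjointable morphism between ortholattices should preserve orthogonality of atoms, giving only the $9$ orthogonal pairs in $\mathbf{4}^2$. The decisive asymmetry is that the lattice codomain has an ``overflow'' top element $1$ that accepts non-orthogonality from both images, whereas the orthoset $X'$ offers no analogous element in which to land the adjoint. Carefully constructing $k$ on the lattice side to confirm that no orthogonality constraint on $(h(p), h(q))$ is imposed is where the main care is required; once this is in place, the contradiction is immediate.
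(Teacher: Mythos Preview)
Your argument is correct and follows the same overall strategy as the paper: reduce one adjoint case to the other via the dagger, then produce a hom-set cardinality mismatch. The difference lies in the choice of test object. The paper probes with the two-element lattice $\mathbf 2$: since $\homset_{\cOL}(\mathbf 2,{\mathsf C}(\One))$ has exactly two elements, the putative adjoint must send $\mathbf 2$ to $\One$ on the nose, and then a single comparison against the three-element orthoset ${\mathsf B}(A)$ (for $A$ the four-element Boolean algebra) yields the mismatch $3\neq 4$. Your probe with $\mathbf 4$ only pins down $|G(\mathbf 4)|=4$, which forces the two-case classification of four-element irredundant orthosets and the larger counts $\{7,13\}$ versus~$16$. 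Both routes work; the paper's use of $\mathbf 2$ is simply more economical, avoiding the case split and the sixteen-element hom-set computation, whereas your route has the merit of making explicit the structural reason for the failure (the ``overflow'' element $1$ in the lattice that the orthoset $X'$ lacks).

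One minor expository point: your opening dagger paragraph literally establishes ``$G$ right adjoint $\Rightarrow$ $G$ left adjoint'', whose contrapositive is ``no left $\Rightarrow$ no right''---the opposite of what you need. The implication you actually want (``no right $\Rightarrow$ no left'') follows by the symmetric dagger manipulation applied to a putative left adjoint $F$; this is immediate, but it would be cleaner to state that direction explicitly.
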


\begin{proof}
Assume that ${\mathsf C}$ has the left adjoint $F \colon \cOL\to \iOS$. Then there is for all objects $L\in \cOL$ and $X\in \iOS$ a bijection between the homsets $\homset_{\iOS}(FL,X)$ and $\homset_{\cOL}(L,{\mathsf C}(X))$.

Let ${\mathsf 2} = \{0,1\}$ be the ortholattice with two elements and recall that $\One$ is the orthoset containing a single proper element $p$. As ${\mathsf C}(\One)$ is isomorphic to $\mathsf 2$, $\homset_{\cOL}({\mathsf 2}, {\mathsf C}(\One))$ contains precisely two elements: the zero map and the unique isomorphism between ${\mathsf C}(\One)$ and ${\mathsf 2}$. Therefore $\homset_{\iOS}(F{\mathsf 2}, \One)$ has two elements and as $\iOS$ is a dagger category, so has $\homset_{\iOS}(\One, F{\mathsf 2})$. It follows from Lemma \ref{lem:nI}(iii) that $F{\mathsf 2}$ has likewise two elements, that is, $F{\mathsf 2}$ is orthoisomorphic to $\One$.

Let now $A$ be a $4$-element Boolean algebra. Then ${\mathsf B}(A)$ is a $3$-element Fr\' echet orthoset and $A$ is isomorphic to ${\mathsf C}({\mathsf B}(A))$. Moreover, $\card \homset_{\iOS}(F{\mathsf 2}, {\mathsf B}(A)) = \card {\mathsf B}(A) = 3$ and $\card \homset_{\cOL}({\mathsf 2}, {\mathsf C}({\mathsf B}(A))) = \card \homset_{\cOL}({\mathsf 2}, A) = \card A = 4$. We conclude that there cannot be a bijection between the homsets $\homset_{\iOS}(F{\mathsf 2}, {\mathsf B}(A))$ and $\homset_{\cOL}({\mathsf 2}, {\mathsf C}({\mathsf B}(A)))$.

Assume now that ${\mathsf C}$ has the right adjoint $G \colon \cOL\to \iOS$. Then there is for all objects $L \in \cOL$ and $X \in \iOS$ a bijection between $\homset_{\iOS}(X, GL)$ and $\homset_{\cOL}({\mathsf C}(X),L)$. As both $\iOS$ and $\cOL$ are dagger categories, $\homset_{\iOS}(GL,X)$ and $\homset_{\cOL}(L, {\mathsf C}(X))$ have equal cardinality for all $X$ and $L$. But as above we see that $G{\mathsf 2}$ is orthoisomorphic to $\One$ and hence $\homset_{\iOS}(G{\mathsf 2}, {\mathsf B}(A))$ and $\homset_{\cOL}({\mathsf 2}, {\mathsf C}({\mathsf B}(A)))$ have distinct cardinalities.
\end{proof}

We may reduce our categories with the effect of achieving fullness of the functor between them. Let $\FOS$ be the dagger category of Fr\' echet orthosets. Moreover, let $\caOL$ be the dagger category whose objects are the complete atomistic ortholattices and whose morphisms are the adjointable maps $f \colon L \to M$ between them with the additional property that both $f$ and $f\adj$ send basic elements to basic elements.

\begin{theorem} \label{thm:FOS-acOL}
$\mathsf C$ is a dagger-preserving functor from $\FOS$ to $\caOL$. In fact, $\mathsf C$ establishes a dagger equivalence between $\FOS$ and $\caOL$.
\end{theorem}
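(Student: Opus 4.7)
The plan is to verify in turn that (i) $\mathsf C$ restricts to a dagger-preserving functor $\FOS \to \caOL$, (ii) this restriction is full, and (iii) it is dagger essentially surjective. Together with the faithfulness already established in Theorem~\ref{thm:iOS-to-cOL}, this will yield the asserted dagger equivalence.

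For (i), let $X \in \FOS$. By Proposition~\ref{prop:Frechet-orthosets}, ${\mathsf C}(X)$ is a complete atomistic ortholattice, whose basic elements are $\{0\}$ and the atoms $\{x,0\}$ for $x \in X\withoutzero$. For a morphism $f \colon X \to Y$ in $\FOS$, Theorem~\ref{thm:iOS-to-cOL} supplies that ${\mathsf C}(f)$ is adjointable with adjoint ${\mathsf C}(f\adj)$, so it suffices to verify that both ${\mathsf C}(f)$ and ${\mathsf C}(f\adj)$ send basic elements to basic elements. Since $Y$ is Fr\'echet, $\{f(x),0\}$ is orthoclosed by Lemma~\ref{lem:Frechet}(e), whence ${\mathsf C}(f)(\{x,0\}) = f(\{x,0\})\cc = \{f(x),0\}\cc = \{f(x),0\}$, which is basic in ${\mathsf C}(Y)$; the same argument applies to $f\adj$. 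As both $\FOS$ and $\caOL$ are already dagger categories, the dagger-preservation property is inherited from Theorem~\ref{thm:iOS-to-cOL}. Assertion (iii) is also immediate: for any $L \in \caOL$, the ortholattice isomorphism $L \to {\mathsf C}(\mathsf{B}(L))$ provided by Proposition~\ref{prop:Frechet-orthosets} is a dagger isomorphism by Lemma~\ref{lem:isos-between-ortholattices}, and $\mathsf{B}(L) \in \FOS$.

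The substantive step is fullness. Let $h \colon {\mathsf C}(X) \to {\mathsf C}(Y)$ be a morphism in $\caOL$, with adjoint $h\adj$. Since $h$ preserves basic elements and $Y$ is Fr\'echet, for each proper $x \in X$ one has either $h(\{x,0\}) = \{0\}$ or $h(\{x,0\}) = \{y,0\}$ for a \emph{unique} proper $y \in Y$; this defines a map $f \colon X \to Y$ with $f(0) = 0$, and dually one defines $g \colon Y \to X$ from $h\adj$. To see that $g$ is an adjoint of $f$, for proper $x \in X$ and $y \in Y$ the chain $f(x) \perp y$ iff $\{f(x),0\} \perp \{y,0\}$ iff $h(\{x,0\}) \perp \{y,0\}$ iff $\{x,0\} \perp h\adj(\{y,0\})$ iff $\{x,0\} \perp \{g(y),0\}$ iff $x \perp g(y)$ goes through, with the degenerate cases $f(x) = 0$ or $g(y) = 0$ absorbed into the fact that $\{0\}$ is orthogonal to every element. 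Finally ${\mathsf C}(f) = h$: both maps are sup-preserving (Lemma~\ref{lem:adjointable-maps-between-ortholattices}) on the atomistic lattice ${\mathsf C}(X)$ and agree on the join-dense set of atoms, since by construction ${\mathsf C}(f)(\{x,0\}) = \{f(x),0\} = h(\{x,0\})$.

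The principal obstacle is the fullness step, and within it the bookkeeping around the zero element: one must exploit the Fr\'echet property of $Y$ to extract a literal element $f(x) \in Y$ from the basic set $h(\{x,0\})$ rather than merely an equivalence class, and to ensure that the chain of orthogonality equivalences above closes so that ${\mathsf C}(f)$ equals $h$ on the nose and not just up to the equivalence $\prl$. Once this is done, faithfulness and dagger-preservation are inherited from Theorem~\ref{thm:iOS-to-cOL}, and the equivalence follows by the standard characterisation.
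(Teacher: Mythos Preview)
Your proof is correct and follows essentially the same route as the paper's: both establish that $\mathsf C$ restricts to a dagger-preserving functor by checking that basic elements go to basic elements via $\mathsf C(f)(\{x,0\}) = \{f(x),0\}$, obtain dagger essential surjectivity from Proposition~\ref{prop:Frechet-orthosets} and Lemma~\ref{lem:isos-between-ortholattices}, and prove fullness by reading off $f$ and $g$ from the action of $h$ and $h\adj$ on basic elements, then comparing the two sup-preserving maps $h$ and $\mathsf C(f)$ on the join-dense set of atoms. Your explicit chain of equivalences for adjointness and your handling of the zero cases are a bit more detailed than the paper's presentation, but the argument is the same; the paper closes by citing \cite[Lemma~5.1]{Vic} for the ``standard characterisation'' you invoke.
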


\begin{proof}
By Proposition~\ref{prop:Frechet-orthosets}, ${\mathsf C}(X) \in \caOL$ for any $X \in \FOS$. Let $f \colon X \to Y$ be a morphism of $\FOS$. The basic elements of ${\mathsf C}(X)$ are the sets $\{x,0\}$, $x \in X$, and similarly for ${\mathsf C}(Y)$. Since ${\mathsf C}(f)(\{x,0\}) = \{f(x),0\}$ for any $x \in X$, it follows that ${\mathsf C}(f)$ sends basic elements to basic elements. In view of Theorem~\ref{thm:iOS-to-cOL}, it is now clear that $\mathsf C$ is a faithful dagger-preserving functor from $\FOS$ to $\caOL$.

Let $L \in \caOL$. By Proposition~\ref{prop:Frechet-orthosets}, ${\mathsf B}(L)$ is a Fr\' echet orthoset and $L$ is isomorphic with ${\mathsf C}({\mathsf B}(L))$. From Lemma~\ref{lem:isos-between-ortholattices} it follows again that $\mathsf C$ is dagger essentially surjective.

It remains to show that $\mathsf C$ is full. The assertion will then follow by \cite[Lemma~5.1]{Vic}. Let $X$ and $Y$ be Fr\' echet orthosets and $h \colon {\mathsf C}(X) \to {\mathsf C}(Y)$ a morphism of $\caOL$. By assumption, there is a map $f \colon X \to Y$ such that $h(\{x,0\}) = \{f(x),0\}$ for any $x \in X$, and a map $g \colon Y \to X$ such that $h\adj(\{y,0\}) = \{g(y),0\}$ for any $y \in Y$. We observe that $f$ is adjointable, having the adjoint $g$. Moreover, $h$ coincides with ${\mathsf C}(f)$ on the set of basic elements. But $h$ is sup-preserving by Lemma~\ref{lem:adjointable-maps-between-ortholattices} and so is ${\mathsf C}(f)$ by Lemma~\ref{lem:lattice-adjoint}(ii). Hence $h = {\mathsf C}(f)$.
\end{proof}

We observe that in this case, $\mathsf C$, understood as in Theorem \ref{thm:FOS-acOL} as a functor from $\FOS$ to $\caOL$, possesses an adjoint. The difference to the situation in Proposition~\ref{prop:iOS-to-cOL-adjoints} is that the morphisms are restricted to those sending basic elements to basic elements.

\begin{proposition} \label{prop:FOS-to-caOL-adjoints}
The functor ${\mathsf C} \colon \FOS \to \caOL$ has a left and a right adjoint.
\end{proposition}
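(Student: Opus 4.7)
The plan is to exploit the fact, established in Theorem~\ref{thm:FOS-acOL}, that $\mathsf C$ is a dagger equivalence between $\FOS$ and $\caOL$. Any equivalence of categories has a quasi-inverse, and that quasi-inverse serves as both a left and a right adjoint to the original functor. So the task reduces to constructing an explicit quasi-inverse $\mathsf B \colon \caOL \to \FOS$ and verifying that the unit and counit are natural isomorphisms.

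First I would define $\mathsf B$ on objects by $L \mapsto {\mathsf B}(L)$, which is a Fr\' echet orthoset by Proposition~\ref{prop:Frechet-orthosets}. On a morphism $h \colon L \to M$ in $\caOL$, I would set ${\mathsf B}(h) \colon {\mathsf B}(L) \to {\mathsf B}(M)$ to be the map sending each basic element $p$ to $h(p)$; this is well defined precisely because morphisms of $\caOL$ send basic elements to basic elements. Functoriality of $\mathsf B$ is immediate from the corresponding property of $h$, and ${\mathsf B}(h)$ is adjointable in $\FOS$ with adjoint ${\mathsf B}(h\adj)$, which one checks using that $(p,q) \mapsto (p \leq h\adj(q))$ coincides with $(p \leq h\adj(q)) = (h(p) \leq q)$ and translates into orthogonality via $p \perp q \Leftrightarrow p \nleq q\c$. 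Hence $\mathsf B$ is a dagger-preserving functor.

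Second, I would exhibit the natural isomorphisms. The counit $\epsilon_L \colon {\mathsf C}({\mathsf B}(L)) \to L$ is the ortholattice isomorphism of Proposition~\ref{prop:Frechet-orthosets}, $\{ p \in {\mathsf B}(L) \colon p \leq a \} \mapsto a$ inverted; by Lemma~\ref{lem:isos-between-ortholattices} it is a dagger isomorphism in $\caOL$, and naturality in $L$ follows from the explicit description of morphisms of $\caOL$ on basic elements. The unit $\eta_X \colon X \to {\mathsf B}({\mathsf C}(X))$, $x \mapsto \{x,0\}$, is the orthoisomorphism of Proposition~\ref{prop:Frechet-orthosets}; it is adjointable with adjoint its inverse by Proposition~\ref{prop:unitary-maps}, hence a dagger isomorphism in $\FOS$, and naturality is routine.

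Finally, from the existence of such unit and counit natural isomorphisms one concludes that $\mathsf B \dashv \mathsf C$ as well as $\mathsf C \dashv \mathsf B$: every equivalence is simultaneously a left and a right adjoint to its quasi-inverse, the triangle identities collapsing to statements about isomorphisms. The only mildly delicate point, which I would treat with care, is checking that ${\mathsf B}(h)$ really is adjointable between the Fr\' echet orthosets and that $\mathsf B$ preserves the dagger; once this is in place, the adjunction identities follow from the naturality and inverse relations of $\eta$ and $\epsilon$, and the result is proved.
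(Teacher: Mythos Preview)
Your approach is essentially the paper's: the paper's proof is the single line ``This is a consequence of Theorem~\ref{thm:FOS-acOL}'', and you are simply unpacking that consequence by exhibiting the quasi-inverse $\mathsf B$ and invoking the standard fact that an equivalence is both a left and a right adjoint to its quasi-inverse. One small slip to fix: in your justification of the adjointability of ${\mathsf B}(h)$ you write $p \perp q \Leftrightarrow p \nleq q\c$, but in an ortholattice $p \perp q$ means $p \leq q\c$; the cleanest argument is simply that $h$ and $h\adj$ are already an adjoint pair of orthoset maps between $L\supOS$ and $M\supOS$, so the condition $h(p) \perp q \Leftrightarrow p \perp h\adj(q)$ restricts verbatim to basic elements.
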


\begin{proof}
This is a consequence of Theorem~\ref{thm:FOS-acOL}.
\end{proof}

We finally consider the effect of the functor $\mathsf C$ on Dacey spaces. We are led to a category that is closely related to the category of orthomodular lattices studied in \cite{Jac,BPL}.

Let $\iDS$ be the full dagger subcategory of $\iOS$ consisting of all irredundant Dacey spaces. Moreover, let $\cOML$ be the dagger category consisting of complete orthomodular lattices and adjointable maps. 

\begin{lemma} \label{lem:inclusion-maps-in-OML-as-OS}
Let $L$ be a complete orthomodular lattice. Then for any subspace $A$ of $L\supOS$, the inclusion map $\iota \colon A \to L\supOS$ is adjointable.
\end{lemma}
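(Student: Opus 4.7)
The plan is to exhibit an explicit adjoint to $\iota$, namely the classical Sasaki projection, and to verify the adjointness criterion via Lemma~\ref{lem:isometric-inclusion}.

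Since $L$ is a complete ortholattice, Proposition~\ref{prop:orthosets-and-ortholattices} together with Remark~\ref{rem:orthosets-and-ortholattices} identifies $L$ with ${\mathsf C}(L\supOS)$ by means of $a \mapsto \principalideal a = \{y \in L \colon y \leq a\}$. In particular, every subspace $A$ of $L\supOS$ is of the form $A = \principalideal a$ for a uniquely determined $a \in L$, and one checks that under this identification $A\c$ corresponds to $a\c$, while $\{x\}\cc$ corresponds to $x$ itself (because $\{x\}\c = \principalideal{x\c}$ and $(\principalideal{x\c})\c = \principalideal x$).

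With this in hand, I would define
\[ \sigma \colon L \to A \komma x \mapsto (x \vee a\c) \wedge a, \]
the Sasaki projection onto $a$. Clearly $\sigma(x) \leq a$, so $\sigma$ indeed lands in $A$. To show that $\sigma$ is an adjoint of $\iota$, I would verify condition (c) of Lemma~\ref{lem:isometric-inclusion}(i), which under the identification with $L$ asks that
\[ a\c \vee x \;=\; a\c \vee \sigma(x) \]
for all $x \in L$. Since $a\c \leq a\c \vee x$, orthomodularity gives $a\c \vee x = a\c \vee \bigl((a\c \vee x) \wedge a\bigr) = a\c \vee \sigma(x)$, which is exactly what is needed.

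There is no genuine obstacle beyond invoking orthomodularity once; the argument hinges on the fact that orthomodularity of $L$ is precisely what makes $\sigma(x) = (x \vee a\c) \wedge a$ behave as an adjoint to the inclusion $\principalideal a \hookrightarrow L$. One might alternatively verify criterion (b) of Lemma~\ref{lem:isometric-inclusion}(i) directly, by computing $\sigma(x)\c = (x\c \wedge a) \vee a\c$ and noting that for $c := x\c \wedge a \leq a$ orthomodularity yields $a \wedge (c \vee a\c) = c = a \wedge x\c$; this shows $\{b \in A \colon b \perp x\} = \{b \in A \colon b \perp \sigma(x)\}$ equally directly.
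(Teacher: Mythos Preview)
Your proposal is correct and follows essentially the same approach as the paper: both use the Sasaki projection $\sigma(x) = (x \vee a\c) \wedge a$ as the adjoint of the inclusion $\iota \colon \principalideal a \to L$. The paper simply asserts that ``we readily check'' the adjointness condition, whereas you spell this out via criterion~(c) (and alternatively~(b)) of Lemma~\ref{lem:isometric-inclusion}(i), invoking orthomodularity explicitly at the key step.
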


\begin{proof}
Let $a \in L$ be such that $A = \principalideal a$. Let $\sigma \colon L \to A \komma x \mapsto (x \vee a\c) \wedge a$. Then we readily check that, for any $x \in A$ and $y \in L$, we have $x \perp y$ if and only if $x \perp \sigma(y)$. We conclude that $\sigma$ is an adjoint of $\iota$.
\end{proof}

\begin{theorem} \label{thm:DS-cOML}
$\mathsf C$ is a faithful and dagger essentially surjective dagger-preserving functor from $\iDS$ to $\cOML$.

If for every subspace $A$ of an orthoset $X \in \iOS$ the inclusion map is an $\iOS$-morphism, then $X$ belongs to $\iDS$. Any complete orthomodular lattice is of the form ${\mathsf C}(X)$ for such an orthoset.
\end{theorem}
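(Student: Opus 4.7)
My plan is to derive the theorem almost entirely from the machinery already in place. For the first assertion, I would split the statement of the functor into the three required properties. That $\mathsf C$ carries $\iDS$ into $\cOML$ is immediate from the very definition of a Dacey space, since for $X \in \iDS$ the ortholattice ${\mathsf C}(X)$ is complete and orthomodular; the morphism-level statement is inherited from Theorem~\ref{thm:iOS-to-cOL}, because $\iDS$ is full in $\iOS$ and $\cOML$ is full in $\cOL$. Faithfulness and dagger-preservation then descend from Theorem~\ref{thm:iOS-to-cOL} without further work.

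The only sub-claim that requires an argument is dagger essential surjectivity. Given $L \in \cOML$, I would take $L\supOS$ as in Proposition~\ref{prop:orthosets-and-ortholattices} and verify that it lies in $\iDS$. Irredundancy is a one-line check: in $L\supOS$ we have $\{a\}\c = \principalideal{a\c}$, so $\{a\}\c = \{b\}\c$ forces $a\c = b\c$ and hence $a = b$ in $L$. By Remark~\ref{rem:orthosets-and-ortholattices}, $\iota_L \colon L \to {\mathsf C}(L\supOS)$ is an ortholattice isomorphism, so ${\mathsf C}(L\supOS)$ is orthomodular, i.e.\ $L\supOS$ is Dacey. Finally, Lemma~\ref{lem:isos-between-ortholattices} promotes $\iota_L$ to a dagger isomorphism in $\cOML$, closing this part.

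For the second part, the first sentence is a direct application of Theorem~\ref{thm:adjoints-and-Dacey}(i): the hypothesis gives adjointable inclusions, which forces $X$ to be Dacey, and combined with the standing irredundancy assumption this places $X$ in $\iDS$. For the concluding sentence, I would again take $X = L\supOS$, which the previous paragraph already places in $\iDS$. Since $L$ is complete, Remark~\ref{rem:orthosets-and-ortholattices} identifies ${\mathsf C}(L\supOS)$ with $L$ itself, so every subspace of $L\supOS$ has the form $\principalideal a$ for some $a \in L$. Lemma~\ref{lem:inclusion-maps-in-OML-as-OS} supplies an adjoint (the Sasaki projection $x \mapsto (x \vee a\c) \wedge a$) for each such inclusion, and Lemma~\ref{lem:subspaces-of-Dacey-spaces}(iv) guarantees that each $\principalideal a$ is itself irredundant, so each inclusion is an $\iOS$-morphism as required.

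I do not anticipate a real obstacle: the theorem is essentially a synthesis of Theorems~\ref{thm:iOS-to-cOL} and~\ref{thm:adjoints-and-Dacey}, Lemmas~\ref{lem:inclusion-maps-in-OML-as-OS} and~\ref{lem:subspaces-of-Dacey-spaces}(iv), and Remark~\ref{rem:orthosets-and-ortholattices}. The only mildly delicate point is keeping straight the two roles of $L$: as the ambient complete orthomodular lattice whose orthomodularity is being assumed, and as the ortholattice recovered from ${\mathsf C}(L\supOS)$; once the identification via $\iota_L$ is fixed, everything else is routine.
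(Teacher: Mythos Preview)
Your proposal is correct and follows essentially the same route as the paper's proof, which simply cites Theorem~\ref{thm:iOS-to-cOL}, Remark~\ref{rem:orthosets-and-ortholattices}, Theorem~\ref{thm:adjoints-and-Dacey}, and Lemma~\ref{lem:inclusion-maps-in-OML-as-OS}. You merely spell out steps the paper leaves implicit: the one-line irredundancy check for $L\supOS$, the invocation of Lemma~\ref{lem:isos-between-ortholattices} to promote $\iota_L$ to a dagger isomorphism, and the use of Lemma~\ref{lem:subspaces-of-Dacey-spaces}(iv) to ensure that each subspace $\principalideal a$ is itself an object of $\iOS$ so that the inclusion is genuinely an $\iOS$-morphism rather than just an adjointable map---a point the paper's proof glosses over.
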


\begin{proof}
The first part is clear from Theorem~\ref{thm:iOS-to-cOL} and Remark~\ref{rem:orthosets-and-ortholattices}.

By Theorem~\ref{thm:adjoints-and-Dacey}, an orthoset such that all inclusion maps of its subspaces are adjointable, is a Dacey space. Moreover, by Remark~\ref{rem:orthosets-and-ortholattices}, a complete orthomodular lattice $L$ is isomorphic with ${\mathsf C}(L\supOS)$ and by Lemma~\ref{lem:inclusion-maps-in-OML-as-OS}, the inclusion maps of subspaces of $L\supOS$ are adjointable.
\end{proof}

\subsubsection*{Acknowledgements}

This research was funded in part by the Austrian Science Fund (FWF) 10.55776/ PIN5424624 and the Czech Science Foundation (GA\v CR) 25-20013L.

The authors are indebted to the reviewer for the helpful comments.

\end{document}